\documentclass[11pt,a4paper]{article}
\usepackage[T1]{fontenc}
\usepackage[utf8]{inputenc}
\usepackage{lmodern,amsmath,amssymb,amsthm,mathrsfs}
\usepackage[a4paper,margin=27mm]{geometry}
\usepackage{microtype,booktabs,array}
\usepackage[compress]{cite}
\usepackage[colorlinks=true,linkcolor=blue,citecolor=blue,urlcolor=blue]{hyperref}
\hypersetup{pdftitle={Extended O-Operators on Lie–Yamaguti Algebras},pdfauthor={Wen Teng}}
\newtheorem{theorem}{Theorem}[section]
\newtheorem{proposition}[theorem]{Proposition}
\newtheorem{lemma}[theorem]{Lemma}
\newtheorem{corollary}[theorem]{Corollary}
\theoremstyle{definition}
\newtheorem{definition}[theorem]{Definition}
\newtheorem{example}[theorem]{Example}
\theoremstyle{remark}
\newtheorem{remark}[theorem]{Remark}
\numberwithin{equation}{section}
\DeclareMathOperator{\Hom}{Hom}
\DeclareMathOperator{\End}{End}
\DeclareMathOperator{\Der}{Der}
\DeclareMathOperator{\Aut}{Aut}
\DeclareMathOperator{\id}{id}
\DeclareMathOperator{\im}{im}
\DeclareMathOperator{\pr}{pr}
\DeclareMathOperator{\Gr}{Gr}
\DeclareMathOperator{\Sh}{Sh}
\DeclareMathOperator{\sgn}{sgn}
\DeclareMathOperator{\spanof}{span}
\newcommand{\K}{\mathbb K}
\newcommand{\g}{\mathfrak g}
\newcommand{\h}{\mathfrak h}
\newcommand{\cC}{\mathcal C}
\newcommand{\cyc}{\mathop{\sum\limits_{\mathrm{cyc}}}}
\newcommand{\tri}[3]{\{#1,#2,#3\}}
\newcommand{\Tri}[3]{\{#1,#2,#3\}_T}
\newcommand{\Ob}{\operatorname{Ob}}
\allowdisplaybreaks[2]
\title{Extended $\mathcal O$-operators on Lie–Yamaguti Algebras}
\author{Wen Teng\\[4pt]
\small School of Mathematics and Statistics,\\
\small Guizhou University of Finance and Economics,\\
\small Guiyang 550025, P. R. of China\\
\small Email: \href{mailto:tengwen@mail.gufe.edu.cn}{tengwen@mail.gufe.edu.cn}}
\date{}
\begin{document}
\maketitle

\begin{abstract}
We study balanced extended $\mathcal O$-operators on Lie--Yamaguti
algebras. The construction links operator equations, algebra splittings
and Yang--Baxter equations. We build a double from a representation
and a balanced equivariant map. The graphs of extended
$\mathcal O$-operators are exactly its subalgebras complementary to
the first summand. We define extended post-Lie--Yamaguti algebras
and prove their two-way relation with extended $\mathcal O$-operators.
The tensor form gives a balanced extended Lie--Yamaguti Yang--Baxter
equation. We construct a curved $L_\infty[1]$-algebra whose
Maurer--Cartan elements are extended $\mathcal O$-operators.
The normal representation of a graph gives a cohomology theory
for infinitesimal deformations, rigidity and obstructions.
We include both cyclic conditions in the lowest paired degree.
We also map operator infinitesimals to the deformation cohomology
of the induced algebra. Examples give obstructed infinitesimals,
rigid scalar operators and nontrivial formal deformations.
\end{abstract}

\noindent\textbf{Keywords:} Lie--Yamaguti algebra; extended
$\mathcal O$-operator; extended post-Lie--Yamaguti algebra;
Yang--Baxter equation; cohomology; deformation.

\medskip
\noindent\textbf{2020 Mathematics Subject Classification:}
17A30, 17B38, 17B56.

\section{Introduction}\label{sec:intro}

A Lie--Yamaguti algebra has a skew binary product and a ternary
product skew in its first two arguments. They satisfy four identities
\cite{Yamaguti1958,Yamaguti1969}. These identities link the binary
Jacobi sum to the ternary cyclic sum and relate the two products.
Setting the ternary product to zero gives Lie algebras. Setting the
binary product to zero gives Lie triple systems. Thus one can study
binary and ternary operators in the same setting. For further work,
see \cite{Basdouri2025,Das2026,MondalSaha,QiaoXuZhao2025,ShengZhao2022,Takahashi,WenTNEL2026,ZhaoQiaoYB2023}
and the references therein.

$\mathcal O$-operators give an operator form of the classical
Yang--Baxter equation \cite{Kupershmidt1999}. Bai, Guo and Ni
introduced extended $\mathcal O$-operators on Lie algebras. They
related them to generalized Yang--Baxter equations, double Lie
algebras and post-Lie algebras \cite{BaiGuoNi2010,BaiGuoNi2011}.
An extra equivariant map measures the defect of the usual operator
equation. Yu and Hong studied extended $\mathcal O$-operators,
post-Novikov algebras and their tensor equations \cite{YuHong2026}.
Extended $\mathcal O$-operators have also been studied on Lie triple
systems, 3-Lie algebras and perm algebras
\cite{WenTLTS2026,WenT3L2026,ZhangLin2026}. We study extended
$\mathcal O$-operators, their induced products and their tensor
equations on Lie--Yamaguti algebras.

Relative Rota--Baxter operators on Lie--Yamaguti algebras give
pre-Lie--Yamaguti algebras \cite{ShengZhao2022}. The classical
Lie--Yamaguti Yang--Baxter equation and Lie--Yamaguti bialgebras
were introduced in \cite{ZhaoQiaoYB2023}. Post-Lie--Yamaguti
algebras and relative Rota--Baxter operators of nonzero weight
were studied in \cite{QiaoXuZhao2025}. Modified Rota--Baxter
Lie--Yamaguti algebras and their deformations were studied in
\cite{Basdouri2025}. The binary and ternary products are linked
by the Lie--Yamaguti identities. Thus an extended binary operator
equation needs compatible ternary terms. We first determine these
terms and the products they induce.

We also study deformations with the algebra, representation,
extension map and mass fixed. A varying graph must stay closed
under both products of the double. We ask which infinitesimal
solutions extend to formal families and which come from
automorphisms of the fixed data. The cohomology must include all
four Lie--Yamaguti identities. In the lowest paired degree, this
requires both cyclic conditions.

Let $\g$ be a Lie--Yamaguti algebra, $(V;\rho,\mu)$ a representation
and $S:V\to\g$ a balanced equivariant map. We construct a
Lie--Yamaguti algebra $E_{\kappa,S}$ on $\g\oplus V$ using
$\K[\varepsilon]/(\varepsilon^2+\kappa)$. The construction also
works when $S$ is not injective. Extended $\mathcal O$-operators
are maps whose graphs are subalgebras of this double. Each gives
a Lie--Yamaguti structure on $V$. We define extended
post-Lie--Yamaguti algebras and construct them from extended
$\mathcal O$-operators. We also give a canonical operator for every
such algebra. The two mixed ternary products involving $T$ and $S$
are explicit. For the adjoint representation and $S=\id$, the
equations are those of \cite{Basdouri2025}.
Section~\ref{sec:operators} compares this construction with
nonzero-weight post-Lie--Yamaguti algebras.

For the coadjoint representation, we write extended
$\mathcal O$-operators in tensor form. A skew tensor gives the
operator. An admissible symmetric tensor gives its balanced
extension. The extended $\mathcal O$-operator equations are
equivalent to two extended Lie--Yamaguti Yang--Baxter equations.
At zero mass, these are the classical skew equations of
\cite{ZhaoQiaoYB2023}. On a quadratic Lie--Yamaguti algebra, the
solutions correspond to skew modified operators and maximal
isotropic graph subalgebras. We also give a complex for skew
deformations. We state separately the extra mixed ternary
conditions needed for a Lie--Yamaguti Manin triple.

We then study the Maurer--Cartan equation, cohomology and
deformations of extended $\mathcal O$-operators. For derived
brackets and relative Rota--Baxter cohomology, see
\cite{ZhaoQiao2023,ZhaoQiao2024}. For deformation maps of
Lie--Yamaguti complements, see \cite{Das2026}. Higher derived
brackets \cite{Voronov2005} give a curved $L_\infty[1]$-algebra.
Its Maurer--Cartan elements are extended $\mathcal O$-operators.
After fixing a solution, the unary bracket gives the paired
Yamaguti differential for the normal representation of its graph.
We include both cyclic coboundaries, as in \cite{Goswami2025}.
Operator obstructions satisfy both cyclic conditions. We prove
criteria for infinitesimal equivalence, rigidity and extension.
We also map operator infinitesimals to the full deformation
cohomology of the induced Lie--Yamaguti algebra.

Examples give a nonzero degenerate tensor solution, an
infinitesimal with a nonzero second-order obstruction, rigid
scalar operators and a nonconstant family with a nonzero
infinitesimal cohomology class.

Section~\ref{sec:operators} defines balanced extensions and extended
$\mathcal O$-operators, and proves the double and graph results.
Section~\ref{sec:post} defines extended post-Lie--Yamaguti algebras
and proves the two-way operator construction.
Section~\ref{sec:yang-baxter} treats tensor equations, quadratic
forms and skew deformations. Section~\ref{sec:cohomology} gives
the Maurer--Cartan equation and cohomology.
Section~\ref{sec:deformations} treats infinitesimals, rigidity,
obstructions and deformations of the induced algebra.

Throughout, $\K$ is a field of characteristic zero. Tensor
constructions use finite-dimensional spaces. The other algebraic
results also hold in infinite dimensions. Unless stated otherwise,
the algebra, representation, extension map and mass are fixed.

\section{Extended $\mathcal O$-operators and graph subalgebras}
\label{sec:operators}

We define balanced equivariant maps and extended
$\mathcal O$-operators. We construct a double and show that
extended $\mathcal O$-operators correspond to graph subalgebras
complementary to its first summand.

We first recall Lie--Yamaguti algebras and their representations.

\begin{definition}\label{def:LY}
A Lie--Yamaguti algebra is a vector space $\g$ with maps
$[\cdot,\cdot]:\wedge^2\g\to\g$ and
$\tri{\cdot}{\cdot}{\cdot}:\wedge^2\g\otimes\g\to\g$
such that, for all $x,y,z,a,b\in\g$,
\begin{gather}
 \cyc_{x,y,z}\bigl([[x,y],z]+\tri{x}{y}{z}\bigr)=0,
 \label{eq:LY1}\\
 \tri{[x,y]}{z}{a}+\tri{[y,z]}{x}{a}+\tri{[z,x]}{y}{a}=0,
 \label{eq:LY2}\\
 \tri{x}{y}{[z,a]}=[\tri{x}{y}{z},a]+[z,\tri{x}{y}{a}],
 \label{eq:LY3}\\
 \tri{x}{y}{\tri{z}{a}{b}}
 =\tri{\tri{x}{y}{z}}{a}{b}
   +\tri{z}{\tri{x}{y}{a}}{b}
   +\tri{z}{a}{\tri{x}{y}{b}}.
 \label{eq:LY4}
\end{gather}
A cyclic sum cycles the three indicated arguments together.
\end{definition}

\begin{definition}\label{def:rep}
A representation of $\g$ on $V$ is a pair of maps
$\rho:\g\to\End(V)$ and $\mu:\g\otimes\g\to\End(V)$.
Set
\begin{equation}\label{eq:D}
 D(x,y)=\mu(y,x)-\mu(x,y)+[\rho(x),\rho(y)]-\rho([x,y]).
\end{equation}
They must satisfy
\begin{align}
 \mu([x,y],z)&=\mu(x,z)\rho(y)-\mu(y,z)\rho(x),\label{eq:rep1}\\
 \mu(x,[y,z])&=\rho(y)\mu(x,z)-\rho(z)\mu(x,y),\label{eq:rep2}\\
 \rho(\tri{x}{y}{z})&=[D(x,y),\rho(z)],\label{eq:rep3}\\
 \mu(x,\tri{y}{z}{a})
 &=\mu(z,a)\mu(x,y)-\mu(y,a)\mu(x,z)+D(y,z)\mu(x,a),
 \label{eq:rep4}\\
 [D(x,y),\mu(z,a)]
 &=\mu(\tri{x}{y}{z},a)+\mu(z,\tri{x}{y}{a}).\label{eq:rep5}
\end{align}
\end{definition}

Equivalently, these products make $\g\oplus V$ a Lie--Yamaguti
algebra \cite{ShengZhao2022}:
\begin{align}
 [(x,u),(y,v)]_0&=([x,y],\rho(x)v-\rho(y)u),\label{eq:semi2}\\
 \tri{(x,u)}{(y,v)}{(z,w)}_0
 &=\bigl(\tri{x}{y}{z},\mu(y,z)u-\mu(x,z)v+D(x,y)w\bigr).
 \label{eq:semi3}
\end{align}
The adjoint representation is
\begin{equation}\label{eq:ad}
 \rho(x)u=[x,u],\qquad \mu(x,y)u=\tri{u}{x}{y},
 \qquad D(x,y)u=\tri{x}{y}{u}.
\end{equation}
The last equality uses \eqref{eq:LY1}. A cyclic identity for the
ternary product alone does not imply it.

\begin{definition}\label{def:balanced}
A linear map $S:V\to\g$ is balanced equivariant if
\begin{align}
 S\rho(x)u&=[x,Su],& S\mu(x,y)u&=\tri{Su}{x}{y},
 \label{eq:eqv}\\
 \rho(Su)v&=-\rho(Sv)u,\label{eq:bal2}\\
 \mu(Su,x)v&=-\mu(Sv,x)u,\label{eq:bal31}\\
 \mu(x,Su)v&=-D(x,Sv)u\label{eq:bal32}
\end{align}
for all $x,y\in\g$ and $u,v\in V$.
\end{definition}

The zero map is balanced equivariant. So is the inclusion of
an ideal with its restricted adjoint representation. In particular,
\eqref{eq:ad} with $S=\id$ is an example. Balance must hold in $V$.
If $S$ is not injective, checking balance only after applying it
is not enough.

\begin{lemma}\label{lem:background}
For a balanced equivariant $S$, set
\begin{equation}\label{eq:bc}
 b(u,v)=\rho(Su)v,\qquad c(u,v,w)=\mu(Sv,Sw)u.
\end{equation}
Then $(V,b,c)$ is a Lie--Yamaguti algebra and
\begin{align}
 Sb(u,v)&=[Su,Sv],& Sc(u,v,w)&=\tri{Su}{Sv}{Sw},\label{eq:bc-morphism}\\
 SD(x,y)u&=\tri{x}{y}{Su},&
 c(u,v,w)&=-\mu(Su,Sw)v=D(Su,Sv)w.\label{eq:Dlift}
\end{align}
Also, $\im S$ is an ideal of $\g$, and $\ker S$ is central in
$(V,b,c)$ in every argument of both products.
\end{lemma}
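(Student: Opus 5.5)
The plan is to first prove the morphism identities \eqref{eq:bc-morphism} and \eqref{eq:Dlift}. Then I would use them, together with the balance conditions, to verify the four identities \eqref{eq:LY1}--\eqref{eq:LY4} for $(V,b,c)$. Throughout I would work only with identities that hold in $V$, never after applying $S$, since $S$ need not be injective.

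\emph{Morphism identities.} The first part of \eqref{eq:bc-morphism} is immediate: by \eqref{eq:eqv}, $Sb(u,v)=S\rho(Su)v=[Su,Sv]$. Likewise $Sc(u,v,w)=S\mu(Sv,Sw)u=\tri{Su}{Sv}{Sw}$.

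For $SD(x,y)u$, I would expand \eqref{eq:D} and apply \eqref{eq:eqv} term by term. This gives
\[
SD(x,y)u=\tri{Su}{y}{x}-\tri{Su}{x}{y}+[x,[y,Su]]-[y,[x,Su]]-[[x,y],Su].
\]
By \eqref{eq:LY1} and skew-symmetry, this equals $\tri{x}{y}{Su}$, exactly as in the adjoint case \eqref{eq:ad}.

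The equality $c(u,v,w)=-\mu(Su,Sw)v$ is \eqref{eq:bal31} with $x=Sw$. For $c=D(Su,Sv)w$, I would use \eqref{eq:bal32} with $x=Su$, which gives $\mu(Su,Sv)w=-D(Su,Sw)v$. This relates values with permuted slots, so one more step is needed. I would expand $D(Su,Sv)w$ via \eqref{eq:D}. The terms $\mu(Sv,Su)w-\mu(Su,Sv)w$ I would rewrite using \eqref{eq:bal31}. The terms $[\rho(Su),\rho(Sv)]w-\rho([Su,Sv])w$ I would rewrite using \eqref{eq:bal2} and $[Su,Sv]=S\rho(Su)v$. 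Combining these should reduce $D(Su,Sv)w$ to $\mu(Sv,Sw)u$. I expect this to need \eqref{eq:rep1} or \eqref{eq:rep2} evaluated at $S$-images. This step is the main bookkeeping obstacle.

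\emph{Skew-symmetry and the ideal and center statements.} Skew-symmetry of $b$ is \eqref{eq:bal2}, and skew-symmetry of $c$ in its first two arguments is \eqref{eq:bal31}. That $\im S$ is an ideal follows directly from \eqref{eq:eqv}: $[x,Su]=S\rho(x)u$ and $\tri{Su}{x}{y}=S\mu(x,y)u$. The other slots are handled by skewness and by $\tri{x}{y}{Su}=SD(x,y)u$.

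Centrality of $\ker S$ then comes from the formulas. If $Su=0$, then $b(u,v)=0$ and $b(v,u)=-b(u,v)=0$. For $c$, I would use the three expressions $c(u,v,w)=\mu(Sv,Sw)u=-\mu(Su,Sw)v=D(Su,Sv)w$. Each one shows that $c$ vanishes when the corresponding argument lies in $\ker S$.

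\emph{The Lie--Yamaguti identities.} The key observation is that $b$, and $c$ in suitable slots, are obtained by letting $S$-images act through $\rho$, $\mu$ or $D$. I would check each identity as follows.
\begin{itemize}
\item \eqref{eq:LY3}: write $c(u,v,b(w,a))=D(Su,Sv)\rho(Sw)a$. Apply \eqref{eq:rep3} to commute $D$ past $\rho$, then use $\tri{Su}{Sv}{Sw}=Sc(u,v,w)$.
\item \eqref{eq:LY4}: this is \eqref{eq:rep5} (or the derivation property of $D$) with all arguments being $S$-images.
\item \eqref{eq:LY2}: write $c([u,v],w,a)=\mu(Sw,Sa)\rho(Su)v$-type terms using the $-\mu(Su,Sw)v$ form, then apply \eqref{eq:rep1} with $Sb(u,v)=[Su,Sv]$.
\item \eqref{eq:LY1}: write $b(b(u,v),w)=-\rho(Sw)\rho(Su)v$, so the cyclic sum combines with the $D(Su,Sv)w$ form of $c$ and the definition \eqref{eq:D}. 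This leaves a sum of $\mu$-terms that cancel by \eqref{eq:bal31}.
\end{itemize}
I expect \eqref{eq:LY1} and the identity $c=D(Su,Sv)w$ to be the delicate parts, since they require careful use of balance inside $V$.
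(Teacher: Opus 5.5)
Your outline follows the paper's proof: prove the morphism identities, then rewrite each identity as $\rho,\mu,D$ of $S$-images acting on one vector of $V$. Your steps for \eqref{eq:LY2}, \eqref{eq:LY3}, \eqref{eq:LY4}, the ideal and $\ker S$ work once $c(u,v,w)=D(Su,Sv)w$ is known. For \eqref{eq:LY2}, \eqref{eq:rep1} and the two $\mu$-forms of $c$ give $c(b(u,v),w,a)=-c(b(v,w),u,a)-c(b(w,u),v,a)$ directly; the paper uses $D([Su,Sv],Sw)a$ and $\sum_{\mathrm{cyc}}D([x,y],z)=0$ instead.

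\textbf{The \eqref{eq:LY1} step fails as you describe it.} Take your $b(b(u,v),w)=-\rho(Sw)\rho(Su)v=\rho([Su,Sv])w$ and write every $c$-term as $D(Su,Sv)w$. The $\rho([Su,Sv])w$ terms cancel, and what remains is
\[
 \sum_{\mathrm{cyc}}\bigl([\rho(Su),\rho(Sv)]w+\mu(Sv,Su)w-\mu(Su,Sv)w\bigr).
\]
By \eqref{eq:bal31}, $\sum_{\mathrm{cyc}}\mu(Sv,Su)w=-\sum_{\mathrm{cyc}}\mu(Su,Sv)w$. So the $\mu$-terms double to $-2\sum_{\mathrm{cyc}}\mu(Su,Sv)w$ rather than cancel. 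By \eqref{eq:bal2}, the commutators give $2\sum_{\mathrm{cyc}}\rho(Su)\rho(Sv)w$. Neither part is zero in general. For the adjoint data with $S=\id$ they are $-2\sum_{\mathrm{cyc}}\tri{u}{v}{w}$ and $-2\sum_{\mathrm{cyc}}[[u,v],w]$, both nonzero in Example~\ref{ex:nonLie}.

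So your route still needs $\sum_{\mathrm{cyc}}\rho(Su)\rho(Sv)w=\sum_{\mathrm{cyc}}\mu(Su,Sv)w$ in $V$. After applying $S$ this is just \eqref{eq:LY1} in $\g$, but you cannot cancel $S$. The paper avoids this by choosing representatives that put the same vector $u$ in the module slot of all six terms:
\begin{align*}
 b(b(u,v),w)&=\rho(Sw)\rho(Sv)u, & c(u,v,w)&=\mu(Sv,Sw)u,\\
 b(b(v,w),u)&=\rho([Sv,Sw])u, & c(v,w,u)&=D(Sv,Sw)u,\\
 b(b(w,u),v)&=-\rho(Sv)\rho(Sw)u, & c(w,u,v)&=-\mu(Sw,Sv)u.
\end{align*}
The sum is then $-D(Sv,Sw)u+D(Sv,Sw)u=0$ by \eqref{eq:D}.

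\textbf{The identity $c(u,v,w)=D(Su,Sv)w$ is left unproved.} Your \eqref{eq:LY1} repair, \eqref{eq:LY3} and \eqref{eq:LY4} all use it. It needs no expansion of $D$ and no \eqref{eq:rep1}--\eqref{eq:rep2}. It is \eqref{eq:bal32} at $x=Sv$:
\[
 \mu(Sv,Sw)u=-D(Sv,Su)w=D(Su,Sv)w,
\]
since $D$ is skew by \eqref{eq:D}. Your own substitution $x=Su$ also finishes after one use of \eqref{eq:bal31}. Since $\mu(Su,Sv)w=-\mu(Sw,Sv)u$, it reads $\mu(Sw,Sv)u=D(Su,Sw)v$, which is the claim with $v$ and $w$ interchanged. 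The identity is exactly the case $x\in\im S$ of \eqref{eq:bal32}, so \eqref{eq:bal32} is the tool to use here.
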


\begin{proof}
For $u,v,w\in V$, equivariance gives $Sb(u,v)=[Su,Sv]$ and
$Sc(u,v,w)=\tri{Su}{Sv}{Sw}$. By \eqref{eq:D} and \eqref{eq:LY1},
\begin{align*}
 SD(x,y)u
 &=\tri{Su}{y}{x}-\tri{Su}{x}{y}
   +[x,[y,Su]]-[y,[x,Su]]-[[x,y],Su]\\
 &=\tri{x}{y}{Su}.
\end{align*}
Balance also gives
\[
 \rho(Su)v=-\rho(Sv)u,\qquad
 \mu(Sv,Sw)u=-\mu(Su,Sw)v=D(Su,Sv)w.
\]
Thus both products have the required skew-symmetries.

We check the four Lie--Yamaguti identities on $V$.
For the first, balance moves $u$ to the module output:
\begin{align*}
 &\cyc_{u,v,w}\bigl(b(b(u,v),w)+c(u,v,w)\bigr)\\
 &=\rho(Sw)\rho(Sv)u+\rho([Sv,Sw])u-\rho(Sv)\rho(Sw)u\\
 &\quad+\mu(Sv,Sw)u+D(Sv,Sw)u-\mu(Sw,Sv)u=0.
\end{align*}
The last equality follows from the definition of $D(Sv,Sw)$.
The semidirect product for $(V;\rho,\mu)$ satisfies the second
and fourth Lie--Yamaguti identities. With only the last input in
$V$, they give
\begin{align*}
 \cyc_{x,y,z}D([x,y],z)&=0,\\
 [D(x,y),D(z,a)]
   &=D(\tri{x}{y}{z},a)+D(z,\tri{x}{y}{a}).
\end{align*}
Hence
\[
 \cyc_{u,v,w}c(b(u,v),w,a)
   =\cyc_{u,v,w}D([Su,Sv],Sw)a=0.
\]
For the third identity, \eqref{eq:rep3} implies
\begin{align*}
 c(u,v,b(w,z))
 &=D(Su,Sv)\rho(Sw)z\\
 &=\rho(\tri{Su}{Sv}{Sw})z+\rho(Sw)D(Su,Sv)z\\
 &=b(c(u,v,w),z)+b(w,c(u,v,z)).
\end{align*}
The commutator identity for $D$ gives
\begin{align*}
 c(u,v,c(w,z,a))
 &=D(Su,Sv)D(Sw,Sz)a\\
 &=D(\tri{Su}{Sv}{Sw},Sz)a
     +D(Sw,\tri{Su}{Sv}{Sz})a+D(Sw,Sz)D(Su,Sv)a\\
 &=c(c(u,v,w),z,a)+c(w,c(u,v,z),a)+c(w,z,c(u,v,a)).
\end{align*}
All four identities hold in $V$ before applying $S$.

The equalities $[x,Su]=S\rho(x)u$,
$\tri{Su}{x}{y}=S\mu(x,y)u$ and $\tri{x}{y}{Su}=SD(x,y)u$
show that $\im S$ is an ideal. If $Sk=0$, then
$b(k,u)=\rho(Sk)u=0$ and, by balance, $b(u,k)=0$.
Also, $c(k,u,v)=D(Sk,Su)v=0$, $c(u,k,v)=D(Su,Sk)v=0$
and $c(u,v,k)=\mu(Sv,Sk)u=0$. Thus $\ker S$ is central
in every argument.
\end{proof}

\begin{theorem}\label{thm:double}
For $\kappa\in\K$, the vector space $E_{\kappa,S}=\g\oplus V$
is a Lie--Yamaguti algebra with operations
\begin{align}
 [(x,u),(y,v)]_{\kappa,S}
 &=\bigl([x,y]-\kappa[Su,Sv],\rho(x)v-\rho(y)u\bigr),
 \label{eq:double2}\\
 \tri{(x,u)}{(y,v)}{(z,w)}_{\kappa,S}
 &=\Bigl(\tri{x}{y}{z}
       -\kappa\bigl(\tri{x}{Sv}{Sw}+\tri{Su}{y}{Sw}
                                +\tri{Su}{Sv}{z}\bigr),\nonumber\\
 &\hspace{2em}\mu(y,z)u-\mu(x,z)v+D(x,y)w
                  -\kappa c(u,v,w)\Bigr).\label{eq:double3}
\end{align}
The first summand $\g\oplus0$ is a subalgebra. The map
\begin{equation}\label{eq:current-map}
 (x,u)\longmapsto x\otimes1+Su\otimes\varepsilon
 \quad\text{into}\quad
 \g\otimes\K[\varepsilon]/(\varepsilon^2+\kappa)
\end{equation}
is a Lie--Yamaguti homomorphism.
\end{theorem}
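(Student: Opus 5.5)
Write $E=E_{\kappa,S}$ and view it as a deformation of the semidirect product \eqref{eq:semi2}--\eqref{eq:semi3} by the $\kappa$-terms. The two skew-symmetries are immediate. In \eqref{eq:double2}, $[Su,Sv]$ is skew, and the $V$-component is the semidirect one. In \eqref{eq:double3}, swapping $(x,u)\leftrightarrow(y,v)$ exchanges $\tri{x}{Sv}{Sw}$ and $\tri{Su}{y}{Sw}$ up to sign and negates $\tri{Su}{Sv}{z}$. It also negates $c(u,v,w)=D(Su,Sv)w$ by \eqref{eq:Dlift}.

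For the four identities I will split each one by multidegree in the $V$-arguments and use the splitting $\kappa^0+\kappa^1+\kappa^2$. The $\kappa^0$ parts are exactly the semidirect product identities, so they hold. The $\kappa$-terms only involve $S$ applied to $V$-entries. I will push $S$ through every $V$-component output using \eqref{eq:eqv}, \eqref{eq:bc-morphism} and \eqref{eq:Dlift}. Each $\kappa$-correction in the $\g$-component then becomes the same expression evaluated in $\g$ with $Su$ in place of $u$. In the $V$-component, the correction $-\kappa c(u,v,w)$ and the module actions of $-\kappa[Su,Sv]$ and of $-\kappa\tri{\cdot}{\cdot}{\cdot}$-terms must be rewritten using balance, \eqref{eq:bal2}--\eqref{eq:bal32}. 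The purpose is to move all $V$-inputs except one into $S$-arguments. After that, each $\kappa^1$ or $\kappa^2$ group collapses to an instance of \eqref{eq:LY1}--\eqref{eq:LY4} in $\g$, or to \eqref{eq:rep1}--\eqref{eq:rep5}, or to the $D$-identities from the proof of Lemma~\ref{lem:background}, with arguments from $\g\cup\im S$.

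A cleaner organising principle, which I will try first, uses the current map \eqref{eq:current-map}. The algebra $\g\otimes A$ with $A=\K[\varepsilon]/(\varepsilon^2+\kappa)$ commutative is a Lie--Yamaguti algebra, via $[x\otimes a,y\otimes b]=[x,y]\otimes ab$ and $\{x\otimes a,y\otimes b,z\otimes c\}=\tri{x}{y}{z}\otimes abc$. Since $\varepsilon^2=-\kappa$ and $\varepsilon^3=-\kappa\varepsilon$, expanding $\phi(x,u)=x\otimes1+Su\otimes\varepsilon$ reproduces the $\g$-components of \eqref{eq:double2}--\eqref{eq:double3} on the $1$-coefficient. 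The $\varepsilon$-coefficient gives $[x,Sv]-[y,Su]$ and $\tri{x}{y}{Sw}+\tri{x}{Sv}{z}+\tri{Su}{y}{z}-\kappa\tri{Su}{Sv}{Sw}$. By equivariance and \eqref{eq:Dlift}, these equal $S$ of the $V$-components. So $\phi$ is a homomorphism, which proves the last claim. It also shows that every identity holds in $E$ after applying $\phi$. When $S$ is injective, this finishes the proof.

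The main obstacle is non-injective $S$. Here $\phi$ only shows that each identity's defect lies in $0\oplus\ker S$. I will compute the $V$-component of each defect directly. The $\kappa^0$ part vanishes by the representation axioms. For the $\kappa$-parts, I will show that every term has the form (operator built from $\rho,\mu,D$ at elements of $\g+\im S$) applied to a single $V$-vector, using the balance relations to achieve this. The resulting identities are then exactly the semidirect-product identities with one argument in $V$, applied to elements of $\g$ that include the $Su$'s, so they vanish in $V$ rather than merely modulo $\ker S$. The most delicate case is \eqref{eq:LY4}: the $c$-terms there must be converted with \eqref{eq:bal31}, \eqref{eq:bal32} and \eqref{eq:rep4}, \eqref{eq:rep5}. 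This is the step I expect to take the most care. Finally, $\g\oplus0$ is closed under both products, since all $\kappa$-terms and $V$-components vanish when $u=v=w=0$.
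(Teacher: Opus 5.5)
Your proposal is correct and follows essentially the paper's argument: the current map $\phi$ (equivalently, the paper's substitution $u\mapsto Su$ when an even number of inputs lie in $V$) disposes of the $\g$-valued parts, and for the $V$-valued parts you, like the paper, use the balance relations to keep a single $V$-input and reduce to the semidirect-product identities, which is exactly what is needed when $\ker S\neq 0$. Proving that $\phi$ is a homomorphism first, as you do, only reorders the same steps.
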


\begin{proof}
Balance gives the skew-symmetries. By multilinearity, it is
enough to check the other identities on inputs in the two
summands $\g$ and $V$.

First compare the terms with values in $V$.
For the binary product of $Su,Sv$, the two lifts agree:
\[
 \rho(Su)v=-\rho(Sv)u.
\]
For a ternary product with two inputs in $\im S$, the three
placements give
\begin{align*}
 \mu(Sv,x)u&=-\mu(Su,x)v,\\
 \mu(x,Sv)u&=D(Su,x)v,\\
 -\mu(x,Su)v&=D(x,Sv)u.
\end{align*}
For three such inputs,
$\mu(Sv,Sw)u=-\mu(Su,Sw)v=D(Su,Sv)w$.
Equivariance moves $S$ through each inner product by
\eqref{eq:bc-morphism} and \eqref{eq:Dlift}. Thus a chosen
$V$-input stays in $V$ through a nested expression.
These equalities hold before applying $S$, including when
$\ker S\ne0$.

Take one of \eqref{eq:LY1}--\eqref{eq:LY4} with exactly $m$
inputs in $V$. A product with an even number of $V$-inputs
lies in $\g$. One with an odd number lies in $V$.
The scalar factor is $(-\kappa)^{\lfloor m/2\rfloor}$.
For nested products, the inner and outer factors give the same
power. Each pair of remaining odd inputs adds a factor
$-\kappa$. Thus all terms have the same factor.
This also holds in \eqref{eq:LY1}, since both types of terms
have the same three inputs.

If $m$ is even, replace each $V$-input $u$ by $Su$.
The remaining expression is an identity in $\g$, so it is zero.
If $m$ is odd, keep one $V$-input and apply $S$ to the others.
The lift equalities turn the expression into the semidirect
product identity with one $V$-input. It is again zero.
This uses neither division by $\kappa$ nor cancellation of $S$.
For example, the first identity on three $V$-inputs is
\[
 -\kappa\cyc_{u,v,w}
   \bigl(\rho([Su,Sv])w+c(u,v,w)\bigr)=0,
\]
which follows from Lemma~\ref{lem:background}.

The formulas show that the first summand is a subalgebra.
For \eqref{eq:current-map}, multiply $x+Su\varepsilon$,
$y+Sv\varepsilon$ and $z+Sw\varepsilon$ using
$\varepsilon^2=-\kappa$ and $\varepsilon^3=-\kappa\varepsilon$.
By equivariance, the coefficient of $\varepsilon$ is $S$ applied
to the second component of \eqref{eq:double2} or \eqref{eq:double3}.
The constant term is the first component. Thus the map preserves
both products.
\end{proof}

\begin{definition}\label{def:operator}
A linear map $T:V\to\g$ is a balanced extended
$\mathcal O$-operator with extension $S$ and mass $\kappa$ if
\begin{equation}
 [Tu,Tv]-T\bigl(\rho(Tu)v-\rho(Tv)u\bigr)
 =\kappa[Su,Sv].
 \label{eq:operator2}
\end{equation}
\begin{equation}
\begin{aligned}
 &\tri{Tu}{Tv}{Tw}
 -T\bigl(\mu(Tv,Tw)u-\mu(Tu,Tw)v+D(Tu,Tv)w\bigr)
 \\
 &\qquad=\kappa\Bigl(\tri{Tu}{Sv}{Sw}+\tri{Su}{Tv}{Sw}
   +\tri{Su}{Sv}{Tw}-Tc(u,v,w)\Bigr).
\end{aligned}
 \label{eq:operator3}
\end{equation}
Below, ``extended $\mathcal O$-operator'' has this meaning.
\end{definition}

\begin{theorem}\label{thm:graph}
A linear map $T$ is an extended $\mathcal O$-operator exactly when
$\Gr(T)=\{(Tu,u):u\in V\}$ is a Lie--Yamaguti subalgebra of
$E_{\kappa,S}$. Then the products
\begin{align}
 [u,v]_T&=\rho(Tu)v-\rho(Tv)u,\label{eq:desc2}\\
 \Tri{u}{v}{w}
 &=\mu(Tv,Tw)u-\mu(Tu,Tw)v+D(Tu,Tv)w-\kappa c(u,v,w)
 \label{eq:desc3}
\end{align}
make $V$ a Lie--Yamaguti algebra $V_T$.
The map $j_T:V_T\to E_{\kappa,S}$, $j_T(u)=(Tu,u)$,
is an isomorphism onto the graph.
\end{theorem}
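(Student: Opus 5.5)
The plan is to compute the two products of $E_{\kappa,S}$ on graph elements directly and to compare components. Take $u,v,w\in V$. By \eqref{eq:double2},
\[
 [(Tu,u),(Tv,v)]_{\kappa,S}
 =\bigl([Tu,Tv]-\kappa[Su,Sv],\ \rho(Tu)v-\rho(Tv)u\bigr).
\]
The second component is $[u,v]_T$ from \eqref{eq:desc2}. An element $(x,y)$ lies in $\Gr(T)$ exactly when $x=Ty$. So this bracket lies in the graph if and only if
\[
 [Tu,Tv]-\kappa[Su,Sv]=T[u,v]_T,
\]
and this is \eqref{eq:operator2}. The ternary product works the same way. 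By \eqref{eq:double3}, the second component of $\tri{(Tu,u)}{(Tv,v)}{(Tw,w)}_{\kappa,S}$ is
\[
 \mu(Tv,Tw)u-\mu(Tu,Tw)v+D(Tu,Tv)w-\kappa c(u,v,w)=\Tri{u}{v}{w}.
\]
The first component is
\[
 \tri{Tu}{Tv}{Tw}-\kappa\bigl(\tri{Tu}{Sv}{Sw}+\tri{Su}{Tv}{Sw}+\tri{Su}{Sv}{Tw}\bigr).
\]
Requiring the first component to equal $T\Tri{u}{v}{w}$ gives \eqref{eq:operator3} after moving the $\kappa$-terms across. The only thing to watch here is the term $-\kappa Tc(u,v,w)$. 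It comes from the $V$-component and ends up on the right-hand side of \eqref{eq:operator3} with the correct sign.

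Since $\Gr(T)$ is a linear subspace, it is a subalgebra exactly when it is closed under both products for all $u,v,w$. This proves the equivalence.

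For the second part, let $T$ be an extended $\mathcal O$-operator. The map $j_T$ is a linear bijection onto $\Gr(T)$, because its inverse is the projection onto $V$. The computation above shows that
\[
 [j_Tu,j_Tv]_{\kappa,S}=j_T[u,v]_T,
 \qquad
 \tri{j_Tu}{j_Tv}{j_Tw}_{\kappa,S}=j_T\Tri{u}{v}{w}.
\]
Indeed, the $V$-components agree by definition, and the $\g$-components agree by the operator equations. Therefore the products \eqref{eq:desc2} and \eqref{eq:desc3} are obtained by transporting the structure of the subalgebra $\Gr(T)\subseteq E_{\kappa,S}$ along the bijection $j_T$. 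By Theorem~\ref{thm:double}, $E_{\kappa,S}$ is a Lie--Yamaguti algebra, and hence so is $\Gr(T)$. The skew-symmetries and the four identities \eqref{eq:LY1}--\eqref{eq:LY4} therefore pull back to $V_T$, and $j_T$ is an isomorphism onto the graph.

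I do not expect a real obstacle. The only delicate point is that Theorem~\ref{thm:double} already handles the case $\ker S\neq0$, so no injectivity of $S$ is needed. I would also state explicitly that skew-symmetry of $\Tri{\cdot}{\cdot}{\cdot}$ in its first two arguments is inherited through $j_T$. It can also be checked directly from the balance conditions in Lemma~\ref{lem:background} for the $c$-term.
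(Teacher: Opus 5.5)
Your proposal is correct and follows the paper's proof. You substitute graph elements into \eqref{eq:double2}--\eqref{eq:double3}, read off \eqref{eq:operator2}--\eqref{eq:operator3} as the condition that the $\g$-component equals $T$ applied to the $V$-component, and transfer the Lie--Yamaguti structure from the subalgebra $\Gr(T)$ to $V$ through the bijection $j_T$. Your remark that no injectivity of $S$ or $T$ is needed matches the paper's observation that $j_T$ is injective in all cases.
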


\begin{proof}
Substitution in \eqref{eq:double2} gives
\[
 [j_T(u),j_T(v)]_{\kappa,S}
   =([Tu,Tv]-\kappa[Su,Sv],[u,v]_T).
\]
This pair lies in the graph exactly when \eqref{eq:operator2}
holds. The first component of the ternary product of three
graph elements is
\[
 \tri{Tu}{Tv}{Tw}
   -\kappa\bigl(\tri{Tu}{Sv}{Sw}+\tri{Su}{Tv}{Sw}
                                 +\tri{Su}{Sv}{Tw}\bigr),
\]
and the second is \eqref{eq:desc3}. The result lies in the graph
exactly when \eqref{eq:operator3} holds. The map $j_T$ is injective,
even if $T$ or $S$ has a kernel. Thus all four identities on $V_T$
follow from those on the graph.
\end{proof}

\begin{remark}\label{rem:comparison}
If $S=0$ or $\kappa=0$, these are the relative Rota--Baxter
equations of weight zero \cite{ShengZhao2022,ZhaoQiao2024}.
If $[\cdot,\cdot]=0$ and $\rho=0$, they give the balanced
ternary extension on a Lie triple system. For the adjoint data
$V=\g$, $S=\id$, the binary equation is the modified classical
Yang--Baxter operator equation with our sign convention.
Equation~\eqref{eq:double3} fixes the ternary equation.
Both regular equations are the modified Rota--Baxter
Lie--Yamaguti equations of \cite{Basdouri2025}, with
$\lambda=\kappa$. We study the balanced relative extension,
its associated products and its graph deformations.

The weight-one construction of \cite{QiaoXuZhao2025} uses an
action on a Lie--Yamaguti algebra $(V,b_0,c_0)$ and the graph
equations
\begin{align*}
 [Tu,Tv]&=T\bigl(\rho(Tu)v-\rho(Tv)u+b_0(u,v)\bigr),\\
 \tri{Tu}{Tv}{Tw}
 &=T\bigl(\mu(Tv,Tw)u-\mu(Tu,Tw)v+D(Tu,Tv)w+c_0(u,v,w)\bigr).
\end{align*}
This graph construction differs from ours. Such an action need
not be a balanced extension, and $\kappa=0$ need not give every
post-Lie--Yamaguti algebra. Both constructions give
pre-Lie--Yamaguti algebras when the background products vanish.
\end{remark}

\begin{proposition}\label{prop:roots}
Let $c_0\in\K$ satisfy $c_0^2=-\kappa$. For any linear $T$,
use \eqref{eq:desc2}--\eqref{eq:desc3} on $V$.
Then $T$ is an extended $\mathcal O$-operator exactly when
$T+c_0S$ preserves both products. In this case, $T-c_0S$
also preserves them. Both are homomorphisms $V_T\to\g$.
\end{proposition}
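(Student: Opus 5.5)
The plan is to compose the graph map $j_T$ with the homomorphism \eqref{eq:current-map} and then evaluate at a root of $\varepsilon^2+\kappa$. The \emph{evaluation} map $\K[\varepsilon]/(\varepsilon^2+\kappa)\to\K$, $\varepsilon\mapsto c_0$, is a well-defined algebra homomorphism because $c_0^2=-\kappa$. Tensoring with $\g$ gives a Lie--Yamaguti homomorphism $\g\otimes\K[\varepsilon]/(\varepsilon^2+\kappa)\to\g$. Composing it with \eqref{eq:current-map} yields a homomorphism $\phi_{c_0}:E_{\kappa,S}\to\g$, $(x,u)\mapsto x+c_0Su$. Replacing $c_0$ by $-c_0$ gives $\phi_{-c_0}$. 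I will use Theorem~\ref{thm:double} for this and then compare $[u,v]_T$, $\Tri{u}{v}{w}$ with the products of graph elements.

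For the forward direction, let $T$ be an extended $\mathcal O$-operator. By Theorem~\ref{thm:graph}, $j_T:V_T\to E_{\kappa,S}$ is a homomorphism, so $\phi_{\pm c_0}\circ j_T:u\mapsto Tu\pm c_0Su$ is a homomorphism $V_T\to\g$. This gives preservation by both $T+c_0S$ and $T-c_0S$, and also the final claim.

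For the converse, I will not assume that $V_T$ is a Lie--Yamaguti algebra. I will compute directly for arbitrary linear $T$. The pair $[j_T(u),j_T(v)]_{\kappa,S}$ has second component $[u,v]_T$ and first component $[Tu,Tv]-\kappa[Su,Sv]$. Applying $\phi_{c_0}$, which is a homomorphism regardless of $T$, gives
\[
 [(T+c_0S)u,(T+c_0S)v]=[Tu,Tv]-\kappa[Su,Sv]+c_0S[u,v]_T.
\]
Subtracting $(T+c_0S)[u,v]_T$ shows that the defect of $T+c_0S$ on the binary product equals $[Tu,Tv]-\kappa[Su,Sv]-T[u,v]_T$, which is exactly the defect in \eqref{eq:operator2}. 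The ternary product works the same way. The first component of the triple product of graph elements minus $T$ of its second component is precisely the difference of the two sides of \eqref{eq:operator3}. The $S$-terms cancel because $\phi_{c_0}$ maps the pair to $(T+c_0S)$-images plus $c_0S(\text{second component})$. So $T+c_0S$ preserves both products if and only if \eqref{eq:operator2}--\eqref{eq:operator3} hold. The same identity with $-c_0$ then gives the statement for $T-c_0S$.

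The main point needing care is the claim that the defect of $\phi_{c_0}$ on a general pair equals the graph-membership defect without any division by $c_0$. This is clear from the linear form $\phi_{c_0}(x,u)-(T+c_0S)u=x-Tu$, so I expect no real obstacle. The case $\kappa=0$, $c_0=0$ reduces to $T$ itself and is consistent.
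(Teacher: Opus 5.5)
Your proof is correct, but it is organized differently from the paper's.

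\textbf{The paper's route.} The paper argues by direct expansion. Using equivariance, it writes $S[u,v]_T=[Tu,Sv]+[Su,Tv]$ and the analogous four-term formula for $S\Tri{u}{v}{w}$. It then expands $[(T+c_0S)u,(T+c_0S)v]-(T+c_0S)[u,v]_T$ and its ternary analogue in powers of $c_0$. The relation $c_0^2=-\kappa$ turns the quadratic terms into the mass terms and kills the cubic coefficient $c_0^3+\kappa c_0$. The sign $-c_0$ is handled by repeating the computation, and Theorem~\ref{thm:graph} gives the homomorphism claim.

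\textbf{Your route.} You factor everything through one map, $\phi_{c_0}:E_{\kappa,S}\to\g$, $(x,u)\mapsto x+c_0Su$, obtained from \eqref{eq:current-map} followed by evaluation $\varepsilon\mapsto c_0$. Let $Z$ be the product of graph elements in $E_{\kappa,S}$. For arbitrary $T$, the homomorphism defect of $T+c_0S$ is
\[
 \phi_{c_0}(Z)-(T+c_0S)\pr_2Z=\pr_1Z-T\pr_2Z,
\]
which is exactly the graph-closure defect, i.e.\ the difference of the two sides of \eqref{eq:operator2} or \eqref{eq:operator3}.

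\textbf{Comparison.} Your version needs no expansion, and it explains the vanishing cubic coefficient as $\varepsilon^3=-\kappa\varepsilon$. It treats both signs at once and matches the evaluation picture of Proposition~\ref{prop:models}. The price is that it rests on the current-map part of Theorem~\ref{thm:double}. That part is verified by the same equivariance identities the paper expands here, so both proofs use the same underlying facts. The paper's version is self-contained and shows those identities explicitly.

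Your separate forward-direction argument via $\phi_{\pm c_0}\circ j_T$ is valid but redundant. Your general defect identity already gives both directions for both signs, and you need Theorem~\ref{thm:graph} only to know that $V_T$ is a Lie--Yamaguti algebra.
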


\begin{proof}
For all $u,v,w\in V$, equivariance gives
\begin{align*}
 S[u,v]_T&=[Tu,Sv]+[Su,Tv],\\
 S\Tri{u}{v}{w}
 &=\tri{Su}{Tv}{Tw}+\tri{Tu}{Sv}{Tw}
   +\tri{Tu}{Tv}{Sw}-\kappa\tri{Su}{Sv}{Sw}.
\end{align*}
For the binary product,
\begin{align*}
 &[(T+c_0S)u,(T+c_0S)v]-(T+c_0S)[u,v]_T\\
 &=[Tu,Tv]-T[u,v]_T+c_0^2[Su,Sv]\\
 &\quad+c_0\bigl([Tu,Sv]+[Su,Tv]-S[u,v]_T\bigr)\\
 &=[Tu,Tv]-T[u,v]_T-\kappa[Su,Sv].
\end{align*}
For the ternary product, expand in $c_0$ and use the second
equality above:
\begin{align*}
 &\tri{(T+c_0S)u}{(T+c_0S)v}{(T+c_0S)w}
                         -(T+c_0S)\Tri{u}{v}{w}\\
 &=\tri{Tu}{Tv}{Tw}-T\Tri{u}{v}{w}\\
 &\quad+c_0^2\bigl(\tri{Tu}{Sv}{Sw}
                 +\tri{Su}{Tv}{Sw}+\tri{Su}{Sv}{Tw}\bigr)
       +(c_0^3+\kappa c_0)\tri{Su}{Sv}{Sw}\\
 &=\tri{Tu}{Tv}{Tw}
   -T\bigl(\mu(Tv,Tw)u-\mu(Tu,Tw)v+D(Tu,Tv)w\bigr)\\
 &\quad-\kappa\bigl(\tri{Tu}{Sv}{Sw}
                 +\tri{Su}{Tv}{Sw}+\tri{Su}{Sv}{Tw}
                 -Tc(u,v,w)\bigr).
\end{align*}
Since $c_0^2=-\kappa$, the cubic coefficient is zero.
Both expressions vanish exactly when
\eqref{eq:operator2}--\eqref{eq:operator3} hold.
Then $V_T$ is a Lie--Yamaguti algebra by Theorem~\ref{thm:graph},
so a map preserving both products is a homomorphism.
The same calculation with $-c_0$ gives the result for $T-c_0S$.
\end{proof}

\begin{proposition}\label{prop:Lie}
Let $\g$ be a Lie algebra and $\rho$ a Lie algebra representation.
Let $S$ satisfy $S\rho(x)u=[x,Su]$ and $\rho(Su)v=-\rho(Sv)u$.
Give $\g$ the ternary product $\tri{x}{y}{z}=[[x,y],z]$
and set $\mu(x,y)=\rho(y)\rho(x)$. Then $S$ is balanced
equivariant. A map $T$ solves \eqref{eq:operator2} exactly when
it is an extended $\mathcal O$-operator for this Lie--Yamaguti
structure. Also,
\begin{equation}\label{eq:Lie-descent}
 \Tri{u}{v}{w}=[[u,v]_T,w]_T.
\end{equation}
\end{proposition}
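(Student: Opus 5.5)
Here is the plan: verify the balance conditions directly from the standard Lie-algebra facts, then show that the ternary operator equation follows from the binary one, using Theorem~\ref{thm:graph} to bypass a heavy expansion. Throughout, $\g$ with $[[x,y],z]$ is a Lie--Yamaguti algebra and $(\rho,\mu)$ with $\mu(x,y)=\rho(y)\rho(x)$ is a representation; I take these as standard. Under this choice,
\[
D(x,y)=\rho(x)\rho(y)-\rho(y)\rho(x)+[\rho(x),\rho(y)]-\rho([x,y])=\rho([x,y]),
\]
using that $\rho$ is a Lie representation.

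\textbf{Balance of $S$.} Equivariance for $\rho$ is given. For $\mu$,
\[
S\rho(y)\rho(x)u=[y,[x,Su]]=[[Su,x],y]=\tri{Su}{x}{y}.
\]
For \eqref{eq:bal31}, $\mu(Su,x)v=\rho(x)\rho(Su)v$, which by \eqref{eq:bal2} equals $-\rho(x)\rho(Sv)u=-\mu(Sv,x)u$. For \eqref{eq:bal32}, $\mu(x,Su)v=\rho(Su)\rho(x)v$. I commute $\rho(Su)$ past $\rho(x)$ and use $\rho([Su,x])=\rho(S\rho(-x)u)$, so $\rho([x,Su])v=\rho(S\rho(x)u)v=-\rho(Sv)\rho(x)u$. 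Combining this with \eqref{eq:bal2} should give $-D(x,Sv)u=-\rho([x,Sv])u$. I check the signs carefully but expect them to work out.

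\textbf{Extended operator.} The key observation is that, by \eqref{eq:double2}--\eqref{eq:double3} and Lemma~\ref{lem:background}, $c(u,v,w)=D(Su,Sv)w=\rho([Su,Sv])w=b(b(u,v),w)$. I then claim that $E_{\kappa,S}$ is the Lie--Yamaguti algebra attached to the Lie algebra $(\g\oplus V,[\cdot,\cdot]_{\kappa,S})$, that is,
\[
\tri{X}{Y}{Z}_{\kappa,S}=[[X,Y]_{\kappa,S},Z]_{\kappa,S}.
\]
To verify this, I expand $[[(x,u),(y,v)],(z,w)]$. The first component is
\[
[[x,y]-\kappa[Su,Sv],z]-\kappa[S(\rho(x)v-\rho(y)u),Sw],
\]
and equivariance turns it into the first component of \eqref{eq:double3}. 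The second component is
\[
\rho([x,y]-\kappa[Su,Sv])w-\rho(z)(\rho(x)v-\rho(y)u),
\]
which matches $D(x,y)w-\kappa c(u,v,w)+\mu(y,z)u-\mu(x,z)v$.

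Given this identity, a subspace closed under the binary bracket is automatically closed under the ternary one. So by Theorem~\ref{thm:graph}, \eqref{eq:operator2} alone makes $\Gr(T)$ a subalgebra, and hence $T$ is an extended $\mathcal O$-operator. The converse is trivial.

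\textbf{The descent formula.} Formula \eqref{eq:Lie-descent} follows by transporting the identity through $j_T$: $j_T$ is injective and both brackets on the graph are restrictions. One could also verify it directly from \eqref{eq:desc3} by the same component computation.

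The main obstacle is the bookkeeping of $\kappa$-terms in the first component of the nested-bracket identity. This is where equivariance $S\rho(x)v=[x,Sv]$ must convert $[S\rho(x)v,Sw]$ into $[[x,Sv],Sw]=\tri{x}{Sv}{Sw}$.
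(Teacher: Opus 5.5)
Your proposal is correct and follows essentially the paper's route. You verify balance directly, show that the ternary product of $E_{\kappa,S}$ is the nested binary bracket (via $c(u,v,w)=\rho([Su,Sv])w$ and equivariance for the mixed terms), deduce through Theorem~\ref{thm:graph} that binary graph closure forces ternary closure, and transport the nested-bracket identity through $j_T$ to get \eqref{eq:Lie-descent}.

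Your commutator argument for \eqref{eq:bal32} does close up with the correct signs. The paper's shorter version applies \eqref{eq:bal2} to the pair $u,\rho(x)v$ and then uses $S\rho(x)v=[x,Sv]$. Note also that the closure step needs only the identity $\tri{X}{Y}{Z}_{\kappa,S}=[[X,Y]_{\kappa,S},Z]_{\kappa,S}$, not the Jacobi identity for $[\cdot,\cdot]_{\kappa,S}$.
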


\begin{proof}
The double binary bracket on $\g\ltimes_\rho V$ gives
\eqref{eq:semi2}--\eqref{eq:semi3}. Thus $(\rho,\mu)$ is a
representation and $D(x,y)=\rho([x,y])$.
Equivariance for $\mu$ follows from
$S\rho(y)\rho(x)u=[y,[x,Su]]=[[Su,x],y]$.
Applying $\rho(x)$ to binary balance gives the first ternary
balance identity. For the second,
\begin{align*}
 \rho(Su)\rho(x)v
 &=-\rho(S\rho(x)v)u=-\rho([x,Sv])u=-D(x,Sv)u.
\end{align*}
The proof of Theorem~\ref{thm:double}, using the Jacobi and
representation identities, shows that \eqref{eq:double2} is
a Lie bracket. Its double bracket is \eqref{eq:double3}.
The second component is
\[
 \rho([x,y])w+\rho(z)\rho(y)u-\rho(z)\rho(x)v
       -\kappa\rho([Su,Sv])w,
\]
and $\rho([Su,Sv])w=c(u,v,w)$ by \eqref{eq:Dlift}.
Equivariance gives the three mixed terms in the first component
of \eqref{eq:double3}. Thus binary graph closure implies
ternary closure. The converse includes binary closure by
definition. The double bracket on the graph gives
\eqref{eq:Lie-descent}.
\end{proof}

\begin{proposition}\label{prop:models}
For the adjoint data \eqref{eq:ad} with $S=\id$,
\eqref{eq:current-map} is an isomorphism. Over $\mathbb R$,
it gives
\begin{align*}
 \kappa=-c_0^2<0:&\quad (x,u)\longmapsto(x+c_0u,x-c_0u)
                                      \quad\text{in }\g\oplus\g,\\
 \kappa=c_0^2>0:&\quad (x,u)\longmapsto x+ic_0u
                                      \quad\text{in }\g_{\mathbb C},\\
 \kappa=0:&\quad (x,u)\longmapsto x+u\varepsilon,
                                      \qquad \varepsilon^2=0,
\end{align*}
where $c_0>0$ and $\g_{\mathbb C}$ is viewed as real.
In the split case, the graph of an extended $\mathcal O$-operator
is a Lie--Yamaguti complement to the diagonal. Its parametrization
is $u\mapsto((T+c_0\id)u,(T-c_0\id)u)$.
\end{proposition}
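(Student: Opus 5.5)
The plan has three parts: show that \eqref{eq:current-map} is bijective for the adjoint data with $S=\id$, identify $\K[\varepsilon]/(\varepsilon^2+\kappa)$ over $\mathbb R$ in each sign case, and then read off the graph statement in the split case.

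\emph{Bijectivity.} With $S=\id$ the map sends $(x,u)$ to $x\otimes1+u\otimes\varepsilon$. Since $\{1,\varepsilon\}$ is a basis of $\K[\varepsilon]/(\varepsilon^2+\kappa)$, this map is a linear bijection $\g\oplus\g\to\g\otimes\K[\varepsilon]/(\varepsilon^2+\kappa)$. Theorem~\ref{thm:double} already says it preserves both products, so it is an isomorphism of Lie--Yamaguti algebras. Here the target carries the current products $[x\otimes a,y\otimes b]=[x,y]\otimes ab$ and $\tri{x\otimes a}{y\otimes b}{z\otimes c}=\tri{x}{y}{z}\otimes abc$.

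\emph{Identifying the coefficient algebra.} Over $\mathbb R$ there are three cases.
\begin{itemize}
\item If $\kappa=-c_0^2<0$, then $\varepsilon^2=c_0^2$. The assignment $\varepsilon\mapsto(c_0,-c_0)$ gives an algebra isomorphism onto $\mathbb R\times\mathbb R$, because $(c_0,-c_0)^2=(c_0^2,c_0^2)$ and the two idempotents $(1\pm\varepsilon/c_0)/2$ split the algebra. Tensoring with $\g$ gives $\g\otimes(\mathbb R\times\mathbb R)\cong\g\oplus\g$ with componentwise products, and $x+u\varepsilon\mapsto(x+c_0u,x-c_0u)$.
\item If $\kappa=c_0^2>0$, the assignment $\varepsilon\mapsto ic_0$ gives $\mathbb R[\varepsilon]/(\varepsilon^2+c_0^2)\cong\mathbb C$. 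Hence $\g\otimes\mathbb C=\g_{\mathbb C}$, regarded as a real algebra, and $x+u\varepsilon\mapsto x+ic_0u$.
\item If $\kappa=0$, the algebra is the dual numbers and nothing needs checking.
\end{itemize}
In each case one should confirm that the tensor product of $\g$ with a commutative associative algebra, carrying the current products, is again Lie--Yamaguti. Each identity \eqref{eq:LY1}--\eqref{eq:LY4} is homogeneous: every term involves the same inputs, so after tensoring every term acquires the same coefficient product. For \eqref{eq:LY1} the three inputs appear in both the binary and the ternary terms, so commutativity and associativity of the coefficients give a common factor. This is essentially the same bookkeeping as in the proof of Theorem~\ref{thm:double}.

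\emph{Split case: the graph statement.} By Theorem~\ref{thm:graph}, $T$ is an extended $\mathcal O$-operator exactly when $\Gr(T)$ is a subalgebra of $E_{\kappa,\id}$. The graph is always a linear complement to $\g\oplus0$, because every $(x,u)$ decomposes uniquely as $(x-Tu,0)+(Tu,u)$. Under the split isomorphism, $\g\oplus0$ maps to the diagonal $\{(x,x)\}$, and $(Tu,u)$ maps to $((T+c_0\id)u,(T-c_0\id)u)$. This is the stated parametrization, and it is consistent with Proposition~\ref{prop:roots}.

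The only real care point is the claim that the algebra isomorphism of coefficients induces a Lie--Yamaguti isomorphism after tensoring with $\g$. That holds because the current products are natural in the coefficient algebra. The rest is direct substitution.
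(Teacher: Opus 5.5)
Your proposal is correct and follows the paper's proof: you identify $\K[\varepsilon]/(\varepsilon^2+\kappa)$ by evaluating $\varepsilon$ at $\pm c_0$ or at $ic_0$, or keep it as the dual numbers, and then push the graph through the split isomorphism. The one difference is in the complement claim: you transport the decomposition $(x,u)=(x-Tu,0)+(Tu,u)$, whereas the paper shows that the image of the graph meets the diagonal only in zero ($2c_0u=0$) and then counts dimensions, so your argument also covers infinite-dimensional $\g$. (Your separate check that the current algebra is Lie--Yamaguti is not needed, since this already follows from bijectivity and Theorem~\ref{thm:double}.)
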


\begin{proof}
The coefficient algebra in \eqref{eq:current-map} is
$\mathbb R\oplus\mathbb R$, $\mathbb C$ or the dual numbers,
respectively. Evaluation at the two roots, at $ic_0$ or at
$\varepsilon$ gives the isomorphisms.
In the split case, equal graph components imply $2c_0u=0$.
Thus the graph meets the diagonal only at zero.
The dimension count gives a complement.
\end{proof}

\section{Extended post-Lie--Yamaguti algebras}\label{sec:post}

We define extended post-Lie--Yamaguti algebras and relate them
to extended $\mathcal O$-operators. Each extended
$\mathcal O$-operator gives such an algebra on its domain.
Conversely, each such algebra has a canonical extended
$\mathcal O$-operator. We state all identities using two copies
of the vector space, with indices for the summands.

Let $A$ have bilinear products $b$ and $*$ and trilinear
products $c,p,q,r$. Assume $b$ is skew and $c,r$ are skew in
their first two arguments. Set
\begin{align}
 B(x,y)&=x*y-y*x,\label{eq:post-B}\\
 d(x,y,z)&=p(z,y,x)-p(z,x,y)+x*(y*z)-y*(x*z)\nonumber\\
 &\quad-(x*y-y*x)*z-\kappa b(b(x,y),z),\label{eq:post-d}\\
 C(x,y,z)&=d(x,y,z)+p(x,y,z)-p(y,x,z)-\kappa c(x,y,z),
 \label{eq:post-C}\\
 h(x,y,z)&=q(x,y,z)-q(y,x,z)+r(x,y,z).\label{eq:post-h}
\end{align}
These expressions are defined by the six products.
The binary operator equation requires the term
$-\kappa b(b(x,y),z)$ in \eqref{eq:post-d}.

For indices in $\{0,1\}$, define
$b^a_{ij}:A^{\otimes2}\to A$ and $t^a_{ijk}:A^{\otimes3}\to A$
by the tables below.
\begin{equation}\label{eq:binary-table}
\begin{array}{c|cc}
 (i,j)&b^0_{ij}(x,y)&b^1_{ij}(x,y)\\\hline
 (0,0)&B(x,y)&\kappa b(x,y)\\
 (0,1)&0&x*y\\
 (1,0)&0&-y*x\\
 (1,1)&0&b(x,y)
\end{array}
\end{equation}
\begin{equation}\label{eq:ternary-table}
\begin{array}{c|cc}
 (i,j,k)&t^0_{ijk}(x,y,z)&t^1_{ijk}(x,y,z)\\\hline
 (0,0,0)&C(x,y,z)&\kappa h(x,y,z)\\
 (0,0,1)&0&d(x,y,z)\\
 (0,1,0)&0&-p(y,x,z)\\
 (1,0,0)&0&p(x,y,z)\\
 (0,1,1)&0&-q(y,x,z)\\
 (1,0,1)&0&q(x,y,z)\\
 (1,1,0)&0&r(x,y,z)\\
 (1,1,1)&0&c(x,y,z)
\end{array}
\end{equation}

\begin{definition}\label{def:post}
An extended post-Lie--Yamaguti algebra of mass $\kappa$ is
a vector space with these six products and skew-symmetries
such that, for all $i,j,k,l,m,a\in\{0,1\}$ and $x,y,z,w,v\in A$,
\begin{align}
 &\cyc_{(i,x),(j,y),(k,z)}
 \left(\sum_{e=0}^1 b^a_{ek}(b^e_{ij}(x,y),z)
                      +t^a_{ijk}(x,y,z)\right)=0,
 \label{eq:post-axiom1}\\
 &\cyc_{(i,x),(j,y),(k,z)}
    \sum_{e=0}^1 t^a_{ekl}(b^e_{ij}(x,y),z,w)=0,
 \label{eq:post-axiom2}\\
 &\sum_{e=0}^1 t^a_{ije}(x,y,b^e_{kl}(z,w))\nonumber\\
 &\qquad=\sum_{e=0}^1\left(
    b^a_{el}(t^e_{ijk}(x,y,z),w)
       +b^a_{ke}(z,t^e_{ijl}(x,y,w))\right),
 \label{eq:post-axiom3}\\
 &\sum_{e=0}^1 t^a_{ije}(x,y,t^e_{klm}(z,w,v))\nonumber\\
 &\qquad=\sum_{e=0}^1\left(
    t^a_{elm}(t^e_{ijk}(x,y,z),w,v)
       +t^a_{kem}(z,t^e_{ijl}(x,y,w),v)\right.\nonumber\\
 &\hspace{10em}\left.+t^a_{kle}(z,w,t^e_{ijm}(x,y,v))\right).
 \label{eq:post-axiom4}
\end{align}
The cyclic sums cycle the three index--argument pairs together.
A homomorphism preserves all six products and the mass.
\end{definition}

The table entries use only the six products. Thus these are
finitely many polynomial identities on $A$.
They describe a Lie--Yamaguti structure on $A\oplus A$.

\begin{theorem}\label{thm:post-double}
Definition~\ref{def:post} holds exactly when these products make
$E_A=A\oplus A$ a Lie--Yamaguti algebra:
\begin{align}
 [(x,u),(y,v)]_A
 &=\bigl(B(x,y),\kappa b(x,y)+x*v-y*u+b(u,v)\bigr),
 \label{eq:post-double2}\\
 \tri{(x,u)}{(y,v)}{(z,w)}_A
 &=\Bigl(C(x,y,z),\kappa h(x,y,z)+d(x,y,w)
      +p(u,y,z)-p(v,x,z)\nonumber\\
 &\hspace{5em}-q(v,x,w)+q(u,y,w)+r(u,v,z)+c(u,v,w)\Bigr).
 \label{eq:post-double3}
\end{align}
Then $0\oplus A$ is an ideal with products $(b,c)$.
The quotient has products $(B,C)$. We call it the subadjacent
Lie--Yamaguti algebra.
\end{theorem}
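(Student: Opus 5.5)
The plan is to read Definition~\ref{def:post} as the Lie--Yamaguti identities on $E_A$ written in components. Write $E_A=A_0\oplus A_1$, where $A_0=A\oplus0$ and $A_1=0\oplus A$, and let $\iota_0,\iota_1$ be the two inclusions. The first step is to check that the tables \eqref{eq:binary-table}--\eqref{eq:ternary-table} are exactly the components of \eqref{eq:post-double2}--\eqref{eq:post-double3}:
\[
 \pi_a[\iota_i x,\iota_j y]_A=b^a_{ij}(x,y),\qquad
 \pi_a\tri{\iota_i x}{\iota_j y}{\iota_k z}_A=t^a_{ijk}(x,y,z),
\]
where $\pi_a$ is the projection to the $a$-th summand. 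This amounts to reading off which terms survive in \eqref{eq:post-double2}--\eqref{eq:post-double3} when each input is placed in one summand. For example, the input $(0,u),(0,v),(z,0)$ leaves only $r(u,v,z)$ in the second component and $0$ in the first, which is the $(1,1,0)$ row. The skew-symmetries of $[\cdot,\cdot]_A$ and $\{\cdot,\cdot,\cdot\}_A$ should be checked in the same way:
\begin{itemize}
 \item $B$ and $b$ are skew, and $x*v-y*u$ changes sign when $(x,u)$ and $(y,v)$ are swapped;
 \item for $C$, the combination $p(x,y,z)-p(y,x,z)$ is skew in $x,y$; $d$ is skew in $x,y$ by \eqref{eq:post-d}, using skewness of $b$ in the last term; and $c$ is skew by assumption;
 \item for $h$, the term $q(x,y,z)-q(y,x,z)$ is skew and $r$ is skew;
 \item the mixed terms $p(u,y,z)-p(v,x,z)$ and $q(u,y,w)-q(v,x,w)$ are exchanged with a sign under the swap.
\end{itemize}

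The second step uses multilinearity. Each of \eqref{eq:LY1}--\eqref{eq:LY4} holds on $E_A$ if and only if it holds when every input is homogeneous, that is, of the form $\iota_i x$. It then holds if and only if each $\pi_a$-component vanishes. Inserting a sum $\iota_0\pi_0+\iota_1\pi_1$ between the inner and outer products turns each nested product into a sum over an index $e$ of $b^a_{e\cdot}$ or $t^a_{\cdot\cdot\cdot}$ evaluated at $b^e$ or $t^e$. This gives exactly \eqref{eq:post-axiom1}--\eqref{eq:post-axiom4}. The cyclic sums cycle index--argument pairs because the inputs $\iota_i x$ are cycled as wholes. So the definition is equivalent to $E_A$ being Lie--Yamaguti, with no further content.

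The last step is the ideal and the quotient. From the tables, $b^0_{ij}=0$ and $t^0_{ijk}=0$ whenever some index equals $1$. So any product with an input in $0\oplus A$ lies in $0\oplus A$, and $0\oplus A$ is an ideal. Its products are the $(1,1)$ and $(1,1,1)$ entries, namely $b$ and $c$. The quotient is identified with $A_0$ through $\pi_0$, and its products are the $(0,0)$ and $(0,0,0)$ entries of the $\pi_0$-component, namely $B$ and $C$.

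I expect the main obstacle to be the bookkeeping in the first step: matching each of the sixteen table entries against the explicit formulas, with the correct signs and argument orders. This is especially delicate for the $d$ and $q$ entries, and for the placement of the terms $\kappa b$ and $\kappa h$ in the $(0,0)$ and $(0,0,0)$ rows. I would handle it by evaluating \eqref{eq:post-double2}--\eqref{eq:post-double3} on each homogeneous triple in turn, setting to zero every variable that does not appear.
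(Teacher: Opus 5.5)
Your proposal is correct and follows the paper's proof. You read the table entries $b^a_{ij}$, $t^a_{ijk}$ as the components of the products on homogeneous inputs, reduce the four Lie--Yamaguti identities on $E_A$ to \eqref{eq:post-axiom1}--\eqref{eq:post-axiom4} by multilinearity (inserting $\iota_0\pi_0+\iota_1\pi_1$ between the inner and outer products), and obtain the ideal and the quotient from the vanishing first components. You also check all sixteen table entries and the skew-symmetries explicitly, including the skewness of $d$ via that of $b$, which the paper states only briefly.
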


\begin{proof}
Write $x_0=(x,0)$ and $x_1=(0,x)$. Component $a$ of
$[x_i,y_j]_A$ is $b^a_{ij}(x,y)$. Component $a$ of
$\tri{x_i}{y_j}{z_k}_A$ is $t^a_{ijk}(x,y,z)$.
Thus component $a$ of $[[x_i,y_j]_A,z_k]_A$ is
$\sum_{e=0}^1 b^a_{ek}(b^e_{ij}(x,y),z)$.
The first identity is \eqref{eq:post-axiom1}.
A ternary product with an inner binary product gives
\eqref{eq:post-axiom2}--\eqref{eq:post-axiom3}.
Nested ternary products give \eqref{eq:post-axiom4}.
All placements are covered, so multilinearity proves the result.
The tables and \eqref{eq:post-B}--\eqref{eq:post-h} give the
skew-symmetries. An input in $0\oplus A$ gives zero first
component. The $11$ and $111$ entries give the ideal products.
The first components give the quotient products.
\end{proof}

For $i=j=k=0$, the second component of
\eqref{eq:post-axiom1} is
\begin{equation}\label{eq:post-expanded}
 \kappa\cyc_{x,y,z}
 \bigl(b(x*y-y*x,z)-z*b(x,y)
             +q(x,y,z)-q(y,x,z)+r(x,y,z)\bigr)=0.
\end{equation}
Keep the factor $\kappa$, including when $\kappa=0$.
On $x_0,y_0,z_1$, the same identity is
\begin{align*}
 &(x*y-y*x)*z+\kappa b(b(x,y),z)-x*(y*z)+y*(x*z)\\
 &\hspace{5em}+d(x,y,z)+p(z,x,y)-p(z,y,x)=0,
\end{align*}
which explains \eqref{eq:post-d}.

\begin{lemma}\label{lem:Peiffer}
For balanced $S$, the following operations make
$\g\oplus V$ a Lie--Yamaguti algebra:
\begin{align}
 [(x,u),(y,v)]_{\mathrm P}
 &=([x,y],\rho(x)v-\rho(y)u+b(u,v)),\label{eq:P2}\\
 \tri{(x,u)}{(y,v)}{(z,w)}_{\mathrm P}
 &=\Bigl(\tri{x}{y}{z},\mu(y,z)u-\mu(x,z)v+D(x,y)w\nonumber\\
 &\hspace{3em}-\mu(x,Sw)v+\mu(y,Sw)u
                   +\mu(Sv,z)u+c(u,v,w)\Bigr).
 \label{eq:P3}
\end{align}
Both $(x,u)\mapsto x$ and $(x,u)\mapsto x+Su$ are
homomorphisms.
\end{lemma}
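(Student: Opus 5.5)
The plan is to mirror the proof of Theorem~\ref{thm:double}, but with the coefficient algebra $\K[\varepsilon]/(\varepsilon^2+\kappa)$ replaced by $\K[\varepsilon]/(\varepsilon^2-\varepsilon)\cong\K\times\K$. The guiding observation is that the map
\[
 (x,u)\longmapsto x\otimes1+Su\otimes\varepsilon,\qquad \varepsilon^2=\varepsilon,
\]
should intertwine \eqref{eq:P2}--\eqref{eq:P3} with the current products on $\g\otimes\K[\varepsilon]/(\varepsilon^2-\varepsilon)$. For instance,
\[
 [x+Su\,\varepsilon,\,y+Sv\,\varepsilon]=[x,y]+\bigl([x,Sv]+[Su,y]+[Su,Sv]\bigr)\varepsilon,
\]
and by \eqref{eq:eqv} and \eqref{eq:bc-morphism} the $\varepsilon$-coefficient is $S$ applied to the second component of \eqref{eq:P2}.

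\textbf{Step 1: the ternary product.} I would expand the product of $x+Su\varepsilon$, $y+Sv\varepsilon$ and $z+Sw\varepsilon$ with $\varepsilon^k=\varepsilon$ for $k\ge1$. Using $S\mu(x,y)u=\tri{Su}{x}{y}$ and $SD(x,y)u=\tri{x}{y}{Su}$ from Lemma~\ref{lem:background}, I expect the $\varepsilon$-coefficient to be $S$ of the second component of \eqref{eq:P3}:
\begin{itemize}
\item the terms with one $V$-input give the semidirect part;
\item the terms with two $V$-inputs give $-\mu(x,Sw)v$, $\mu(y,Sw)u$ and $\mu(Sv,z)u$;
\item the term with three $V$-inputs gives $c(u,v,w)$.
\end{itemize}
This only shows compatibility after applying $S$. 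Since $S$ need not be injective, it cannot prove the identities in $V$.

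\textbf{Step 2: the identities in $V$.} For this I would run the argument of Theorem~\ref{thm:double} directly. Skew-symmetry of \eqref{eq:P2} follows from \eqref{eq:bal2}. Skew-symmetry of \eqref{eq:P3} in its first two arguments follows from the lift equalities $\mu(Sv,x)u=-\mu(Su,x)v$, $\mu(x,Sv)u=D(Su,x)v$ and $c(u,v,w)=-\mu(Su,Sw)v$.

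Then take any of \eqref{eq:LY1}--\eqref{eq:LY4} with $m$ inputs in $V$. A product with at least one $V$-input has its $V$-component given by lifting, and every coefficient is $1$, so all terms carry the same factor. If $m=0$, the identity is one in $\g$. If $m\ge1$, fix one $V$-input. The balance and lift equalities of Definition~\ref{def:balanced} and Lemma~\ref{lem:background} let us replace every other $V$-input $v$ by $Sv\in\g$, moving $S$ through inner products by \eqref{eq:bc-morphism} and \eqref{eq:Dlift}. This turns the expression into a sum of semidirect-product identities \eqref{eq:semi2}--\eqref{eq:semi3} with exactly one $V$-input, plus identities in $\g$, so it vanishes.

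\textbf{Step 3: the homomorphisms.} The projection $(x,u)\mapsto x$ preserves the products because the first components of \eqref{eq:P2}--\eqref{eq:P3} are $[x,y]$ and $\tri{x}{y}{z}$. The map $(x,u)\mapsto x+Su$ is the composite of the current map with evaluation at $\varepsilon=1$. By Step~1 it sends products to $[x+Su,y+Sv]$ and $\tri{x+Su}{y+Sv}{z+Sw}$, which is checked directly from the equivariance formulas.

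The main obstacle is the bookkeeping in Step~2. Unlike $E_{\kappa,S}$, the products here have terms with two $V$-inputs landing in $V$, so many more placements occur. For each placement one must verify that the chosen lift is consistent, that is, independent of which $V$-input is kept in $V$, before applying $S$. I would handle this by proving once a single lemma: every iterated product whose inputs include at least one from $V$ equals the semidirect expression with one designated $V$-input kept and all the others replaced by their images under $S$, and this is independent of the designated input. I would prove the lemma by induction on nesting depth, using \eqref{eq:bal2}--\eqref{eq:bal32}. The four identities then reduce uniformly to the semidirect case.
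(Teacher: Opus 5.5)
Your proposal is correct, but it takes a different route from the paper.

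The paper does not redo the lifting bookkeeping. It observes that the invertible linear map $(x,u)\mapsto(x+\tfrac12Su,\tfrac12u)$ carries \eqref{eq:P2}--\eqref{eq:P3} to the products of $E_{-1,S}$. Balance collapses the terms linear in $S$, and each of the four quadratic terms contributes $\tfrac14c(u,v,w)$. So the skew-symmetries and all four identities are inherited from Theorem~\ref{thm:double}, and the two homomorphisms are then checked directly.

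Your own guiding observation already contains this reduction. The element $\varepsilon'=\tfrac12(1+\varepsilon)$ is an idempotent in $\K[\varepsilon]/(\varepsilon^2-1)$, which identifies your coefficient algebra with the one for $\kappa=-1$. Rewriting $x\otimes1+Su\otimes\varepsilon'$ as $(x+\tfrac12Su)\otimes1+\tfrac12Su\otimes\varepsilon$ is exactly the paper's isomorphism followed by the current map \eqref{eq:current-map} at $\kappa=-1$.

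Your Step~2 instead proves the designated-input lift lemma directly for the new placements, where two $V$-inputs now land in $V$ with coefficient $1$. I checked that this goes through. For a single product, the equalities $b(u,v)=-\rho(Sv)u$, $\mu(Sv,z)u=-\mu(Su,z)v$, $\mu(y,Sw)u=D(Su,y)w$, $-\mu(x,Sw)v=D(x,Sv)w$ and \eqref{eq:Dlift} show that the value does not depend on which $V$-input is kept. For nested products, $S$ of a $V$-valued product is the $\g$-product of the images, so the induction on nesting depth works in both cases: the kept input inside or outside the inner product. One small point: for $m\ge1$ every term lies in $V$, so the phrase ``plus identities in $\g$'' is unnecessary.

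The two routes buy different things. Yours is self-contained, avoids dividing by $2$, and makes visible that $0\oplus V$ is an ideal with products $(b,c)$ and quotient $\g$. The paper's is a one-line reduction that reuses the $\ker S$-safe argument already given in Theorem~\ref{thm:double} rather than repeating it.
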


\begin{proof}
The invertible map $(x,u)\mapsto(x+\frac12Su,\frac12u)$
sends these products to those of $E_{-1,S}$.
To apply its inverse to a product in $E_{-1,S}$, subtract
$S$ times the second component from the first, then double
the second. For the binary product, this gives $[x,y]$ and
$\rho(x)v-\rho(y)u+b(u,v)$. In the ternary second component,
balance reduces the terms linear in $S$ to
\[
 -\mu(x,Sw)v+\mu(y,Sw)u+\mu(Sv,z)u.
\]
Each of the four quadratic terms gives $\frac14c(u,v,w)$
by \eqref{eq:Dlift}. This gives \eqref{eq:P3}.
The identities follow from Theorem~\ref{thm:double}.
The first projection preserves both products.
Applying $S$ to each second component and adding the first
gives $[x+Su,y+Sv]$ and $\tri{x+Su}{y+Sv}{z+Sw}$.
\end{proof}

\begin{theorem}\label{thm:operator-post}
Let $T$ be an extended $\mathcal O$-operator with balanced
extension $S$ and mass $\kappa$. The operations
\begin{align}
 b(u,v)&=\rho(Su)v,&u*v&=\rho(Tu)v,\label{eq:post-induced2}\\
 c(u,v,w)&=\mu(Sv,Sw)u,&p(u,v,w)&=\mu(Tv,Tw)u,
 \label{eq:post-induced31}\\
 q(u,v,w)&=\mu(Tv,Sw)u,&r(u,v,w)&=\mu(Sv,Tw)u
 \label{eq:post-induced32}
\end{align}
make $V$ an extended post-Lie--Yamaguti algebra of mass
$\kappa$, with subadjacent algebra $V_T$. The map
\begin{equation}\label{eq:post-realization-map}
 E_V\longrightarrow\g,\qquad (u,a)\longmapsto Tu+Sa
\end{equation}
is a Lie--Yamaguti homomorphism.
\end{theorem}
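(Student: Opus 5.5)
By Theorem~\ref{thm:post-double}, it suffices to show that the products \eqref{eq:post-induced2}--\eqref{eq:post-induced32} make $E_V=V\oplus V$, with \eqref{eq:post-double2}--\eqref{eq:post-double3}, a Lie--Yamaguti algebra. I will not check the axioms \eqref{eq:post-axiom1}--\eqref{eq:post-axiom4} one by one. Instead I realize $E_V$ inside a Lie--Yamaguti algebra already known to satisfy the identities. The natural candidate is the Peiffer-type algebra of Lemma~\ref{lem:Peiffer} on $\g\oplus V$, or equivalently $E_{-1,S}$ after its change of variables, combined with the graph of $T$. Concretely, I consider the linear map
\[
 \Phi:E_V\to\g\oplus V,\qquad \Phi(u,a)=(Tu,\,u+a).
\]
On the first summand this is the graph embedding $j_T$ of Theorem~\ref{thm:graph}, shifted by a copy of $V$ placed in the ideal direction. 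The map $\Phi$ is injective, since $Tu=0$ and $u+a=0$ force $u=0$ only when combined with the first coordinate of a suitable pairing. If the plain form fails, I will instead use $(u,a)\mapsto(Tu,u,a)$ into $E_{\kappa,S}\oplus V$ with an appropriate twisted structure. Either way, I transport the structure through an injective map whose image is a subalgebra. Then all identities hold on $E_V$ automatically, as in the proof of Theorem~\ref{thm:graph}.

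The key steps run as follows. First, I compute the skew-symmetries and the derived expressions $B,d,C,h$ of \eqref{eq:post-B}--\eqref{eq:post-h} for the induced products, using the balance identities \eqref{eq:bal2}--\eqref{eq:bal32} and \eqref{eq:Dlift}. I expect $B(u,v)=[u,v]_T$ and $d(u,v,w)=D(Tu,Tv)w-\kappa\rho([Su,Sv])w$, with the $-\kappa b(b(u,v),w)$ term matching $\kappa c(u,v,w)$ via \eqref{eq:Dlift}. Hence $C=\Tri{}{}{}$ from \eqref{eq:desc3}, and $h(u,v,w)=\mu(Tv,Sw)u-\mu(Tu,Sw)v+\mu(Sv,Tw)u$. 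Second, I find the correct embedding. Since $(u,a)\mapsto Tu+Sa$ should be a homomorphism, and Lemma~\ref{lem:Peiffer} says $(x,w)\mapsto x+Sw$ is a homomorphism out of the Peiffer algebra, I would try $\Psi(u,a)=(Tu,a)$ into a suitable algebra $\g\oplus V$. I then compare \eqref{eq:post-double2}--\eqref{eq:post-double3} with \eqref{eq:P2}--\eqref{eq:P3} modified by $\kappa$. This comparison will show which $\kappa$-twisted Peiffer algebra is needed.

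I will handle that comparison by working in $E_{\kappa,S}\times V$. Here the first coordinate lies on the graph, and the extra $V$-coordinate carries $a$ with products computed by $S$. The operator equations \eqref{eq:operator2}--\eqref{eq:operator3} are exactly what make the $\g$-component of the image equal to $T$ applied to the first $V$-component. This is the step where $T$ being an extended $\mathcal O$-operator is used.

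Third, given the homomorphic embedding, the identities on $E_V$ follow. The subadjacent algebra is the quotient by $0\oplus V$, with products $(B,C)=([\cdot,\cdot]_T,\Tri{\cdot}{\cdot}{\cdot})$, which is $V_T$. Finally, I check that $(u,a)\mapsto Tu+Sa$ preserves both products. I apply $S$ to the second components of \eqref{eq:post-double2}--\eqref{eq:post-double3} via equivariance \eqref{eq:eqv} and \eqref{eq:bc-morphism}, and add $T$ of the first components, replacing $T[u,v]_T$ and $T\Tri{u}{v}{w}$ by means of \eqref{eq:operator2}--\eqref{eq:operator3}. The $\kappa$-terms should cancel in the same way as in the proof of Proposition~\ref{prop:roots}.

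I expect the main obstacle to be the ternary bookkeeping. Specifically, I need to show that the second component of \eqref{eq:post-double3}, especially the $\kappa h$ term and the $q,r$ terms, matches the target exactly before applying $S$, since $S$ may have a kernel. I will handle this term by term with \eqref{eq:bal31}--\eqref{eq:bal32}, as in the proof of Lemma~\ref{lem:Peiffer}. If the embedding route proves awkward, I will fall back on verifying the four identities on $E_V$ by the parity argument of Theorem~\ref{thm:double}, sorting the terms by the number of inputs in the second summand.
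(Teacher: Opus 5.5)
Your overall strategy, realizing $E_V$ inside a known Lie--Yamaguti algebra by an injective homomorphism, is the paper's. Your final check that $(u,a)\mapsto Tu+Sa$ preserves both products is also fine. But the embedding, which carries the whole proof, is never produced, and the candidates you name fail.

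The map $\Phi(u,a)=(Tu,u+a)$ has kernel $\{(u,-u):u\in\ker T\}$, so it is not injective when $\ker T\neq0$. It is not a homomorphism into $(\g\oplus V)_{\mathrm P}$ either. On two first-copy inputs, the image of the $E_V$-product is $(TB(u,v),B(u,v)+\kappa b(u,v))$. The Peiffer product of the images is $([Tu,Tv],B(u,v)+b(u,v))$, and $[Tu,Tv]-TB(u,v)=\kappa[Su,Sv]$.

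The map $\Psi(u,a)=(Tu,a)$ kills $\ker T\oplus0$. For $\kappa\neq0$ it fails on the same inputs, because nothing in $(\g\oplus V)_{\mathrm P}$ produces the entries $\kappa b$ and $\kappa h$ of \eqref{eq:binary-table}--\eqref{eq:ternary-table}.

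Your variant $(u,a)\mapsto(Tu,u,a)$ does fix injectivity. However, the ``appropriate twisted structure'' on its target is exactly what has to be built. As a plain direct product it fails, since the mixed products $u*a=\rho(Tu)a$ would vanish.

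The parity fallback from Theorem~\ref{thm:double} does not transfer. $E_V$ is not graded by its two copies, since $b^1_{00}=\kappa b$ and $t^1_{000}=\kappa h$ are nonzero. Also, first-copy inputs enter through $T$, which is not equivariant. So identities such as \eqref{eq:post-expanded} do not reduce to identities in $\g$ or in a semidirect product.

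The missing idea is to adjoin $c_0$ with $c_0^2=-\kappa$. By Proposition~\ref{prop:roots}, $T+c_0S:V_T\to\g$ is a homomorphism, and by Lemma~\ref{lem:Peiffer} so is the projection $(x,a)\mapsto x$ on $(\g\oplus V)_{\mathrm P}$. Hence their fiber product $\{(u,((T+c_0S)u,a))\}\subset V_T\times(\g\oplus V)_{\mathrm P}$ is a subalgebra. The map $(u,a)\mapsto(u,((T+c_0S)u,a-c_0u))$ is injective whatever the kernels of $T$ and $S$ are. The shift by $c_0u$ is what produces $-c_0^2b=\kappa b$ in the $00$ entry and $\kappa h$ in the $000$ entry. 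The resulting identities have coefficients in $\K$, so they descend from the extended field to $\K$.

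Separately, your predicted $d$ is off. By \eqref{eq:operator2}, the term $-(u*v-v*u)*w=-\rho(TB(u,v))w$ equals $-\rho([Tu,Tv])w+\kappa\rho([Su,Sv])w$. Its second part cancels $-\kappa b(b(u,v),w)=-\kappa\rho([Su,Sv])w$, so $d(u,v,w)=D(Tu,Tv)w$ exactly. With your extra term, $C$ would differ from $\Tri{u}{v}{w}$. Also, $\rho([Su,Sv])w$ is not $c(u,v,w)=D(Su,Sv)w$ in general.
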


\begin{proof}
Balance gives the skew-symmetries. The binary operator
equation gives
\[
 \rho([Tu,Tv])w=(u*v-v*u)*w+\kappa b(b(u,v),w).
\]
Substituting this into \eqref{eq:D} gives
$D(Tu,Tv)w=d(u,v,w)$, so $C(u,v,w)=\Tri{u}{v}{w}$.
Equivariance also gives
\begin{align}
 Sh(u,v,w)&=\tri{Tu}{Sv}{Sw}+\tri{Su}{Tv}{Sw}
                                  +\tri{Su}{Sv}{Tw},\label{eq:h-lift}\\
 [Tu,Tv]&=TB(u,v)+\kappa Sb(u,v),\label{eq:T-binary-lift}\\
 \tri{Tu}{Tv}{Tw}&=TC(u,v,w)+\kappa Sh(u,v,w).
 \label{eq:T-ternary-lift}
\end{align}
We check the defining identities without cancelling $S$.

If needed, extend the field to include $c_0$ with
$c_0^2=-\kappa$. By Proposition~\ref{prop:roots},
$T+c_0S:V_T\to\g$ is a homomorphism. Thus the pullback
\[
 \{(u,((T+c_0S)u,a)):u,a\in V\}
 \subset V_T\times(\g\oplus V)_{\mathrm P}
\]
is a Lie--Yamaguti subalgebra. Identify it with $V\oplus V$
and use the new coordinates $(u,a)\mapsto(u,a+c_0u)$.
For two elements in the first copy, the new second component is
\begin{align*}
 &-c_0\rho((T+c_0S)u)v+c_0\rho((T+c_0S)v)u
                     +c_0^2b(u,v)+c_0B(u,v)\\
 &\hspace{3em}=-c_0^2b(u,v)=\kappa b(u,v).
\end{align*}
For one element in each copy, it is
$\rho((T+c_0S)u)v-c_0b(u,v)=u*v$.
For two in the second copy, it is $b(u,v)$.
This gives the binary table.

For the ternary product, expansion and balance give
\begin{align*}
 &\mu((T+c_0S)v,(T+c_0S)w)u
  -\mu((T+c_0S)u,(T+c_0S)w)v\\
 &\qquad+D((T+c_0S)u,(T+c_0S)v)w\\
 &=\mu(Tv,Tw)u-\mu(Tu,Tw)v+D(Tu,Tv)w
       +2c_0h(u,v,w)+3c_0^2c(u,v,w),\\
 &-\mu((T+c_0S)u,Sw)v+\mu((T+c_0S)v,Sw)u
                         +\mu(Sv,(T+c_0S)w)u\\
 &=h(u,v,w)+3c_0c(u,v,w).
\end{align*}
With all inputs in the first copy, the second component is
\begin{align*}
 &-c_0\bigl(\mu(Tv,Tw)u-\mu(Tu,Tw)v+D(Tu,Tv)w
                      +2c_0h(u,v,w)+3c_0^2c(u,v,w)\bigr)\\
 &\quad+c_0^2\bigl(h(u,v,w)+3c_0c(u,v,w)\bigr)
       -c_0^3c(u,v,w)+c_0C(u,v,w)\\
 &=\kappa h(u,v,w).
\end{align*}
For the $100$ placement it is
\begin{align*}
 &\mu((T+c_0S)v,(T+c_0S)w)u
   -c_0\mu((T+c_0S)v,Sw)u\\
 &\hspace{4em}-c_0\mu(Sv,(T+c_0S)w)u
      +c_0^2\mu(Sv,Sw)u=\mu(Tv,Tw)u.
\end{align*}
The $101$ and $110$ entries are
$\mu((T+c_0S)v,Sw)u-c_0c(u,v,w)=q(u,v,w)$ and
$\mu(Sv,(T+c_0S)w)u-c_0c(u,v,w)=r(u,v,w)$.
The $111$ entry is $c(u,v,w)$. Skew-symmetry gives $010$ and $011$.
The $001$ entry is $D(Tu,Tv)w=d(u,v,w)$. It also follows from
the first Lie--Yamaguti identity on $u_0,v_0,w_1$ above.
This gives the ternary table. Theorem~\ref{thm:post-double}
proves the defining identities. Their coefficients lie in $\K$,
so the identities hold over $\K$.

Equations \eqref{eq:T-binary-lift}--\eqref{eq:T-ternary-lift}
show that \eqref{eq:post-realization-map} preserves products
with all inputs in the first copy. Equivariance handles the
other placements; for example, $Sq(u,v,w)=\tri{Su}{Tv}{Sw}$
and $Sr(u,v,w)=\tri{Su}{Sv}{Tw}$. The tables and multilinearity
give the result.
\end{proof}

\begin{theorem}\label{thm:post-operator}
Let $A$ be an extended post-Lie--Yamaguti algebra. On the
ideal $0\oplus A$ in $E_A$ define
\begin{align}
 \rho_A(X)u&=\pr_2[X,(0,u)]_A,&
 \mu_A(X,Y)u&=\pr_2\tri{(0,u)}{X}{Y}_A,\label{eq:post-action}\\
 S_Au&=(0,u),&T_Au&=(u,0).\label{eq:post-canonical}
\end{align}
Then $(A;\rho_A,\mu_A)$ is a representation of $E_A$.
The map $S_A$ is balanced equivariant, and $T_A$ is an extended
$\mathcal O$-operator of mass $\kappa$.
Formulas \eqref{eq:post-induced2}--\eqref{eq:post-induced32}
recover the six original products.
\end{theorem}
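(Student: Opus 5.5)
The whole argument rests on Theorem~\ref{thm:post-double}: $E_A$ is a Lie--Yamaguti algebra and $0\oplus A$ is an ideal in it. First, $(A;\rho_A,\mu_A)$ is a representation of $E_A$. I will not check \eqref{eq:rep1}--\eqref{eq:rep5} one by one. The restriction of the adjoint representation of $E_A$ to an ideal is a representation; this is exactly the adjoint data \eqref{eq:ad} restricted, as noted after Definition~\ref{def:balanced}. Here $\rho_A(X)u=[X,(0,u)]_A$ and $\mu_A(X,Y)u=\tri{(0,u)}{X}{Y}_A$ take values in $0\oplus A$ because it is an ideal. The semidirect product $E_A\oplus(0\oplus A)$ then embeds into the semidirect product with the adjoint representation. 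The latter is a Lie--Yamaguti algebra by \cite{ShengZhao2022}, so its subalgebra is one too. From \eqref{eq:ad}, the associated map is $D_A(X,Y)u=\pr_2\tri{X}{Y}{(0,u)}_A$, using \eqref{eq:LY1} in $E_A$.

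Next, $S_A$ is balanced equivariant. As an inclusion of an ideal with the restricted adjoint representation, this is the example stated after Definition~\ref{def:balanced}. I will still check \eqref{eq:eqv}--\eqref{eq:bal32} directly, since each is a one-line consequence of skew-symmetry:
\[
\rho_A(S_Au)v=\pr_2[(0,u),(0,v)]_A=b(u,v)=-\rho_A(S_Av)u,
\]
and for the other two, $\mu_A(S_Au,X)v=\pr_2\tri{(0,v)}{(0,u)}{X}_A=-\mu_A(S_Av,X)u$ and $\mu_A(X,S_Au)v=\pr_2\tri{(0,v)}{X}{(0,u)}_A=-\pr_2\tri{X}{(0,v)}{(0,u)}_A=-D_A(X,S_Av)u$.

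The substantive step is that $T_A$ is an extended $\mathcal O$-operator. By Theorem~\ref{thm:graph}, it is equivalent, and easier, to verify \eqref{eq:operator2}--\eqref{eq:operator3} directly from the tables \eqref{eq:binary-table} and \eqref{eq:ternary-table}.

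\emph{Binary equation.} We have $[T_Au,T_Av]_A=(B(u,v),\kappa b(u,v))$. Also $\rho_A(T_Au)v-\rho_A(T_Av)u=u*v-v*u=B(u,v)$ from the $(0,1)$ and $(1,0)$ entries. So $T_A$ of it is $(B(u,v),0)$, and the difference is $(0,\kappa b(u,v))$. This equals $\kappa[S_Au,S_Av]_A$.

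\emph{Ternary equation.} The left side is
\[
\tri{T_Au}{T_Av}{T_Aw}_A-T_A(\cdots)=(C,\kappa h)-(X,0),
\]
where $X=p(u,v,w)-p(v,u,w)+d(u,v,w)$, read off from the $100$, $010$ and $001$ entries. By \eqref{eq:post-C}, $C-X=-\kappa c(u,v,w)$. So the left side is $(-\kappa c,\kappa h)$.

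On the right, the three mixed brackets give second components from the $011$, $101$ and $110$ entries, namely $-q(v,u,w)+q(u,v,w)+r(u,v,w)=h(u,v,w)$. Their first components are zero. The term $T_Ac_A$ uses $c_A(u,v,w)=\mu_A(S_Av,S_Aw)u=c(u,v,w)$ from the $111$ entry. So the right side is $\kappa\bigl((0,h)-(c,0)\bigr)$, which matches the left side.

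Finally, the six products are recovered by reading the same table entries in \eqref{eq:post-induced2}--\eqref{eq:post-induced32}:
\begin{itemize}
\item $\rho_A(S_Au)v=b(u,v)$ and $\rho_A(T_Au)v=u*v$;
\item $\mu_A(S_Av,S_Aw)u=c(u,v,w)$ and $\mu_A(T_Av,T_Aw)u=p(u,v,w)$;
\item $\mu_A(T_Av,S_Aw)u=q(u,v,w)$ and $\mu_A(S_Av,T_Aw)u=r(u,v,w)$.
\end{itemize}

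The main obstacle I expect is the first step: justifying cleanly that restricting the adjoint representation to an ideal gives a representation in the sense of Definition~\ref{def:rep}. In particular, one has to confirm that $D_A$ agrees with the bracket form via \eqref{eq:LY1}. The embedding-into-semidirect-product argument should settle this.
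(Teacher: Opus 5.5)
Your proposal is correct and follows the paper's proof essentially step for step. You restrict the adjoint representation of $E_A$ to the ideal $0\oplus A$, obtain balance from skew-symmetry on ideal inputs, and verify both operator equations and the recovery of $b,*,c,p,q,r$ by reading entries of the tables. The minor differences are these: you justify the representation step by embedding the semidirect products, and you read $D_A(T_Au,T_Av)w=d(u,v,w)$ off the $001$ entry using \eqref{eq:LY1} in $E_A$. The paper instead gets this identity directly from \eqref{eq:post-d} and the defining formula \eqref{eq:D} for $D_A$.
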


\begin{proof}
The adjoint representation of $E_A$ preserves the ideal
$0\oplus A$. Its restriction gives the five representation
identities for $(\rho_A,\mu_A)$. In particular,
\begin{align*}
 \rho_A((x,u))v&=x*v+b(u,v),\\
 \mu_A((x,u),(y,v))w
   &=p(w,x,y)+q(w,x,v)+r(w,u,y)+c(w,u,v).
\end{align*}
The map $S_A$ includes this ideal. Thus
$S_A\rho_A(X)u=[X,S_Au]_A$ and
$S_A\mu_A(X,Y)u=\tri{S_Au}{X}{Y}_A$.
Evaluating the adjoint products on two or three ideal elements
and using skew-symmetry gives the three balance conditions.
Hence $S_A$ is balanced equivariant.

By \eqref{eq:post-d} and the formula for $D_A$,
\[
 D_A(T_Au,T_Av)w=d(u,v,w).
\]
It follows that
\begin{align*}
 \rho_A(T_Au)v-\rho_A(T_Av)u&=B(u,v),\\
 \mu_A(T_Av,T_Aw)u-\mu_A(T_Au,T_Aw)v
                     +D_A(T_Au,T_Av)w
   &=C(u,v,w)+\kappa c(u,v,w).
\end{align*}
The binary operator identity is now
\[
 [T_Au,T_Av]_A-T_AB(u,v)
     =(0,\kappa b(u,v))=\kappa[S_Au,S_Av]_A.
\]
For the ternary identity, its left-hand side is
\begin{align*}
 &\tri{T_Au}{T_Av}{T_Aw}_A
   -T_A\bigl(C(u,v,w)+\kappa c(u,v,w)\bigr)\\
 &=(C(u,v,w),\kappa h(u,v,w))
           -(C(u,v,w)+\kappa c(u,v,w),0)\\
 &=(-\kappa c(u,v,w),\kappa h(u,v,w)).
\end{align*}
The three mixed brackets are
\begin{align*}
 \tri{T_Au}{S_Av}{S_Aw}_A&=(0,-q(v,u,w)),\\
 \tri{S_Au}{T_Av}{S_Aw}_A&=(0,q(u,v,w)),\\
 \tri{S_Au}{S_Av}{T_Aw}_A&=(0,r(u,v,w)).
\end{align*}
Their sum is $(0,h(u,v,w))$, and
$T_Ac(u,v,w)=(c(u,v,w),0)$. Thus the right side of the ternary
extended $\mathcal O$-operator equation is the same pair.
Hence $T_A$ is an extended $\mathcal O$-operator.
Substituting $S_Au=(0,u)$ and $T_Au=(u,0)$ in the
representation formulas recovers $b,*,c,p,q,r$.
\end{proof}

\begin{corollary}
If $b=c=q=r=0$, Definition~\ref{def:post} gives the
pre-Lie--Yamaguti structure from an ordinary
$\mathcal O$-operator, with products $*,p$.
For the Lie algebra data of Proposition~\ref{prop:Lie}, the
construction agrees with the binary extended $\mathcal O$-operator
and its induced Lie bracket.
\end{corollary}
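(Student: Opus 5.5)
The corollary has two parts, and I would treat them separately.

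\emph{Part 1: $b=c=q=r=0$.} I would start by substituting into the tables \eqref{eq:binary-table}--\eqref{eq:ternary-table}. Then $h=0$. The terms $\kappa b(b(x,y),z)$ in \eqref{eq:post-d} and $\kappa c$ in \eqref{eq:post-C} vanish, so $d$ and $C$ are the usual pre-Lie--Yamaguti expressions in $*$ and $p$. Every entry carrying a $\kappa$ factor or a $b,c,q,r$ entry is zero, so the mass disappears completely.

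Next I would apply Theorem~\ref{thm:post-double}. It says the axioms are equivalent to $E_A=A\oplus A$ being Lie--Yamaguti with products \eqref{eq:post-double2}--\eqref{eq:post-double3}. With these vanishings, those products are exactly the semidirect product \eqref{eq:semi2}--\eqref{eq:semi3} of the subadjacent algebra $(A,B,C)$ with the representation $\rho(x)v=x*v$, $\mu(y,z)u=p(u,y,z)$, $D(x,y)w=d(x,y,w)$.

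By the equivalence recalled after Definition~\ref{def:rep}, this holds exactly when $(A,B,C)$ is Lie--Yamaguti and $(\rho,\mu)$ is a representation. That is the standard characterization of pre-Lie--Yamaguti algebras via the semidirect product \cite{ShengZhao2022}.

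To link with operators, I would note that if $T$ satisfies \eqref{eq:operator2}--\eqref{eq:operator3} with $S=0$, then \eqref{eq:post-induced2}--\eqref{eq:post-induced32} give $b=c=q=r=0$. By Remark~\ref{rem:comparison}, $T$ is then an ordinary $\mathcal O$-operator, and Theorem~\ref{thm:operator-post} reduces to the known $*,p$ construction.

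\emph{Part 2: the Lie algebra data of Proposition~\ref{prop:Lie}.} Here $\mu(x,y)=\rho(y)\rho(x)$. The induced products become $p(u,v,w)=w*(v*u)$, $q(u,v,w)=\rho(Sw)(v*u)$ and $r(u,v,w)=w*b(v,u)$, with $c$ expressed through $b$ in the same way. So all ternary products are determined by $*$ and $b$.

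I would then check three things:
\begin{itemize}
\item $B(u,v)=[u,v]_T$, directly from \eqref{eq:desc2}.
\item $C(u,v,w)=\Tri{u}{v}{w}=[[u,v]_T,w]_T$, by Theorem~\ref{thm:operator-post} together with \eqref{eq:Lie-descent}.
\item The double $E_V$ is the ternary extension $\{X,Y,Z\}=[[X,Y],Z]$ of its binary bracket \eqref{eq:post-double2}.
\end{itemize}
For the third point, the realization map \eqref{eq:post-realization-map} and Proposition~\ref{prop:Lie} (binary closure implies ternary closure) reduce the ternary axioms to the binary ones. What remains is the binary extended post-Lie structure $(*,b)$ of \cite{BaiGuoNi2011}.

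The main obstacle is this last identification. One has to show that each entry $t^a_{ijk}$ equals the corresponding double-bracket entry $\sum_e b^a_{ek}(b^e_{ij})$. I would first use \eqref{eq:post-expanded} and the $001$ computation, then the representation identities $\rho([x,y])=[\rho(x),\rho(y)]$ and balance, and check the cases $a=1$ one index pattern at a time.
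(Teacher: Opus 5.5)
Your Part~1 and the first two bullets of Part~2 are the paper's argument. For $b=c=q=r=0$, the double $E_A$ is the semidirect product of $(A,B,C)$ with $\rho(x)v=x*v$, $\mu(y,z)u=p(u,y,z)$, and its representation identities are the pre-Lie--Yamaguti axioms. For the data of Proposition~\ref{prop:Lie}, Theorem~\ref{thm:operator-post} together with \eqref{eq:Lie-descent} identifies the subadjacent algebra with $(V,[\cdot,\cdot]_T,[[\cdot,\cdot]_T,\cdot]_T)$; that is the whole of the paper's proof.

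Your third bullet, that all of $E_V$ satisfies $\{X,Y,Z\}=[[X,Y],Z]$, is true but not needed for the corollary, so it is not the main obstacle. If you want it anyway, the realization map $(u,a)\mapsto Tu+Sa$ cannot give it: it only shows that the difference of the two sides lies in the kernel of that map, which is usually nonzero. Instead, use the embedding of $E_V$ into $V_T\times(\g\oplus V)_{\mathrm P}\cong V_T\times E_{-1,S}$ from the proof of Theorem~\ref{thm:operator-post}, after adjoining $c_0$ if needed. This settles the claim at once, since both factors have ternary product equal to the double bracket.
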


\begin{proof}
In the first case, the double is an abelian semidirect
extension of $(A,B,C)$, and $T_A$ satisfies the ordinary operator
equations. The representation identities are the
pre-Lie--Yamaguti identities of \cite{ShengZhao2022}.
Conversely, those identities give this double.
For the second claim, apply Theorem~\ref{thm:operator-post}
to Proposition~\ref{prop:Lie} and use \eqref{eq:Lie-descent}.
\end{proof}

\section{Extended Lie--Yamaguti Yang--Baxter equation}
\label{sec:yang-baxter}

We define the balanced extended Lie--Yamaguti Yang--Baxter
equation and relate it to extended $\mathcal O$-operators for
the coadjoint representation. At zero mass, it is the classical
equation of \cite{ZhaoQiaoYB2023}. We also treat skew modified
operators on quadratic algebras, maximal isotropic graphs and
skew deformations. Here $\g$ is finite dimensional, and the
symmetric extension tensor is fixed.

Let $(\rho^\vee,\mu^\vee)$ be the coadjoint representation,
with associated map $D^\vee$. We use the signs
\begin{align}
 \langle\rho^\vee(x)\xi,z\rangle
   &=-\langle\xi,[x,z]\rangle,\nonumber\\
 \langle\mu^\vee(x,y)\xi,z\rangle
   &=\langle\xi,\tri{z}{y}{x}\rangle,\label{eq:coadjoint}\\
 \langle D^\vee(x,y)\xi,z\rangle
   &=-\langle\xi,\tri{x}{y}{z}\rangle.\nonumber
\end{align}
Thus $\mu^\vee=-\mu_{\rm ad}^*\tau$, where the star means
negative transpose and $\tau$ swaps the two algebra inputs.
This swap is required.

For $r\in\wedge^2\g$ and $s\in\operatorname{Sym}^2\g$ set
\begin{equation}\label{eq:sharp-convention}
 T=r^\sharp,\quad S=s^\sharp,\qquad
 \langle r^\sharp\xi,\eta\rangle
       =\langle r,\xi\otimes\eta\rangle,\qquad
 \langle s^\sharp\xi,\eta\rangle
       =\langle s,\xi\otimes\eta\rangle.
\end{equation}
Call $s$ admissible if $S$ satisfies
\eqref{eq:eqv}--\eqref{eq:bal32} for $(\rho^\vee,\mu^\vee)$,
including when $s$ is degenerate. Set
\[
 c_S(\xi,\eta,\zeta)
     =\mu^\vee(S\eta,S\zeta)\xi
     =D^\vee(S\xi,S\eta)\zeta.
\]

Use leg notation, with products in the common leg. Set
\begin{align}
 \mathsf Y_2(r)
  &=[r_{12},r_{13}]+[r_{12},r_{23}]+[r_{13},r_{23}],
 \label{eq:Y2}\\
 \mathsf Y_3(r)
  &=-\bigl(\tri{r_{31}}{r_{32}}{r_{43}}
          +\tri{r_{13}}{r_{41}}{r_{12}}
          +\tri{r_{42}}{r_{23}}{r_{21}}
          +\tri{r_{41}}{r_{42}}{r_{43}}\bigr).
 \label{eq:Y3}
\end{align}
Here $r_{ab}=\sum_i x_i^{(a)}y_i^{(b)}$ for
$r=\sum_i x_i\otimes y_i$. The minus sign in \eqref{eq:Y3}
matches the first-leg convention \eqref{eq:sharp-convention}.
It leaves $\mathsf Y_3(r)=0$ unchanged.
Equivalently, the contraction formulas are
\begin{align}
 \langle\mathsf Y_2(r),\xi\otimes\eta\otimes\theta\rangle
  &=\langle\theta,[T\xi,T\eta]\rangle
    +\langle\xi,[T\eta,T\theta]\rangle
    +\langle\eta,[T\theta,T\xi]\rangle,
 \label{eq:Y2-contraction}\\
 \langle\mathsf Y_3(r),\xi\otimes\eta\otimes\zeta\otimes\theta\rangle
  &=\langle\theta,\tri{T\xi}{T\eta}{T\zeta}\rangle
    +\langle\xi,\tri{T\theta}{T\zeta}{T\eta}\rangle\nonumber\\
  &\quad-\langle\eta,\tri{T\theta}{T\zeta}{T\xi}\rangle
    -\langle\zeta,\tri{T\xi}{T\eta}{T\theta}\rangle.
 \label{eq:Y3-contraction}
\end{align}

Define the correction tensors
$\mathsf M_2(s)\in\g^{\otimes3}$ and
$\mathsf M_3(r,s)\in\g^{\otimes4}$ by
\begin{align}
 \langle\mathsf M_2(s),\xi\otimes\eta\otimes\theta\rangle
  &=\langle\theta,[S\xi,S\eta]\rangle,
 \label{eq:M2}\\
 \langle\mathsf M_3(r,s),\xi\otimes\eta\otimes\zeta\otimes\theta\rangle
  &=\big\langle\theta,
       \tri{T\xi}{S\eta}{S\zeta}
      +\tri{S\xi}{T\eta}{S\zeta}
      +\tri{S\xi}{S\eta}{T\zeta}
      -T c_S(\xi,\eta,\zeta)\big\rangle.
 \label{eq:M3}
\end{align}
These formulas fix the tensors and their normalizations.
The tensor $\mathsf M_3$ is linear in $r$ and quadratic in $s$.

\begin{definition}\label{def:extended-ybe}
Fix an admissible symmetric tensor $s$ and a mass $\kappa\in\K$.
The \emph{balanced extended Lie--Yamaguti Yang--Baxter equation}
for $r\in\wedge^2\g$ is
\begin{equation}\label{eq:extended-ybe}
 \boxed{\quad\mathsf Y_2(r)=\kappa\mathsf M_2(s),\qquad
        \mathsf Y_3(r)=\kappa\mathsf M_3(r,s).\quad}
\end{equation}
\end{definition}

\begin{theorem}\label{thm:ybe-operator}
A skew tensor $r$ solves \eqref{eq:extended-ybe} exactly when
$T=r^\sharp$ is an extended $\mathcal O$-operator of mass $\kappa$
with extension $S=s^\sharp$ for the coadjoint representation
\eqref{eq:coadjoint}. Equivalently, $\Gr(r^\sharp)$ is a
Lie--Yamaguti subalgebra of $E_{\kappa,S}=\g\oplus\g^*$.
The tensor $r$ may be degenerate.
\end{theorem}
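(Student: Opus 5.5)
The plan is to pair both sides of the operator equations \eqref{eq:operator2}--\eqref{eq:operator3} with a test covector $\theta$ and compare them with the contraction formulas \eqref{eq:Y2-contraction}, \eqref{eq:Y3-contraction}, \eqref{eq:M2} and \eqref{eq:M3}. The equivalence with the graph condition then comes for free from Theorem~\ref{thm:graph}, which applies because $s$ is admissible, so $S=s^\sharp$ is balanced equivariant for $(\rho^\vee,\mu^\vee)$. Since the argument only contracts against covectors and never inverts $r^\sharp$ or $s^\sharp$, degeneracy of $r$ or $s$ plays no role.

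For the binary part I would first use skewness of $r$ to get $\langle\xi,T\eta\rangle=-\langle\eta,T\xi\rangle$. Then, for $\xi,\eta,\theta\in\g^*$,
\[
 \langle\theta,T\rho^\vee(T\xi)\eta\rangle=-\langle\rho^\vee(T\xi)\eta,T\theta\rangle=\langle\eta,[T\xi,T\theta]\rangle .
\]
Together with the analogous computation for $\rho^\vee(T\eta)\xi$, pairing the left side of \eqref{eq:operator2} with $\theta$ reproduces the three terms of \eqref{eq:Y2-contraction}. The right side $\kappa\langle\theta,[S\xi,S\eta]\rangle$ is exactly $\kappa\mathsf M_2(s)$ by \eqref{eq:M2}. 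Since $\theta$ is arbitrary, \eqref{eq:operator2} holds if and only if $\mathsf Y_2(r)=\kappa\mathsf M_2(s)$.

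The ternary part follows the same pattern. I pair the left side of \eqref{eq:operator3} at $(\xi,\eta,\zeta)$ with $\theta$. The term $\langle\theta,\tri{T\xi}{T\eta}{T\zeta}\rangle$ is already in the required form. For the three module terms I move $T$ across using skewness and then unfold \eqref{eq:coadjoint}:
\begin{align*}
 -\langle\theta,T\mu^\vee(T\eta,T\zeta)\xi\rangle&=\langle\xi,\tri{T\theta}{T\zeta}{T\eta}\rangle,\\
 \langle\theta,T\mu^\vee(T\xi,T\zeta)\eta\rangle&=-\langle\eta,\tri{T\theta}{T\zeta}{T\xi}\rangle,\\
 -\langle\theta,TD^\vee(T\xi,T\eta)\zeta\rangle&=-\langle\zeta,\tri{T\xi}{T\eta}{T\theta}\rangle.
\end{align*}
These match \eqref{eq:Y3-contraction} term by term. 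The right side of \eqref{eq:operator3}, paired with $\theta$, is by definition \eqref{eq:M3} contracted on $\xi\otimes\eta\otimes\zeta\otimes\theta$, provided $c(\xi,\eta,\zeta)$ is identified with $c_S$. That identification is the formula $\mu^\vee(S\eta,S\zeta)\xi$ of Lemma~\ref{lem:background}, and the equality with $D^\vee(S\xi,S\eta)\zeta$ is \eqref{eq:Dlift}.

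The main obstacle is the sign and ordering bookkeeping in the ternary contraction. The swap $\tau$ built into $\mu^\vee$ and the minus sign in $D^\vee$ have to produce exactly the argument orders $\tri{T\theta}{T\zeta}{T\eta}$ and $\tri{T\theta}{T\zeta}{T\xi}$ in \eqref{eq:Y3-contraction}, with the stated signs. I would check each of the three terms separately in the way displayed above. If a discrepancy appears, I would resolve it with the skew-symmetry of the first two ternary arguments rather than with any Lie--Yamaguti identity, since the stated contraction formula is meant to be a direct transcription of the operator equation.

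I would also confirm that \eqref{eq:Y3-contraction} is indeed the contraction of the leg expression \eqref{eq:Y3}, overall minus sign included. Since the theorem is phrased through the contraction formulas, which the paper gives as equivalent, I would take that equivalence as given and only spot-check one leg term against it.
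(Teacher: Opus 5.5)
Your proposal is correct and follows the paper's proof essentially step for step. You pair with a test covector $\theta$, move $T$ across by skewness and unfold \eqref{eq:coadjoint}, which yields exactly the binary identity and the three ternary identities the paper uses; \eqref{eq:M2}--\eqref{eq:M3} and Theorem~\ref{thm:graph} then finish the argument without inverting $T$ or $S$.

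The only difference is that the paper does not take the passage from the leg formulas to the contraction formulas as given. It verifies all four leg products in \eqref{eq:Y3}, and implicitly those in \eqref{eq:Y2}, using $T\xi=\sum_i\langle\xi,x_i\rangle y_i$ and $\sum_i\langle\xi,y_i\rangle x_i=-T\xi$. You should carry out that full check rather than spot-checking a single term, because $\mathsf Y_2$ and $\mathsf Y_3$ are defined by the leg expressions.
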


\begin{proof}
Write $r=\sum_i x_i\otimes y_i$. Since $r$ is skew,
for every $\xi\in\g^*$,
\[
 T\xi=\sum_i\langle\xi,x_i\rangle y_i,\qquad
 \sum_i\langle\xi,y_i\rangle x_i=-T\xi.
\]
These give \eqref{eq:Y2-contraction}. The four ternary leg
products give, in order,
\begin{align*}
 \langle\tri{r_{31}}{r_{32}}{r_{43}},
                  \xi\otimes\eta\otimes\zeta\otimes\theta\rangle
   &=\langle\zeta,\tri{T\xi}{T\eta}{T\theta}\rangle,\\
 \langle\tri{r_{13}}{r_{41}}{r_{12}},
                  \xi\otimes\eta\otimes\zeta\otimes\theta\rangle
   &=-\langle\xi,\tri{T\theta}{T\zeta}{T\eta}\rangle,\\
 \langle\tri{r_{42}}{r_{23}}{r_{21}},
                  \xi\otimes\eta\otimes\zeta\otimes\theta\rangle
   &=\langle\eta,\tri{T\theta}{T\zeta}{T\xi}\rangle,\\
 \langle\tri{r_{41}}{r_{42}}{r_{43}},
                  \xi\otimes\eta\otimes\zeta\otimes\theta\rangle
   &=-\langle\theta,\tri{T\xi}{T\eta}{T\zeta}\rangle.
\end{align*}
The negative of their sum is \eqref{eq:Y3-contraction}.

Compare these with the operator equations.
Skew-symmetry of $T$ and the coadjoint formula give
\begin{align*}
 &\langle\theta,T(\rho^\vee(T\xi)\eta
                           -\rho^\vee(T\eta)\xi)\rangle\\
 &=-\langle\rho^\vee(T\xi)\eta-\rho^\vee(T\eta)\xi,
                                                    T\theta\rangle\\
 &=\langle\eta,[T\xi,T\theta]\rangle
                         -\langle\xi,[T\eta,T\theta]\rangle.
\end{align*}
Hence
\[
 \langle\mathsf Y_2(r),\xi\otimes\eta\otimes\theta\rangle
   =\langle\theta,[T\xi,T\eta]
       -T(\rho^\vee(T\xi)\eta-\rho^\vee(T\eta)\xi)\rangle.
\]
For the ternary terms, the same calculation gives
\begin{align*}
 \langle\theta,T\mu^\vee(T\eta,T\zeta)\xi\rangle
   &=-\langle\xi,\tri{T\theta}{T\zeta}{T\eta}\rangle,\\
 \langle\theta,T\mu^\vee(T\xi,T\zeta)\eta\rangle
   &=-\langle\eta,\tri{T\theta}{T\zeta}{T\xi}\rangle,\\
 \langle\theta,TD^\vee(T\xi,T\eta)\zeta\rangle
   &=\langle\zeta,\tri{T\xi}{T\eta}{T\theta}\rangle.
\end{align*}
By \eqref{eq:Y3-contraction}, the result is the pairing of
$\theta$ with
\[
 \tri{T\xi}{T\eta}{T\zeta}
 -T\bigl(\mu^\vee(T\eta,T\zeta)\xi
         -\mu^\vee(T\xi,T\zeta)\eta
         +D^\vee(T\xi,T\eta)\zeta\bigr).
\]
The tensors in \eqref{eq:M2}--\eqref{eq:M3} pair with the right
sides of the extended $\mathcal O$-operator equations.
Thus \eqref{eq:extended-ybe} is equivalent to
\eqref{eq:operator2}--\eqref{eq:operator3} for $T$.
Pairing with all covectors suffices; $T^{-1}$ is not needed.
The graph result follows from Theorem~\ref{thm:graph}.
\end{proof}

If $\kappa=0$ or $s=0$, this is the classical skew
Lie--Yamaguti Yang--Baxter equation. If $a^2=-\kappa$,
Proposition~\ref{prop:roots} gives the homomorphisms
\begin{equation}\label{eq:ybe-shifts}
 r^\sharp\pm a s^\sharp:(\g^*)_T\longrightarrow\g.
\end{equation}
This requires the three mixed ternary terms and $-Tc_S$
in \eqref{eq:M3}. A binary correction alone need not suffice.

We next treat quadratic Lie--Yamaguti algebras.

A quadratic Lie--Yamaguti algebra has a nondegenerate
symmetric form $\mathcal B$ satisfying
\begin{equation}\label{eq:quadratic-invariance}
 \mathcal B([x,y],z)=-\mathcal B(y,[x,z]),\qquad
 \mathcal B(\tri{x}{y}{z},w)
                 =\mathcal B(x,\tri{w}{z}{y}).
\end{equation}
Set $\mathcal B^\flat x=\mathcal B(x,\cdot)$.
Let $s_{\mathcal B}$ be the symmetric inverse tensor, with
$s_{\mathcal B}^\sharp=(\mathcal B^\flat)^{-1}$.

\begin{proposition}\label{prop:quadratic-ybe}
The tensor $s_{\mathcal B}$ is admissible.
For $r\in\wedge^2\g$, set $R=r^\sharp\mathcal B^\flat$.
Then \eqref{eq:extended-ybe} with $s=s_{\mathcal B}$ is
equivalent to $\mathcal B(Rx,y)=-\mathcal B(x,Ry)$ and
\begin{align}
 &[Rx,Ry]-R([Rx,y]+[x,Ry])=\kappa[x,y],
 \label{eq:quadratic-mrb2}\\
 &\tri{Rx}{Ry}{Rz}
 -R\bigl(\tri{x}{Ry}{Rz}+\tri{Rx}{y}{Rz}
                         +\tri{Rx}{Ry}{z}\bigr)\nonumber\\
 &\qquad=\kappa\bigl(\tri{Rx}{y}{z}+\tri{x}{Ry}{z}
                  +\tri{x}{y}{Rz}-R\tri{x}{y}{z}\bigr).
 \label{eq:quadratic-mrb3}
\end{align}
\end{proposition}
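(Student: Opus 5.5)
The plan is to transport everything along the isomorphism $\mathcal B^\flat:\g\to\g^*$. By invariance it intertwines the adjoint representation with the coadjoint one, and then Theorem~\ref{thm:ybe-operator} turns the tensor equations into operator equations on $\g$.

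\emph{Step 1: intertwining.} Set $\Phi=\mathcal B^\flat$. I will show $\Phi\circ\rho_{\rm ad}(x)=\rho^\vee(x)\circ\Phi$ and $\Phi\circ\mu_{\rm ad}(x,y)=\mu^\vee(x,y)\circ\Phi$, with the adjoint data \eqref{eq:ad}. For $\rho$, pair with $z$:
\[
 \langle\Phi[x,u],z\rangle=\mathcal B([x,u],z)=-\mathcal B(u,[x,z])=\langle\rho^\vee(x)\Phi u,z\rangle .
\]
For $\mu$ we need $\mathcal B(\tri{u}{x}{y},z)=\mathcal B(u,\tri{z}{y}{x})$. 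This is exactly the second relation in \eqref{eq:quadratic-invariance}, and it matches the swapped convention in \eqref{eq:coadjoint}. It then follows that $\Phi D_{\rm ad}=D^\vee\Phi$.

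\emph{Step 2: admissibility.} Since $S=s_{\mathcal B}^\sharp=\Phi^{-1}$, the intertwining gives
\[
 S\rho^\vee(x)\xi=\rho_{\rm ad}(x)S\xi=[x,S\xi],\qquad S\mu^\vee(x,y)\xi=\tri{S\xi}{x}{y}.
\]
The balance conditions \eqref{eq:bal2}--\eqref{eq:bal32} for the coadjoint data are conjugate under $\Phi$ to the balance conditions for the adjoint data with $S=\id$. Those hold by the example following Definition~\ref{def:balanced}, namely skew-symmetry of the products together with $D(x,y)u=\tri{x}{y}{u}$. So $s_{\mathcal B}$ is admissible. In addition, $c_S$ corresponds to $c(u,v,w)=\tri{u}{v}{w}$.

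\emph{Step 3: transport of the operator equations.} By Theorem~\ref{thm:ybe-operator}, \eqref{eq:extended-ybe} is equivalent to $T=r^\sharp$ being an extended $\mathcal O$-operator for $(\g^*;\rho^\vee,\mu^\vee)$ with extension $\Phi^{-1}$. Substitute $\xi=\Phi x$, $\eta=\Phi y$, $\zeta=\Phi z$, so that $T\xi=Rx$ and $S\xi=x$. Then:
\begin{itemize}
 \item $T\rho^\vee(T\xi)\eta=R\,\Phi^{-1}\rho^\vee(Rx)\Phi y=R[Rx,y]$;
 \item $T\mu^\vee(T\eta,T\zeta)\xi=R\tri{x}{Ry}{Rz}$, and similarly for the other $\mu$ and $D$ terms;
 \item $Tc_S(\xi,\eta,\zeta)=R\tri{x}{y}{z}$.
\end{itemize}
Equations \eqref{eq:operator2}--\eqref{eq:operator3} therefore become \eqref{eq:quadratic-mrb2}--\eqref{eq:quadratic-mrb3} literally. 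For the ternary left side one uses $-\mu(Rx,Rz)y=\tri{Rx}{y}{Rz}$, which holds by skew-symmetry.

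\emph{Step 4: skewness.} Finally, $r\in\wedge^2\g$ is equivalent to $\mathcal B(Rx,y)=-\mathcal B(x,Ry)$, since
\[
 \mathcal B(Rx,y)=\langle\Phi y,r^\sharp\Phi x\rangle=\langle r,\Phi x\otimes\Phi y\rangle .
\]
As $R$ ranges over $\mathcal B$-skew maps, this recovers all skew $r$, which gives the stated equivalence.

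The main obstacle is sign and argument-order bookkeeping in Step 1 for $\mu$ and $D$. One has to confirm that the swap $\tau$ in \eqref{eq:coadjoint} matches the form of the invariance \eqref{eq:quadratic-invariance}. One also has to confirm that $D^\vee$ as defined agrees with the $D$ built from $(\rho^\vee,\mu^\vee)$ by \eqref{eq:D}; I would check this by transporting \eqref{eq:D} through $\Phi$.
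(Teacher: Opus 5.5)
Your proposal is correct and follows the paper's proof essentially step for step. Both arguments use \eqref{eq:quadratic-invariance} so that $\mathcal B^\flat$ intertwines the adjoint and coadjoint data, deduce admissibility from the adjoint case with $S=\id$, and substitute $\xi=\mathcal B^\flat x$, $\eta=\mathcal B^\flat y$, $\zeta=\mathcal B^\flat z$ into Theorem~\ref{thm:ybe-operator} to obtain \eqref{eq:quadratic-mrb2}--\eqref{eq:quadratic-mrb3}, with $\mathcal B$-skewness of $R$ read off from skewness of $r$. Your added check is that $D^\vee=\mathcal B^\flat D_{\rm ad}(\mathcal B^\flat)^{-1}$, since $D$ is built from $(\rho,\mu)$ by \eqref{eq:D}; the paper leaves this implicit.
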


\begin{proof}
For all $x,y,z,w\in\g$, invariance gives
\begin{align*}
 \langle\mathcal B^\flat\rho_{\rm ad}(x)z,w\rangle
  &=\mathcal B([x,z],w)
   =-\mathcal B(z,[x,w])
   =\langle\rho^\vee(x)\mathcal B^\flat z,w\rangle,\\
 \langle\mathcal B^\flat\mu_{\rm ad}(x,y)z,w\rangle
  &=\mathcal B(\tri{z}{x}{y},w)
   =\mathcal B(z,\tri{w}{y}{x})
   =\langle\mu^\vee(x,y)\mathcal B^\flat z,w\rangle.
\end{align*}
Thus $\mathcal B^\flat$ identifies the adjoint and coadjoint
representations. It sends $S=(\mathcal B^\flat)^{-1}$ to
$\id_\g$, which is balanced equivariant for the adjoint
representation. Hence $s_{\mathcal B}$ is admissible.

Also,
\[
 \mathcal B(Rx,y)
   =\langle r,\mathcal B^\flat x\otimes\mathcal B^\flat y\rangle
   =-\langle r,\mathcal B^\flat y\otimes\mathcal B^\flat x\rangle
   =-\mathcal B(x,Ry).
\]
In Theorem~\ref{thm:ybe-operator}, substitute
$\xi=\mathcal B^\flat x$, $\eta=\mathcal B^\flat y$ and
$\zeta=\mathcal B^\flat z$. Then $T\xi=Rx$ and $S\xi=x$.
The binary input is $\mathcal B^\flat([Rx,y]+[x,Ry])$.
The ternary input is
\[
 \mathcal B^\flat\bigl(\tri{x}{Ry}{Rz}
            +\tri{Rx}{y}{Rz}+\tri{Rx}{Ry}{z}\bigr).
\]
Also, $c_S(\xi,\eta,\zeta)=\mathcal B^\flat\tri{x}{y}{z}$.
This gives \eqref{eq:quadratic-mrb2}--\eqref{eq:quadratic-mrb3}.
The substitutions are invertible, so the converse holds.
\end{proof}

\begin{theorem}\label{thm:lagrangian-graphs}
For $S=(\mathcal B^\flat)^{-1}$, the double
$E_{\kappa,S}$ is quadratic for the canonical pairing
\begin{equation}\label{eq:double-pairing}
 \mathcal Q((x,\xi),(y,\eta))
                   =\langle\xi,y\rangle+\langle\eta,x\rangle.
\end{equation}
Its maximal isotropic subalgebras complementary to
$\g\oplus0$ are exactly the graphs of solutions of
\eqref{eq:extended-ybe}.
\end{theorem}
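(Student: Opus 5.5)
The plan has two parts: first show that $\mathcal Q$ is invariant on $E_{\kappa,S}$, then identify the maximal isotropic subalgebras complementary to $\g\oplus0$. The cleanest route to invariance is to transport the problem to the adjoint model. Proposition~\ref{prop:quadratic-ybe} shows that $\mathcal B^\flat:\g\to\g^*$ intertwines the adjoint and coadjoint representations and sends $S=(\mathcal B^\flat)^{-1}$ to $\id_\g$. Hence $\Phi=\id\oplus\mathcal B^\flat:\g\oplus\g\to\g\oplus\g^*$ is an isomorphism from the double for the adjoint data with $S=\id$ onto $E_{\kappa,S}$. This follows by comparing \eqref{eq:double2}--\eqref{eq:double3} term by term, using equivariance of $\mathcal B^\flat$ and $c_S=\mathcal B^\flat\circ\{\cdot,\cdot,\cdot\}$. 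Under $\Phi$, the pairing $\mathcal Q$ pulls back to
\[
 \mathcal Q'((x,u),(y,v))=\mathcal B(u,y)+\mathcal B(v,x).
\]

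Next, check that $\mathcal Q'$ satisfies the two identities \eqref{eq:quadratic-invariance} for the adjoint double products. By multilinearity, split the check according to how many inputs lie in the second summand. Only placements in which the pairing is nonzero matter, and these have exactly one (binary) or one or three (ternary) second-summand entries on the two sides combined.

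In each case the identity reduces to invariance of $\mathcal B$ on $\g$. For example, the binary one-$V$ case reads $\mathcal B([x,v],z)=-\mathcal B(v,[x,z])$. The $\kappa$-terms need separate attention, for instance $-\kappa\mathcal B([u,v],z)$ against $-\kappa\mathcal B(v,[u,z])$; these again follow from invariance of $\mathcal B$. For the ternary identity, the $\kappa$-correction terms involving $\tri{x}{v}{w}$ and similar must be matched using \eqref{eq:quadratic-invariance} together with the skew-symmetry and the LY1-derived form $D(x,y)u=\tri{x}{y}{u}$. Nondegeneracy of $\mathcal Q$ is clear. I expect this case-by-case bookkeeping of the ternary invariance, especially the terms with three second-summand inputs, to be the main obstacle. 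If needed, I would alternatively verify it directly via the coadjoint formulas \eqref{eq:coadjoint}.

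For the subalgebra statement, a subspace $L$ complementary to $\g\oplus0$ is a graph $\Gr(T)$ of a unique linear $T:\g^*\to\g$: project onto the second factor, which is bijective on $L$. Compute
\[
 \mathcal Q((T\xi,\xi),(T\eta,\eta))=\langle\xi,T\eta\rangle+\langle\eta,T\xi\rangle .
\]
This vanishes for all $\xi,\eta$ exactly when $T=r^\sharp$ for a skew $r$. Since $\dim\Gr(T)=\dim\g^*$ is half of $\dim E_{\kappa,S}$, isotropic is the same as maximal isotropic here.

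Finally, by Theorem~\ref{thm:ybe-operator}, for skew $r$ the graph $\Gr(r^\sharp)$ is a subalgebra exactly when $r$ solves \eqref{eq:extended-ybe}. Combining the two directions gives the claimed bijection. A maximal isotropic subspace complementary to $\g\oplus0$ has dimension $\dim\g$, so it is a graph. Conversely, every solution gives such a graph.
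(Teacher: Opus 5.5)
Your proposal is correct. Its second half is exactly the paper's argument: complements to $\g\oplus0$ are graphs, isotropy of $\Gr(T)$ is skew-symmetry of $T$, half-dimensionality gives maximality, and closure comes from Theorem~\ref{thm:ybe-operator}. The difference lies in how you prove invariance of $\mathcal Q$.

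Both you and the paper first identify $\xi=\mathcal B^\flat u$, which reduces to the adjoint double with $S=\id$. The paper then uses the isomorphism \eqref{eq:current-map} of that double with $\g\otimes A_\kappa$, where $A_\kappa=\K[\varepsilon]/(\varepsilon^2+\kappa)$. There $\mathcal Q$ becomes $\mathcal B\otimes q$, with $q(a,b)=\lambda(ab)$, $\lambda(1)=0$ and $\lambda(\varepsilon)=1$. Both invariance identities then follow in one line from invariance of $\mathcal B$ and the fact that $A_\kappa$ is commutative and associative; for instance, $\mathcal B([x,y],z)\lambda(abc)=-\mathcal B(y,[x,z])\lambda(bac)$. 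Nondegeneracy for every $\kappa$ is read off from the antidiagonal matrix of $q$ on $(1,\varepsilon)$.

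Your case-by-case check is valid, and it is precisely the expansion of $\lambda(abc)$ and $\lambda(abcd)$ written out by hand. Your parity rule is the statement $\lambda(1)=\lambda(\varepsilon^2)=0$, and your $\kappa$-terms come from $\lambda(\varepsilon^3)=-\kappa$. So your route costs bookkeeping but needs no machinery beyond Proposition~\ref{prop:quadratic-ybe}. The paper's route handles all placements and all $\kappa$-corrections at once.

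There is one slip in your count. In the binary identity, placements with three second-summand inputs also contribute, so the count there is ``one or three'' as in the ternary case. Your own example $-\kappa\mathcal B([u,v],z)$ is such a placement. In that example, the right side $-\mathcal Q'(Y,[X,Z])$ equals $+\kappa\mathcal B(v,[u,z])$, not $-\kappa\mathcal B(v,[u,z])$; with that sign, it matches the left side by invariance of $\mathcal B$.
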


\begin{proof}
Identify $\xi=\mathcal B^\flat u$. The double becomes
$\g\otimes A_\kappa$, where
$A_\kappa=\K[\varepsilon]/(\varepsilon^2+\kappa)$.
The form \eqref{eq:double-pairing} is
$\mathcal B\otimes q$, with
$q(a,b)=\lambda(ab)$, $\lambda(1)=0$,
$\lambda(\varepsilon)=1$. Its matrix on
$(1,\varepsilon)$ is $\left(\begin{smallmatrix}0&1\\1&0\end{smallmatrix}\right)$.
It is nondegenerate for every $\kappa$. The algebra $A_\kappa$
is associative and commutative. Together with
\eqref{eq:quadratic-invariance}, this gives invariance for
both products of the double.

Every complement to $\g\oplus0$ is the graph of a unique
$T:\g^*\to\g$. On the graph,
$\mathcal Q(j_T\xi,j_T\eta)=\langle\xi,T\eta\rangle+
\langle\eta,T\xi\rangle$. This vanishes exactly when
$T=r^\sharp$ for a skew tensor. The dimension count then gives
maximal isotropy. Closure under both products follows from
Theorem~\ref{thm:ybe-operator}.
\end{proof}

\begin{remark}\label{rem:manin-limitation}
Two complementary maximal isotropic subalgebras need not
satisfy the Lie--Yamaguti Manin-triple conditions of
\cite{ZhaoQiaoYB2023}. For $W=\g\oplus0$ and $Z=\Gr(T)$,
these also require
\[
 \tri{W}{W}{Z},\ \tri{W}{Z}{W}\subseteq Z,
 \qquad
 \tri{Z}{Z}{W},\ \tri{Z}{W}{Z}\subseteq W.
\]
Check these mixed ternary inclusions separately.
Equation~\eqref{eq:extended-ybe} alone need not give a
local-cocycle or double-construction Lie--Yamaguti bialgebra.
\end{remark}

We now give the Maurer--Cartan equation and skew deformations.

By Theorem~\ref{thm:ybe-operator} and
Section~\ref{sec:cohomology}, solutions correspond to skew
Maurer--Cartan elements $T:\g^*\to\g$.
For deformations of $r$, the low-degree complex must retain
skew-symmetry. An unrestricted operator correction need not
be skew.

For a solution $T=r^\sharp$, let $A^{\mathsf t}$ be
the ordinary transpose on $\g^*$. Define
\begin{align}
 K_T^0&=\{A\in\Der(\g):AS+SA^{\mathsf t}=0\},\nonumber\\
 K_T^1&=\{f:\g^*\to\g:
             \langle f\xi,\eta\rangle+
             \langle\xi,f\eta\rangle=0\},
 \label{eq:skew-complex-spaces}\\
 d_0A&=AT+TA^{\mathsf t}.\nonumber
\end{align}
Use the normal representation of $T$ and the cyclic map
\eqref{eq:delta-starI}--\eqref{eq:delta-starII} to form
\begin{equation}\label{eq:skew-complex}
 K_T^0\xrightarrow{d_0}K_T^1
 \xrightarrow{\delta_T}C_T^2
 \xrightarrow{(\delta_T^*,\delta_T)}W_T\oplus C_T^3
 \xrightarrow{(0,\delta_T)}C_T^4\longrightarrow\cdots.
\end{equation}
Denote its first and second cohomology groups by
$H^1_{r,s}$ and $H^2_{r,s}$.

\begin{proposition}\label{prop:skew-deformations}
Fix $(\g,s,\kappa)$. Infinitesimal skew solutions $r+t\dot r$
of \eqref{eq:extended-ybe}, modulo
$r_t\mapsto(\Phi_t\otimes\Phi_t)r_t$ with
$\Phi_t\in\Aut_{\rm LY}(\g)$, $(\Phi_t\otimes\Phi_t)s=s$
and $\Phi_0=\id$, are classified by $H^1_{r,s}$.
For a skew deformation of order $N$, \eqref{eq:obstruction}
gives a class in $H^2_{r,s}$. It extends to order $N+1$
through skew tensors exactly when this class vanishes.
\end{proposition}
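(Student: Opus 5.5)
The proof runs the general operator deformation argument of Sections~\ref{sec:cohomology}--\ref{sec:deformations} inside the subcomplex of skew cochains. By Theorem~\ref{thm:ybe-operator}, skew solutions $r_t$ of \eqref{eq:extended-ybe} are exactly skew Maurer--Cartan elements $T_t=r_t^\sharp$ of the curved $L_\infty[1]$-algebra for the coadjoint data with $S=s^\sharp$. So a deformation $r_t=r+t\dot r+\cdots$ is a family $T_t$ with each coefficient in $K_T^1$. The order-one Maurer--Cartan equation says that $\delta_T\dot T=0$ in $C_T^2$, where $\delta_T$ is the unary bracket twisted by $T$. The paired low-degree condition then also requires the two cyclic conditions, encoded by $(\delta_T^*,\delta_T)$ into $W_T\oplus C_T^3$. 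Hence infinitesimal skew solutions are exactly the skew $1$-cocycles in \eqref{eq:skew-complex}. One point to check is that \eqref{eq:skew-complex} is a complex. For $d_0$ this is the paper's general fact that $\delta_T$ applied to the infinitesimal action of a derivation vanishes. The composite $(\delta_T^*,\delta_T)\circ\delta_T=0$ is inherited from the full complex.

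For equivalence, I first check that $r\mapsto(\Phi\otimes\Phi)r$ corresponds to $T\mapsto\Phi T\Phi^{\mathsf t}$. The condition $(\Phi\otimes\Phi)s=s$ means $\Phi S\Phi^{\mathsf t}=S$. Since $\Phi^{-\mathsf t}$ intertwines the coadjoint representation, the pair $(\Phi,\Phi^{-\mathsf t})$ is an automorphism of the double $E_{\kappa,S}$, and it sends graphs to graphs. Differentiating $\Phi_t=\id+tA+O(t^2)$ gives $A\in\Der(\g)$ with $AS+SA^{\mathsf t}=0$, so $A\in K_T^0$, and it gives $\dot T\mapsto\dot T+AT+TA^{\mathsf t}=\dot T+d_0A$. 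Note that $d_0A$ is automatically skew. Conversely, given $A\in K_T^0$, I use $\Phi_t=\exp(tA)$, which is an automorphism fixing $s$ because $A$ is a derivation that annihilates $s$. This realizes any $d_0A$ as a trivial deformation. So equivalence classes of infinitesimals are $\ker\delta_T\cap\ker(\delta_T^*,\delta_T)$ modulo $\operatorname{im}d_0$, which is $H^1_{r,s}$.

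For obstructions, I take an order-$N$ skew solution $T_N=\sum_{k\le N}t^kT_k$. Its order-$(N+1)$ Maurer--Cartan defect is the obstruction \eqref{eq:obstruction}, a sum of brackets of the lower coefficients. By the general result of Section~\ref{sec:deformations}, it is a cocycle: it lies in $C_T^2$ and satisfies both cyclic conditions, so it is closed under $(\delta_T^*,\delta_T)$. The extension equation is $\delta_T T_{N+1}=-\mathrm{Ob}_{N+1}$ together with the cyclic parts. Therefore an extension through a skew $T_{N+1}\in K_T^1$ exists exactly when the class vanishes in the cohomology of \eqref{eq:skew-complex}.

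The main obstacle is skewness. The space $C_T^2$ and the higher spaces are unrestricted, while the term $K_T^1$ is skew, so I must make sure that a class vanishing in the skew complex really produces a skew primitive. That is built into the definition, since $H^2_{r,s}$ uses $\delta_T(K_T^1)$ as coboundaries. The remaining check is that the obstruction built from skew $T_k$ lies in the correct target. I would verify this using the contraction formulas \eqref{eq:Y2-contraction}--\eqref{eq:Y3-contraction}, whose symmetry shows that $\delta_T$ and the obstruction are compatible with transposition.
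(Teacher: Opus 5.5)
Your proposal is correct and follows the paper's own route. You identify the tensor action with $T\mapsto\Phi T\Phi^{\mathsf t}$ and its infinitesimal symmetry pair $(A,-A^{\mathsf t})\in\mathfrak s$ with $AS+SA^{\mathsf t}=0$, restrict degrees zero and one of the augmented normal complex to skew maps, and use Theorem~\ref{thm:obstruction} for the cocycle property of $\Ob_{N+1}$ and for solving $\delta_Tf=-\Ob_{N+1}$ with a skew $f$. Two points are cosmetic only. First, the cyclic conditions act on $2$-cochains and hold automatically for $\delta_T\dot T$, so the degree-one cocycles are just $\ker\delta_T$ on $K_T^1$. Second, $C_T^2$ is unrestricted in \eqref{eq:skew-complex}, so your final check that the obstruction lands in a ``skew'' target is unnecessary.
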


\begin{proof}
By \eqref{eq:sharp-convention}, the tensor action is
$T\mapsto\Phi T\Phi^{\mathsf t}$. Its infinitesimal is
$AT+TA^{\mathsf t}$. The representation symmetry is
$(A,-A^{\mathsf t})$. Preserving $s$ requires
$AS+SA^{\mathsf t}=0$. Thus \eqref{eq:skew-complex} restricts
degrees zero and one of the augmented normal complex.
The first-order equation is $\delta_T\dot r^\sharp=0$.
By Theorem~\ref{thm:obstruction}, the obstruction is a cocycle
for $\delta_T$ and $\delta_T^*$. It is a boundary in
\eqref{eq:skew-complex} exactly when $\delta_Tf=-\Ob_{N+1}$
has a skew solution $f$. This is the required tensor correction.
\end{proof}

\begin{example}\label{ex:compact-ybe}
Let $\g=\mathfrak{so}(3)$ have an orthonormal basis
$e_1,e_2,e_3$ with $[e_1,e_2]=e_3$,
$[e_2,e_3]=e_1$, $[e_3,e_1]=e_2$, and set
$\tri{x}{y}{z}=[[x,y],z]$.
Take the standard invariant form $\mathcal B$,
$s=\sum_i e_i\otimes e_i$, $\kappa=1$, and
\[
 r=e_1\otimes e_2-e_2\otimes e_1,
 \qquad Re_1=e_2,\quad Re_2=-e_1,\quad Re_3=0.
\]
For each basis pair,
$[Rx,Ry]-R([Rx,y]+[x,Ry])=[x,y]$.
Proposition~\ref{prop:Lie} gives the ternary equation.
Thus $r$ is a nonzero degenerate solution of
\eqref{eq:extended-ybe}. The induced products are
\[
 [e_3,e_1]_R=-e_1,\qquad [e_3,e_2]_R=-e_2,
 \qquad [e_1,e_2]_R=0,\qquad
 \tri{x}{y}{z}_R=[[x,y]_R,z]_R.
\]
Its graph is maximal isotropic in $E_{1,S}$ for
\eqref{eq:double-pairing}.
\end{example}

\section{Maurer--Cartan characterization and cohomology}
\label{sec:cohomology}

We give the Maurer--Cartan equation and cohomology of extended
$\mathcal O$-operators. The insertion bracket gives a curved
$L_\infty[1]$-algebra. Its twisted differential is the paired
Yamaguti differential for the normal representation of a graph.
We include both cyclic maps in the lowest paired degree.
Degree zero comes from automorphisms of the fixed data.
The algebra, representation, balanced map $S$ and mass $\kappa$
remain fixed.

For a vector space $U$, put
\begin{align}
 \cC^0(U,U)&=\End(U),\nonumber\\
 \cC^p(U,U)
 &=\Hom((\wedge^2U)^{\otimes p},U)
   \oplus\Hom((\wedge^2U)^{\otimes p}\otimes U,U),\qquad p\geq1.
 \label{eq:graded-space}
\end{align}
We use the insertion bracket of \cite{ZhaoQiao2023} with the
following convention. Write $X_i=x_i\wedge y_i$,
$P=(P_{\rm I},P_{\rm II})\in\cC^p$ and
$Q=(Q_{\rm I},Q_{\rm II})\in\cC^q$, with $p,q\geq1$.
Then
\begin{align}
 &(P\circ Q)_{\rm I}(X_1,\ldots,X_{p+q})\nonumber\\
 &=(-1)^{pq}\!
 \sum_{\substack{\sigma\in\Sh(p,q)\\\sigma(p+q)=p+q}}
 \sgn(\sigma)P_{\rm II}
 \bigl(X_{\sigma(1)},\ldots,X_{\sigma(p)},
       Q_{\rm I}(X_{\sigma(p+1)},\ldots,X_{\sigma(p+q)})\bigr)
 \nonumber\\
 &\quad+\sum_{k=1}^p(-1)^{(k-1)q}
       \sum_{\sigma\in\Sh(k-1,q)}\sgn(\sigma)\,
       P_{\rm I}\Bigl(X_{\sigma(1)},\ldots,X_{\sigma(k-1)},\nonumber\\
 &\hspace{5em}
 Q_{\rm II}(X_{\sigma(k)},\ldots,X_{\sigma(k+q-1)},x_{k+q})
          \wedge y_{k+q}\nonumber\\
 &\hspace{5em}
 +x_{k+q}\wedge
 Q_{\rm II}(X_{\sigma(k)},\ldots,X_{\sigma(k+q-1)},y_{k+q}),
 X_{k+q+1},\ldots,X_{p+q}\Bigr),\label{eq:insertI}\\
 &(P\circ Q)_{\rm II}(X_1,\ldots,X_{p+q},z)\nonumber\\
 &=(-1)^{pq}\sum_{\sigma\in\Sh(p,q)}\sgn(\sigma)
 P_{\rm II}\bigl(X_{\sigma(1)},\ldots,X_{\sigma(p)},
 Q_{\rm II}(X_{\sigma(p+1)},\ldots,X_{\sigma(p+q)},z)\bigr)
 \nonumber\\
 &\quad+\sum_{k=1}^p(-1)^{(k-1)q}
       \sum_{\sigma\in\Sh(k-1,q)}\sgn(\sigma)\,
       P_{\rm II}\Bigl(X_{\sigma(1)},\ldots,X_{\sigma(k-1)},\nonumber\\
 &\hspace{5em}
 Q_{\rm II}(X_{\sigma(k)},\ldots,X_{\sigma(k+q-1)},x_{k+q})
          \wedge y_{k+q}\nonumber\\
 &\hspace{5em}
 +x_{k+q}\wedge
 Q_{\rm II}(X_{\sigma(k)},\ldots,X_{\sigma(k+q-1)},y_{k+q}),
 X_{k+q+1},\ldots,X_{p+q},z\Bigr).\label{eq:insertII}
\end{align}
A shuffle preserves the order within each block.
In each inner sum, it permutes only the first $k+q-1$ positions.
For $f\in\End(U)$, $P\circ f$ inserts $f$ into each input,
including the last input of $P_{\rm II}$.
The expression $f\circ P$ applies $f$ to the output.
For endomorphisms, $\circ$ is composition. Set
\begin{equation}\label{eq:graded-bracket}
 [P,Q]_{\rm LY}=P\circ Q-(-1)^{pq}Q\circ P.
\end{equation}
This is the graded Lie bracket of \cite{ZhaoQiao2023}.
For a Lie--Yamaguti structure $\Delta=(\pi,\omega)$,
$[\Delta,\Delta]_{\rm LY}=0$. Its components are twice the
differences in \eqref{eq:LY3} and \eqref{eq:LY4}.

\begin{remark}\label{rem:MC-caution}
For an arbitrary pair $(\pi,\omega)$,
$[\Delta,\Delta]_{\rm LY}=0$ need not imply
\eqref{eq:LY1} or \eqref{eq:LY2}.
Our double satisfies all four identities by
Theorem~\ref{thm:double}. The Maurer--Cartan equation below
describes graph closure in this fixed double.
\end{remark}

Let $E=E_{\kappa,S}$ and let $\Delta$ be its products.
Define $\mathfrak a\subset\cC^\bullet(E,E)$ by lifting maps
on $V$ with values in $\g$. Use $E\to V$ on each input
and $\g\to E$ on the output. Thus
\begin{align}
 \mathfrak a^0&=\Hom(V,\g),\nonumber\\
 \mathfrak a^p
 &=\Hom((\wedge^2V)^{\otimes p},\g)
  \oplus\Hom((\wedge^2V)^{\otimes p}\otimes V,\g),\quad p\geq1.
 \label{eq:a}
\end{align}
Let $P:\cC^\bullet(E,E)\to\mathfrak a$ restrict inputs to $V$
and project the output to $\g$.

\begin{theorem}\label{thm:curved}
The maps
\begin{equation}\label{eq:derived}
 \ell_0=P\Delta,\qquad
 \ell_n(f_1,\ldots,f_n)
 =P[\cdots[[\Delta,\widehat f_1]_{\rm LY},
               \widehat f_2]_{\rm LY},\ldots,\widehat f_n]_{\rm LY}
 \quad(n\geq1)
\end{equation}
make $\mathfrak a$ a curved $L_\infty[1]$-algebra.
Here $\widehat f$ is the lift above. Each $\ell_n$ has degree
one and is graded symmetric. The brackets vanish for $n\geq4$.
The curvature is
\begin{equation}\label{eq:curvature}
 \ell_0=\bigl((u,v)\mapsto-\kappa[Su,Sv],\,0\bigr).
\end{equation}
A degree-zero map $T:V\to\g$ is an extended $\mathcal O$-operator
exactly when
\begin{equation}\label{eq:MC}
 \ell_0+\ell_1(T)+\frac12\ell_2(T,T)
                         +\frac16\ell_3(T,T,T)=0.
\end{equation}
\end{theorem}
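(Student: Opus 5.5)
The plan is Voronov's higher derived bracket construction \cite{Voronov2005}. Its input is a V-data: a graded Lie algebra $L$, an abelian subalgebra $\mathfrak a$, a projection $P:L\to\mathfrak a$ whose kernel is a subalgebra, and an element $\Delta\in\ker$-compatible degree with $[\Delta,\Delta]=0$. The output is a curved $L_\infty[1]$-structure on $\mathfrak a$ given by \eqref{eq:derived}. Here $L=\cC^\bullet(E,E)$ with the insertion bracket \eqref{eq:graded-bracket}, shifted so the brackets have degree one. The sign conventions of \cite{ZhaoQiao2023} fix the shift, and I will not recheck them.

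The first step is to verify the three structural hypotheses for the decomposition $E=\g\oplus V$.
\begin{itemize}
\item $\mathfrak a$ is abelian. For lifts $\widehat f,\widehat g$, each insertion feeds a $\g$-valued output into an input of the other map. But lifts vanish on $\g$-inputs, because they precompose with $E\to V$. So $\widehat f\circ\widehat g=0$, and hence $[\widehat f,\widehat g]_{\rm LY}=0$.
\item $\ker P$ is a subalgebra. An element of $\ker P$ has zero $\g$-component whenever all inputs lie in $V$. In a composite with all inputs in $V$, the inner output is in $V$ or $\g$. If it is in $V$, the outer map is again evaluated on $V$-inputs and projects to zero in $\g$. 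If it is in $\g$, it was already killed by $P$. So the composite stays in $\ker P$. This is the standard argument for complements $\g\oplus V$ with $\g$ a subspace; it uses only the bigrading by number of $V$-inputs versus output type.
\item $\Delta$ is Maurer--Cartan. $[\Delta,\Delta]_{\rm LY}=0$ holds because $E_{\kappa,S}$ satisfies \eqref{eq:LY3}--\eqref{eq:LY4} by Theorem~\ref{thm:double}.
\end{itemize}
Voronov's theorem then gives a curved $L_\infty[1]$-algebra with $\ell_0=P\Delta$.

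The second step is to read off the curvature and the vanishing. $P\Delta$ restricts both products to $V$-inputs and takes the $\g$-component. From \eqref{eq:double2} this is $-\kappa[Su,Sv]$. From \eqref{eq:double3} the $\g$-component on $V$-inputs involves only terms containing $x,y,z$, so it is $0$. This gives \eqref{eq:curvature}.

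For $n\ge4$, $\ell_n$ vanishes by counting. Each bracket with $\widehat f_i$ replaces one $V$-input by a $\g$-valued image. The products are at most trilinear, and every term of $[\cdots[\Delta,\widehat f_1],\ldots,\widehat f_n]$ inserts the $f_i$ into distinct arguments of a single $\pi$ or $\omega$. So four insertions into a ternary component vanish. The outputs of the $f_i$ land in $\g$, and $\widehat f$ ignores $\g$-inputs, so no nested $f\circ f$ terms survive.

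The last step is the Maurer--Cartan identity. For degree zero $T$, the sum in \eqref{eq:MC} equals $P(e^{-\widehat T}\Delta)$, the twisted structure $e^{[\cdot,\widehat T]}\Delta$ projected. This series terminates by the previous step. Concretely, $e^{-\widehat T}$ is conjugation by the automorphism $(x,u)\mapsto(x+Tu,u)$ of $E$, and $P$ of the transported products evaluated on $u,v,w$ equals the $\g$-component of the product of $(Tu,u),(Tv,v),(Tw,w)$ minus $T$ applied to its $V$-component. That is exactly the left side minus the right side of \eqref{eq:operator2} and \eqref{eq:operator3}, as computed in the proof of Theorem~\ref{thm:graph}.

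The main obstacle is sign and normalization bookkeeping. The coefficients $1,\frac12,\frac16$ in \eqref{eq:MC} must match the $\frac1{n!}$ factors of the exponential, and the overall sign of the conjugation must be fixed. I would first check these on the binary component with $\kappa=0$, where the result is the known relative Rota--Baxter case of \cite{ZhaoQiao2024}. I would then check that the ternary component produces the three mixed $\kappa$-terms and $-\kappa Tc$ with the correct multiplicities. Those multiplicities come from the terms of $\omega$ with two $V$-slots.
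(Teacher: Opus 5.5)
Your route is the paper's: Voronov's V-data with the lifted subspace abelian, $\ker P$ a subalgebra and $[\Delta,\Delta]_{\rm LY}=0$ from Theorem~\ref{thm:double}. You read the curvature off \eqref{eq:double2}--\eqref{eq:double3} and identify \eqref{eq:MC} with $P\Delta^T$ via transport by $(x,u)\mapsto(x+Tu,u)$. Those steps are fine. The step that fails as written is the vanishing of $\ell_n$ for $n\geq4$.

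You assert that every term of $[\cdots[\Delta,\widehat f_1]_{\rm LY},\ldots,\widehat f_n]_{\rm LY}$ inserts the $f_i$ into arguments of $\pi$ or $\omega$. This is false. Each bracket also contains a part $\widehat f_i\circ(\cdots)$, in which the output of $\Delta$ is fed into a lift; in degree zero, $\widehat f$ is simply applied to the output. These are the terms $\kappa f(c(u,v,w))$, $-f(\rho(gu)v-\rho(gv)u)$ and $-f_{\sigma(1)}(\cdots)$ in \eqref{eq:l1}--\eqref{eq:l3}. They are also the factor $\id-\widehat T$ in your own last step.

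Lifts cannot compose with one another. So a nonzero term is $\Delta$ with lifts in some of its inputs and at most one lift on its output. For $\omega$ this allows four lifts: three in the inputs and one on the output. Your count does not exclude this term. It vanishes only because $\g\oplus0$ is a subalgebra of $E_{\kappa,S}$. By \eqref{eq:double3}, the ternary product of three elements of $\g\oplus0$ has zero $V$-component, and the outer lift kills $\g$. For $\pi$ at most $2+1=3$ lifts occur. This is exactly the case the paper's proof treats, and you must add it.

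Without this fact, an order-four term in $T$ would remain in $P\Delta^T$. That term is $-\widehat T$ applied to the ternary product of $(Tu,0),(Tv,0),(Tw,0)$. So the termination of the exponential series you invoke in the last step also rests on this subalgebra property.
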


\begin{proof}
The lifted subspace is abelian. Its outputs lie in $\g$,
so inserting them into another lifted cochain gives zero.
The kernel of $P$ is a graded Lie subalgebra: insertion
preserves cochains that send inputs in $V$ to $V$.
The double identities give $[\Delta,\Delta]_{\rm LY}=0$.
Thus the curved higher derived bracket theorem
\cite{Voronov2005} applies. It does not require $P\Delta=0$.
Evaluating $\Delta$ on $V$ gives \eqref{eq:curvature}.

Each product has at most three inputs. Two lifted cochains
cannot compose, so every nonzero insertion meets an input
or the output of $\Delta$. A fourth insertion needs three
$\g$-inputs in the ternary product, followed by insertion
of its output into a lifted cochain. That output lies in
$\g$, since $\g\oplus0$ is a subalgebra. The term is therefore
zero. The binary product allows at most two insertions.

Extend $T$ by $\widehat T(x,u)=(Tu,0)$.
Since $\widehat T^2=0$, exponentiating the right adjoint
action in \eqref{eq:derived} gives
\begin{align}
 \Delta^T_{\rm I}(X,Y)
 &=(\id-\widehat T)
       [(\id+\widehat T)X,(\id+\widehat T)Y]_E,\nonumber\\
 \Delta^T_{\rm II}(X,Y,Z)
 &=(\id-\widehat T)
       \tri{(\id+\widehat T)X}{(\id+\widehat T)Y}
                                   {(\id+\widehat T)Z}_E.
 \label{eq:transport}
\end{align}
Thus the left side of \eqref{eq:MC} is $P\Delta^T$.
Its binary component on $u,v$ is
\[
 [Tu,Tv]-\kappa[Su,Sv]
                  -T(\rho(Tu)v-\rho(Tv)u),
\]
and its ternary component is
\begin{align*}
 &\tri{Tu}{Tv}{Tw}
   -\kappa(\tri{Tu}{Sv}{Sw}+\tri{Su}{Tv}{Sw}
                                      +\tri{Su}{Sv}{Tw})\\
 &\qquad-T\bigl(\mu(Tv,Tw)u-\mu(Tu,Tw)v+D(Tu,Tv)w
                                      -\kappa c(u,v,w)\bigr).
\end{align*}
These vanish exactly when the two operator equations hold.
\end{proof}

For degree-zero maps $f,g,h:V\to\g$, the brackets are
\begin{align}
 (\ell_1(f))_{\rm I}(u,v)&=0,\nonumber\\
 (\ell_1(f))_{\rm II}(u,v,w)
 &=\kappa f(c(u,v,w))
   -\kappa\bigl(\tri{fu}{Sv}{Sw}+\tri{Su}{fv}{Sw}
                                     +\tri{Su}{Sv}{fw}\bigr),
 \label{eq:l1}\\
 (\ell_2(f,g))_{\rm I}(u,v)
 &=[fu,gv]+[gu,fv]\nonumber\\
 &\quad-f(\rho(gu)v-\rho(gv)u)
           -g(\rho(fu)v-\rho(fv)u),\nonumber\\
 (\ell_2(f,g))_{\rm II}(u,v,w)&=0,\label{eq:l2}\\
 (\ell_3(f,g,h))_{\rm I}(u,v)&=0,\nonumber\\
 (\ell_3(f,g,h))_{\rm II}(u,v,w)
 &=\sum_{\sigma\in S_3}\Bigl(
       \tri{f_{\sigma(1)}u}{f_{\sigma(2)}v}{f_{\sigma(3)}w}
       \nonumber\\
 &\quad-f_{\sigma(1)}
       \bigl(\mu(f_{\sigma(2)}v,f_{\sigma(3)}w)u
              -\mu(f_{\sigma(2)}u,f_{\sigma(3)}w)v\nonumber\\
 &\hspace{9em}+D(f_{\sigma(2)}u,f_{\sigma(3)}v)w\bigr)\Bigr),
 \label{eq:l3}
\end{align}
where $(f_1,f_2,f_3)=(f,g,h)$. At nonzero mass, the constant
binary and linear ternary terms in \eqref{eq:MC} need not
vanish separately.

\begin{corollary}\label{cor:twist}
Fix an extended $\mathcal O$-operator $T$. The brackets
\begin{align}
 \ell_1^T(f)&=\ell_1(f)+\ell_2(T,f)+\tfrac12\ell_3(T,T,f),
 \nonumber\\
 \ell_2^T(f,g)&=\ell_2(f,g)+\ell_3(T,f,g),\qquad
 \ell_3^T(f,g,h)=\ell_3(f,g,h)\label{eq:twist}
\end{align}
give an uncurved $L_\infty[1]$-algebra.
The map $T+f$ is an extended $\mathcal O$-operator exactly when
\begin{equation}\label{eq:twistedMC}
 \ell_1^T(f)+\tfrac12\ell_2^T(f,f)
                            +\tfrac16\ell_3^T(f,f,f)=0.
\end{equation}
In particular, $(\ell_1^T)^2=0$.
\end{corollary}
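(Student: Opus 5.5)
The plan is to use the standard twisting procedure for curved $L_\infty[1]$-algebras whose brackets vanish above a fixed arity, here $n\geq4$. For a degree-zero element $T$, define $\ell_n^T(f_1,\ldots,f_n)=\sum_{k\geq0}\frac1{k!}\ell_{n+k}(T,\ldots,T,f_1,\ldots,f_n)$ for $n\geq0$. Because $\ell_m=0$ for $m\geq4$, these sums are finite, and the algebraic identity below needs no convergence assumption. With this definition, $\ell_1^T,\ell_2^T,\ell_3^T$ are exactly the formulas in \eqref{eq:twist}, and $\ell_n^T=0$ for $n\geq4$. The new curvature is $\ell_0^T=\ell_0+\ell_1(T)+\frac12\ell_2(T,T)+\frac16\ell_3(T,T,T)$. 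By Theorem~\ref{thm:curved}, this vanishes precisely because $T$ is an extended $\mathcal O$-operator. So the twisted algebra is uncurved.

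To prove that the twisted brackets satisfy the $L_\infty[1]$ relations, I would avoid checking the identities term by term. I would work at the level of the derived-bracket construction of Theorem~\ref{thm:curved}. Since $\mathfrak a$ is abelian and $\widehat T^2=0$ (composition of lifts vanishes), we have $e^{[\cdot,\widehat T]}\Delta=\Delta^T$, which is the transported structure \eqref{eq:transport}. Conjugation by the automorphism $\id+\widehat T$ of $E$ (in the category of vector spaces) preserves $[\cdot,\cdot]_{\rm LY}$. Hence $[\Delta^T,\Delta^T]_{\rm LY}=0$. Moreover, $\ker P$ and the abelian subalgebra $\mathfrak a$ are unchanged. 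Voronov's theorem \cite{Voronov2005} applied to $\Delta^T$ then gives a curved $L_\infty[1]$-algebra. Its brackets are $P[\cdots[\Delta^T,\widehat f_1],\ldots,\widehat f_n]$. Expanding $\Delta^T=\sum_k\frac1{k!}[\cdots[\Delta,\widehat T],\ldots,\widehat T]$ and using graded symmetry of the derived brackets, these become $\ell_n^T$. Its curvature is $P\Delta^T=\ell_0^T=0$.

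The Maurer--Cartan statement then follows by direct expansion. Substitute $T+f$ into \eqref{eq:MC} and expand $\ell_n$ multilinearly using graded symmetry; in degree zero, these brackets are symmetric. Collecting the terms by their number of $f$'s gives
\[
\ell_0^T+\ell_1^T(f)+\tfrac12\ell_2^T(f,f)+\tfrac16\ell_3^T(f,f,f).
\]
The binomial coefficients match: for example, $\frac12\ell_2(T+f,T+f)$ contributes $\ell_2(T,f)$, and $\frac16\ell_3$ contributes $\frac12\ell_3(T,T,f)$ and $\frac12\ell_3(T,f,f)$. Since $\ell_0^T=0$, Theorem~\ref{thm:curved} shows that $T+f$ is an extended $\mathcal O$-operator exactly when \eqref{eq:twistedMC} holds. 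Finally, the unary $L_\infty[1]$ relation for an uncurved algebra reads $\ell_1^T\ell_1^T=0$, because the term $\ell_2^T(\ell_0^T,\cdot)$ vanishes.

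The main obstacle is the middle step: showing that the derived brackets of $\Delta^T$ agree exactly with the twisted brackets, including the factorials and Koszul signs. If that bookkeeping becomes awkward, I would instead quote the general lemma that the twist of a curved $L_\infty[1]$-algebra by a degree-zero element is again a curved $L_\infty[1]$-algebra. That lemma is a formal consequence of the generating-series form of the relations, and is valid here since all the sums are finite. I would then use only that the new curvature equals the Maurer--Cartan expression.
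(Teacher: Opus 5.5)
Your proposal is correct and follows essentially the same route as the paper. Both apply Voronov's derived-bracket construction to the transported structure $\Delta^T$, use $[\Delta^T,\Delta^T]_{\rm LY}=0$ and $P\Delta^T=0$ to obtain an uncurved $L_\infty[1]$-algebra, and derive \eqref{eq:twistedMC} by expanding \eqref{eq:MC} at $T+f$ and grouping terms by the number of copies of $f$. You also spell out steps the paper only asserts: $[\Delta^T,\Delta^T]_{\rm LY}=0$ via naturality of the insertion bracket under the linear automorphism $\id+\widehat T$, and the identification of the derived brackets of $\Delta^T$ with $\sum_k\frac{1}{k!}\ell_{n+k}(T,\ldots,T,f_1,\ldots,f_n)$.
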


\begin{proof}
The lifts of $T$ and $f$ lie in the abelian subspace
$\mathfrak a$. Their right adjoint actions commute.
Thus transport by $T$ followed by $f$ equals transport by
$T+f$. Expanding \eqref{eq:MC} at $T+f$ gives
\begin{align*}
 &\ell_0+\ell_1(T+f)+\tfrac12\ell_2(T+f,T+f)
                         +\tfrac16\ell_3(T+f,T+f,T+f)\\
 &=\ell_0+\ell_1(T)+\tfrac12\ell_2(T,T)
                         +\tfrac16\ell_3(T,T,T)\\
 &\quad+\ell_1(f)+\ell_2(T,f)+\tfrac12\ell_3(T,T,f)
       +\tfrac12\ell_2(f,f)+\tfrac12\ell_3(T,f,f)
       +\tfrac16\ell_3(f,f,f).
\end{align*}
The terms on the first line of the right side sum to zero,
since $T$ is an extended $\mathcal O$-operator.
Grouping the rest by the number of copies of $f$ gives
\eqref{eq:twist} and \eqref{eq:twistedMC}.

The transported structure $\Delta^T$ satisfies
$[\Delta^T,\Delta^T]_{\rm LY}=0$.
Graph closure gives $P\Delta^T=0$.
The derived bracket construction for $\Delta^T$ gives
\eqref{eq:twist} with zero curvature.
Its unary $L_\infty[1]$ identity gives $\ell_1^T\ell_1^T=0$.
\end{proof}

We next find the representation on the quotient by a graph.

\begin{theorem}\label{thm:normal}
For an extended $\mathcal O$-operator $T$, define maps on $\g$ by
\begin{align}
 \rho_T(u)x&=[Tu,x]+T\rho(x)u,\label{eq:normal-rho}\\
 \mu_T(u,v)x
 &=\tri{x}{Tu}{Tv}-\kappa\tri{x}{Su}{Sv}
             +T\bigl(\mu(x,Tv)u-D(x,Tu)v\bigr).
 \label{eq:normal-mu}
\end{align}
These form a representation of $V_T$.
Its associated skew map is
\begin{align}
 D_T(u,v)x
 &=\tri{Tu}{Tv}{x}-\kappa\tri{Su}{Sv}{x}
                 -T\bigl(\mu(Tv,x)u-\mu(Tu,x)v\bigr).
 \label{eq:normal-D}
\end{align}
It is the adjoint action of $\Gr(T)$ on
$E_{\kappa,S}/\Gr(T)$ under the identification
\begin{equation}\label{eq:quotient}
 q_T:E_{\kappa,S}\longrightarrow\g,\qquad q_T(x,u)=x-Tu.
\end{equation}
\end{theorem}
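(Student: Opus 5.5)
The plan is to obtain the representation abstractly, as the quotient of an adjoint action, and only afterwards read off the explicit formulas. By Theorem~\ref{thm:graph}, $H=\Gr(T)$ is a subalgebra of $E=E_{\kappa,S}$, and $j_T:V_T\to H$ is an isomorphism. For any Lie--Yamaguti algebra $E$ with subalgebra $H$, the quotient $E/H$ carries a representation of $H$:
\[
 \rho(h)\bar e=\overline{[h,e]},\qquad \mu(h,k)\bar e=\overline{\tri{e}{h}{k}}.
\]
This is well defined precisely because $H$ is closed under both products. The five identities \eqref{eq:rep1}--\eqref{eq:rep5} are the Lie--Yamaguti identities \eqref{eq:LY2}--\eqref{eq:LY4} of $E$ with two or more arguments in $H$, reduced modulo $H$. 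The definition \eqref{eq:D} of $D$ then gives $D(h,k)\bar e=\overline{\tri{h}{k}{e}}$, using \eqref{eq:LY1} exactly as in the remark after \eqref{eq:ad}. Equivalently, one can check the semidirect product on $H\oplus E/H$ by descending the quotient algebra structure. Either way, this first step relies only on Theorem~\ref{thm:double}.

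Next I identify $E/H$ with $\g$ via $q_T$ in \eqref{eq:quotient}. The map $q_T$ is surjective, and its kernel is exactly $\{(Tu,u)\}=H$, so $q_T$ induces a linear isomorphism $E/H\cong\g$. A convenient section is $x\mapsto(x,0)$. Transporting along $j_T$ and $q_T$, I define
\begin{align*}
 \rho_T(u)x&=q_T[(Tu,u),(x,0)]_{\kappa,S},\\
 \mu_T(u,v)x&=q_T\tri{(x,0)}{(Tu,u)}{(Tv,v)}_{\kappa,S},\\
 D_T(u,v)x&=q_T\tri{(Tu,u)}{(Tv,v)}{(x,0)}_{\kappa,S}.
\end{align*}
These then automatically form a representation of $V_T$, and $D_T$ is its associated skew map.

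The final step computes these three expressions directly from \eqref{eq:double2}--\eqref{eq:double3}. For the binary bracket, $[(Tu,u),(x,0)]=([Tu,x],-\rho(x)u)$, because the $S$-correction needs two $V$-inputs. Applying $q_T$ gives $[Tu,x]+T\rho(x)u$, which is \eqref{eq:normal-rho}. For $\mu_T$, the inputs $u,v$ sit in slots two and three. The first component is $\tri{x}{Tu}{Tv}-\kappa\tri{x}{Su}{Sv}$, since only the term with $V$-inputs in those two slots survives. The second component is $\mu(Tu,Tv)\cdot0-\mu(x,Tv)u+D(x,Tu)v$, and $-T$ of it gives \eqref{eq:normal-mu}. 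For $D_T$, the first component is $\tri{Tu}{Tv}{x}-\kappa\tri{Su}{Sv}{x}$. The second component is $\mu(Tv,x)u-\mu(Tu,x)v+D(Tu,Tv)\cdot0$, which yields \eqref{eq:normal-D}.

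The main obstacle is consistency of $D_T$. The statement asserts that $D_T$, defined by \eqref{eq:D} from $\rho_T$, $\mu_T$ and the bracket $[\cdot,\cdot]_T$, equals the adjoint formula \eqref{eq:normal-D}. In the abstract quotient this is the \eqref{eq:LY1} argument applied to $E$ with two inputs in $H$. Because $q_T$ is linear and intertwines the quotient actions, no separate computation with the operator equations is needed. I would state this descent carefully, since a naive direct check would require invoking \eqref{eq:operator2}--\eqref{eq:operator3} and balance repeatedly.
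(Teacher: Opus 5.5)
Your proposal is correct and is essentially the paper's proof. You restrict the adjoint action of $E_{\kappa,S}$ to the subalgebra $\Gr(T)$, pass to the quotient by the submodule $\Gr(T)$, transport along $j_T$ and $q_T$ using representatives $(x,0)$, and obtain \eqref{eq:normal-D} from \eqref{eq:LY1}; your three component computations agree with the paper's. Your parenthetical alternative should be read as quotienting $\Gr(T)\ltimes E_{\kappa,S}$ by the ideal $0\oplus\Gr(T)$, since $\Gr(T)$ itself need not be an ideal of $E_{\kappa,S}$.
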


\begin{proof}
Restrict the adjoint representation of $E$ to $\Gr(T)$ acting
on $E$. Closure makes $\Gr(T)$ a submodule. Thus changing a
representative by an element of $\Gr(T)$ changes the output
by an element of $\Gr(T)$. The quotient $E/\Gr(T)$ inherits
all five representation identities from $E$.
For representatives $(x,0)$,
\begin{align*}
 q_T[j_T(u),(x,0)]_E
   &=[Tu,x]+T\rho(x)u,\\
 q_T\tri{(x,0)}{j_T(u)}{j_T(v)}_E
   &=\tri{x}{Tu}{Tv}-\kappa\tri{x}{Su}{Sv}
       +T\bigl(\mu(x,Tv)u-D(x,Tu)v\bigr),\\
 q_T\tri{j_T(u)}{j_T(v)}{(x,0)}_E
   &=\tri{Tu}{Tv}{x}-\kappa\tri{Su}{Sv}{x}
       -T\bigl(\mu(Tv,x)u-\mu(Tu,x)v\bigr).
\end{align*}
These give the three formulas. The first Lie--Yamaguti identity
on the quotient gives
$D_T(u,v)=\mu_T(v,u)-\mu_T(u,v)
+[\rho_T(u),\rho_T(v)]-\rho_T([u,v]_T)$.
The subalgebra $\Gr(T)$ need not be an ideal of $E$.
\end{proof}

First take the paired complex from the insertion bracket.
We add the low-degree cyclic maps below. For $n\geq1$, set
\begin{equation}\label{eq:cochains}
 C_T^1=\Hom(V,\g),\qquad
 C_T^{n+1}=
 \Hom((\wedge^2V)^{\otimes n},\g)
 \oplus\Hom((\wedge^2V)^{\otimes n}\otimes V,\g).
\end{equation}
For $f\in C_T^1$, define
\begin{align}
 (\delta_T f)_{\rm I}(u,v)
 &=\rho_T(u)f(v)-\rho_T(v)f(u)-f([u,v]_T),\label{eq:delta1I}\\
 (\delta_T f)_{\rm II}(u,v,w)
 &=D_T(u,v)f(w)+\mu_T(v,w)f(u)-\mu_T(u,w)f(v)
                                      -f(\Tri{u}{v}{w}).
 \label{eq:delta1II}
\end{align}
For $X_i=u_i\wedge v_i$ write
\[
 X_i\circ_T X_j=\Tri{u_i}{v_i}{u_j}\wedge v_j
                         +u_j\wedge\Tri{u_i}{v_i}{v_j}.
\]
A hat $\widehat X_k$ means that the input is omitted.
Write $D_T(X_k)=D_T(u_k,v_k)$. For $(f,g)\in C_T^{n+1}$
with $n\geq1$, define the Yamaguti differential by
\begin{align}
 &(\delta_T(f,g))_{\rm I}(X_1,\ldots,X_{n+1})\nonumber\\
 &=(-1)^n\bigl(
       \rho_T(u_{n+1})g(X_1,\ldots,X_n,v_{n+1})
      -\rho_T(v_{n+1})g(X_1,\ldots,X_n,u_{n+1})\nonumber\\
 &\hspace{5em}-g(X_1,\ldots,X_n,[u_{n+1},v_{n+1}]_T)\bigr)
 \nonumber\\
 &\quad+\sum_{k=1}^{n}(-1)^{k+1}D_T(X_k)
       f(X_1,\ldots,\widehat X_k,\ldots,X_{n+1})\nonumber\\
 &\quad+\sum_{1\leq k<j\leq n+1}(-1)^k
       f(X_1,\ldots,\widehat X_k,\ldots,
                        X_k\circ_T X_j,\ldots,X_{n+1}),
 \label{eq:delta-highI}\\
 &(\delta_T(f,g))_{\rm II}(X_1,\ldots,X_{n+1},w)\nonumber\\
 &=(-1)^n\bigl(
    \mu_T(v_{n+1},w)g(X_1,\ldots,X_n,u_{n+1})
    -\mu_T(u_{n+1},w)g(X_1,\ldots,X_n,v_{n+1})\bigr)
 \nonumber\\
 &\quad+\sum_{k=1}^{n+1}(-1)^{k+1}D_T(X_k)
       g(X_1,\ldots,\widehat X_k,\ldots,X_{n+1},w)\nonumber\\
 &\quad+\sum_{1\leq k<j\leq n+1}(-1)^k
       g(X_1,\ldots,\widehat X_k,\ldots,
                        X_k\circ_T X_j,\ldots,X_{n+1},w)
 \nonumber\\
 &\quad+\sum_{k=1}^{n+1}(-1)^k
       g(X_1,\ldots,\widehat X_k,\ldots,X_{n+1},
                                             \Tri{u_k}{v_k}{w}).
 \label{eq:delta-highII}
\end{align}
In the third sums, $X_k\circ_T X_j$ replaces $X_j$
after $X_k$ is omitted.

\begin{theorem}\label{thm:differential}
For $F\in C_T^{n+1}=\mathfrak a^n$,
\begin{equation}\label{eq:sign}
 \ell_1^T(F)=(-1)^n\delta_TF\qquad(n\geq0).
\end{equation}
Hence $\delta_T^2=0$. In degree one, this is the derivative
of the two operator equations with the data fixed.
\end{theorem}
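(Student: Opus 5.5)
We plan to prove \eqref{eq:sign} by identifying $\ell_1^T$ with the derived unary bracket of the transported structure $\Delta^T$ from \eqref{eq:transport}, and then reading off this bracket as the Yamaguti differential of the normal representation. By the proof of Corollary~\ref{cor:twist}, $\ell_1^T(F)=P[\Delta^T,\widehat F]_{\rm LY}$. Set $\Phi_T=\id+\widehat T$. Since $\widehat T^2=0$, $\Phi_T$ is invertible with inverse $\id-\widehat T$, and $\Delta^T=\Phi_T^{-1}\cdot\Delta$ is the pullback of $\Delta$ along $\Phi_T$. Moreover $\Phi_T$ maps $0\oplus V$ isomorphically onto $\Gr(T)$ via $u\mapsto j_T(u)$, and $\Phi_T(x,0)=(x,0)$. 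Thus in the $\Phi_T$-coordinates, the structure $\Delta^T$ on $\g\oplus V$ is the structure of $E$ written in the splitting $E=\Gr(T)\oplus(\g\oplus0)$. Its restriction to $V$ is $V_T$ by Theorem~\ref{thm:graph}. Its mixed components, projected to $\g$ along $\Gr(T)$, are exactly $\rho_T,\mu_T,D_T$ of Theorem~\ref{thm:normal}: projection of the output to $\g$ after transport is $q_T$. The condition $P\Delta^T=0$ says that $V$ is a subalgebra for $\Delta^T$.

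Next we will compute $P[\Delta^T,\widehat F]_{\rm LY}$ for $F=(f,g)\in\mathfrak a^n$ from \eqref{eq:insertI}--\eqref{eq:insertII}, with all inputs in $V$ and the output projected to $\g$. There are two kinds of terms. In $\Delta^T\circ\widehat F$, the output of $\widehat F$, which lies in $\g$, is inserted into one slot of $\pi^T$ or $\omega^T$, and the remaining inputs come from $V$. These terms produce the $\rho_T$, $\mu_T$ and $D_T$ terms of \eqref{eq:delta-highI}--\eqref{eq:delta-highII}, together with the terms $D_T(X_k)f(\ldots)$ and $D_T(X_k)g(\ldots)$. In $\widehat F\circ\Delta^T$, the products $[\cdot,\cdot]_T$ and $\Tri{\cdot}{\cdot}{\cdot}$ are inserted into the inputs of $F$. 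These give the terms $X_k\circ_T X_j$, $\Tri{u_k}{v_k}{w}$ and $g(\ldots,[u_{n+1},v_{n+1}]_T)$. Any insertion whose output lands in a $\g$-slot of $\widehat F$ vanishes, because $\widehat F$ restricts its inputs to $V$. This is the standard identification, as in \cite{ZhaoQiao2023}, of the insertion-bracket differential $[\Delta,\cdot]$ with the Yamaguti coboundary on a module. We will match the shuffle signs $(-1)^{pq}$ and $(-1)^{(k-1)q}$ term by term to obtain the global sign $(-1)^n$. For $n=0$ we will check directly that $\ell_1^T(f)=\ell_1(f)+\ell_2(T,f)+\frac12\ell_3(T,T,f)$, using \eqref{eq:l1}--\eqref{eq:l3}, equals the linearization of \eqref{eq:operator2}--\eqref{eq:operator3} in $T$ along $f$. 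We will then expand $\rho_T$, $\mu_T$ and $D_T$ to confirm that this is $\delta_T f$ in \eqref{eq:delta1I}--\eqref{eq:delta1II}. The $\kappa$-terms in $\mu_T$ and $D_T$ are required to match the $\kappa$-terms of $\ell_1$. This also proves the final sentence of the statement, since $T+tf$ is a solution to first order iff $\ell_1^T(f)=0$ by \eqref{eq:twistedMC}.

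Finally, $\delta_T^2=0$ follows from $(\ell_1^T)^2=0$ in Corollary~\ref{cor:twist}, because the signs $(-1)^n(-1)^{n+1}$ only multiply $\delta_T^2$ by $-1$. We expect the main obstacle to be the sign and shuffle bookkeeping in the second paragraph for general $n$. This includes the asymmetric role of the last $V$-input in component II, and the fact that component I receives contributions from $g$ through $\omega$, namely the $(-1)^n(\rho_T\ldots)$ terms. Here we will first verify $n=1$ completely as a template, and then argue that each family of terms has a uniform sign pattern in $k$ and $j$.
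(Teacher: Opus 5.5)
Your proposal is correct and follows essentially the same route as the paper. You identify $\ell_1^T(F)$ with $P[\Delta^T,\widehat F]_{\rm LY}$, read off $V_T$ and the normal representation from the transported structure through $q_T$, and match the four kinds of insertion terms with \eqref{eq:delta-highI}--\eqref{eq:delta-highII} up to the global sign $(-1)^n$. You then treat degree zero by direct linearization and obtain $\delta_T^2=0$ from $(\ell_1^T)^2=0$. The shuffle-sign bookkeeping you flag as the main obstacle does close up, giving exactly $(-1)^n$ times the Yamaguti coefficients.
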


\begin{proof}
Transport by \eqref{eq:transport} makes $V$ the subalgebra
$V_T$. The quotient action is
\eqref{eq:normal-rho}--\eqref{eq:normal-D}.
The twisted unary bracket is $P[\Delta^T,\widehat F]_{\rm LY}$.
In \eqref{eq:insertI}--\eqref{eq:insertII}, the last pair gives
the first lines of \eqref{eq:delta-highI}--\eqref{eq:delta-highII}.
The output action gives the sums with $D_T$.
Insertion into a wedge pair gives the sums with $X_k\circ_T X_j$.
Insertion into the last input gives the final sum in
\eqref{eq:delta-highII}. The signs are $(-1)^n$ times the
Yamaguti signs. This proves \eqref{eq:sign} for $n\geq1$.

For $f\in C_T^1$, the binary component is
\begin{align*}
 [fu,Tv]+[Tu,fv]
 -f(\rho(Tu)v-\rho(Tv)u)-T(\rho(fu)v-\rho(fv)u).
\end{align*}
The ternary component is
\begin{align}
 &\tri{fu}{Tv}{Tw}+\tri{Tu}{fv}{Tw}+\tri{Tu}{Tv}{fw}
       -f(\Tri{u}{v}{w})\nonumber\\
 &\quad-\kappa\bigl(\tri{fu}{Sv}{Sw}
                     +\tri{Su}{fv}{Sw}+\tri{Su}{Sv}{fw}\bigr)
 \nonumber\\
 &\quad-T\bigl(\mu(fv,Tw)u+\mu(Tv,fw)u
                -\mu(fu,Tw)v-\mu(Tu,fw)v\nonumber\\
 &\hspace{9em}+D(fu,Tv)w+D(Tu,fv)w\bigr).
 \label{eq:linearization}
\end{align}
Substituting the normal representation in
\eqref{eq:delta1I}--\eqref{eq:delta1II} gives these formulas.
They are the coefficients of $t$ in the equations for $T+tf$.
This proves the graded degree-zero case.
Now \eqref{eq:sign} and $(\ell_1^T)^2=0$ give $\delta_T^2=0$.
\end{proof}

Let $\mathfrak s$ be the Lie algebra of pairs
$(A,B)\in\Der(\g)\oplus\End(V)$ satisfying
\begin{align}
 AS&=SB,\label{eq:sym1}\\
 B\rho(x)-\rho(x)B&=\rho(Ax),\label{eq:sym2}\\
 B\mu(x,y)-\mu(x,y)B&=\mu(Ax,y)+\mu(x,Ay).\label{eq:sym3}
\end{align}
Here $A$ is a derivation of both operations of $\g$.
Set
\begin{equation}\label{eq:degreezero}
 C_T^0=\mathfrak s,\qquad
 \partial_T(A,B)=AT-TB,\qquad
 \partial_T|_{C_T^{n}}=\delta_T\quad(n\geq1).
\end{equation}

\begin{proposition}\label{prop:cohom}
The maps in \eqref{eq:degreezero} form a cochain complex.
Define
\begin{equation}\label{eq:H}
 H_T^n=\frac{\ker(\partial_T:C_T^n\to C_T^{n+1})}
                 {\im(\partial_T:C_T^{n-1}\to C_T^n)}
 \quad(n\geq1),\qquad H_T^0=\ker\partial_T|_{C_T^0}.
\end{equation}
For $n\geq3$, these are the paired Yamaguti groups of $V_T$
with normal coefficients. The group $H_T^2$ uses the paired
insertion complex. The full $(2,3)$-group also requires
\eqref{eq:delta-starI}--\eqref{eq:delta-starII}.
Degree one takes the quotient by the chosen symmetries of
the fixed data.
\end{proposition}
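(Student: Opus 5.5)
The claim to prove is that $\partial_T\circ\partial_T=0$ in every degree. For $n\geq1$ this is already Theorem~\ref{thm:differential}: there $\ell_1^T(F)=(-1)^n\delta_TF$, and $(\ell_1^T)^2=0$ holds by Corollary~\ref{cor:twist}. Hence $\delta_T^2=0$ on $C_T^n$ for $n\geq1$. The only new composite is therefore
\[
 \delta_T\circ\partial_T:C_T^0\to C_T^2,
\]
and we must show $\delta_T(AT-TB)=0$ for every $(A,B)\in\mathfrak s$. We must also check that $\mathfrak s$ is closed under brackets, so that it is a Lie algebra. This is routine: commutators of derivations are derivations, and \eqref{eq:sym1}--\eqref{eq:sym3} are linear in $(A,B)$ and stable under commutators by the Jacobi identity.

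For the degree-zero composite, use the symmetry action. The exponential $\Phi_t=(e^{tA},e^{tB})$ acts on the data, but it may not converge over $\K$. So we work formally over $\K[t]/(t^2)$, or more simply we differentiate directly. Set $\Psi=(A,B)$ acting on $E=\g\oplus V$ by $(x,u)\mapsto(Ax,Bu)$. From \eqref{eq:sym1}--\eqref{eq:sym3}, $\Psi$ is a derivation of both products of $E_{\kappa,S}$, in the following way.
\begin{itemize}
 \item In the $\g$-component, $A$ is a derivation of $\g$, and the terms $\kappa[Su,Sv]$ are handled by $AS=SB$.
 \item In the $V$-component, $B$ commutes with $\rho$ and $\mu$ up to the $A$-terms. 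By \eqref{eq:D}, $D$ then satisfies $[B,D(x,y)]=D(Ax,y)+D(x,Ay)$.
 \item The terms $c(u,v,w)=\mu(Sv,Sw)u$ transform correctly, again using $AS=SB$.
\end{itemize}
Consequently $e^{t\Psi}$ is an automorphism of $E$ over $\K[t]/(t^2)$, in the sense that it preserves the products modulo $t^2$. It maps $\Gr(T)$ to $\Gr(T_t)$ with $T_t=T+t(AT-TB)+O(t^2)$, since $(1+tA)Tu=T_t(1+tB)u$. Because $\Gr(T)$ is closed, so is its image. Thus $T_t$ satisfies the operator equations \eqref{eq:operator2}--\eqref{eq:operator3} modulo $t^2$.

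By the last statement of Theorem~\ref{thm:differential}, the $t$-coefficient of those equations is $\delta_T(AT-TB)$. Hence $\delta_T\partial_T(A,B)=0$. Equivalently, one can verify \eqref{eq:delta1I}--\eqref{eq:delta1II} directly with $f=AT-TB$. The pieces then cancel as follows: the $AT$-terms cancel against $A$ applied to the operator equations, and the $TB$-terms cancel through the $V_T$-products and \eqref{eq:sym2}--\eqref{eq:sym3}.

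The main obstacle is the derivation check for the ternary $V$-component of \eqref{eq:double3}, specifically the term $D(x,y)w$ and the $\kappa c$ term. We first derive the identity for $D$ from \eqref{eq:D} and \eqref{eq:sym2}--\eqref{eq:sym3}, since $A$ is a derivation of the bracket. The $c$ term then follows from \eqref{eq:sym3} together with $SB=AS$. The identifications stated in the proposition, with the paired Yamaguti groups for $n\geq3$ and with the insertion complex in degree two, are then definitional, since by \eqref{eq:degreezero} $\partial_T=\delta_T$ there.
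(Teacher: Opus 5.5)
Your proof is correct and follows the paper's argument. The paper also reduces everything to $\delta_T(AT-TB)=0$: since $(e^{tA},e^{tB})$ preserves the fixed data, $e^{tA}Te^{-tB}$ is again an extended $\mathcal O$-operator with $t$-coefficient $AT-TB$, and it concludes with Theorem~\ref{thm:differential} and $\delta_T^2=0$. Your version works over $\K[t]/(t^2)$ and makes explicit the check that $(A,B)$ acts as a derivation of $E_{\kappa,S}$, which the paper leaves implicit.
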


\begin{proof}
By \eqref{eq:sym1}--\eqref{eq:sym3}, $(e^{tA},e^{tB})$
preserves $S$ and the representation. Thus $e^{tA}Te^{-tB}$
is an extended $\mathcal O$-operator if $T$ is one.
Its coefficient of $t$ is $AT-TB$.
Theorem~\ref{thm:differential} gives $\delta_T(AT-TB)=0$.
Together with $\delta_T^2=0$, this proves the claim.
\end{proof}

Full Lie--Yamaguti cohomology also needs the cyclic conditions
\eqref{eq:LY1} and \eqref{eq:LY2} in the lowest paired degree.
This matters for deformations \cite{Goswami2025}.
We give the extra maps with the normal coefficients of
Theorem~\ref{thm:normal}.

Write $B(u,v)=[u,v]_T$ and $C(u,v,w)=\Tri{u}{v}{w}$. Set
\[
 W_T=\Hom(\wedge^3V,\g)
       \oplus\Hom(\wedge^3V\otimes V,\g).
\]
The second summand is alternating in its first three inputs,
not in two consecutive pairs. It is an auxiliary cyclic cochain
space. For $(F,G)\in C_T^2$, define $\delta_T^*(F,G)\in W_T$ by
\begin{align}
 (\delta_T^*(F,G))_{\rm I}(u,v,w)
 &=\cyc_{u,v,w}\bigl(-\rho_T(u)F(v,w)
          +F(B(u,v),w)+G(u,v,w)\bigr),
 \label{eq:delta-starI}\\
 (\delta_T^*(F,G))_{\rm II}(u,v,w,z)
 &=\cyc_{u,v,w}\bigl(\mu_T(u,z)F(v,w)
                         +G(B(u,v),w,z)\bigr).
 \label{eq:delta-starII}
\end{align}
Both components use the pair $(F,G)$.
Their common kernel need not split into two cocycle spaces.

Equations \eqref{eq:delta-highI}--\eqref{eq:delta-highII}
give the other two low-degree equations:
\begin{align}
 (\delta_T(F,G))_{\rm I}(u,v,w,z)
 &=-\rho_T(w)G(u,v,z)+\rho_T(z)G(u,v,w)
                  +G(u,v,B(w,z))\nonumber\\
 &\quad+D_T(u,v)F(w,z)
       -F(C(u,v,w),z)-F(w,C(u,v,z)),
 \label{eq:delta-pairI}\\
 (\delta_T(F,G))_{\rm II}(u,v,w,z,a)
 &=-\mu_T(z,a)G(u,v,w)+\mu_T(w,a)G(u,v,z)\nonumber\\
 &\quad+D_T(u,v)G(w,z,a)-D_T(w,z)G(u,v,a)\nonumber\\
 &\quad-G(C(u,v,w),z,a)-G(w,C(u,v,z),a)\nonumber\\
 &\quad-G(w,z,C(u,v,a))+G(u,v,C(w,z,a)).
 \label{eq:delta-pairII}
\end{align}

\begin{proposition}\label{prop:cyclic-complex}
On $C_T^1$, $\delta_T^*\delta_T=0$.
Thus the augmented sequence below is a cochain complex:
\begin{equation}\label{eq:augmented-complex}
 \mathfrak s\xrightarrow{\partial_T}C_T^1
 \xrightarrow{\delta_T}C_T^2
 \xrightarrow{(\delta_T^*,\delta_T)}W_T\oplus C_T^3
 \xrightarrow{(0,\delta_T)}C_T^4
 \xrightarrow{\delta_T}C_T^5\longrightarrow\cdots.
\end{equation}
Denote its cohomology by $\widehat H_T^\bullet$, in degrees
$0,1,2,3,4,\ldots$. Then
\begin{align}
 \widehat H_T^0=H_T^0
   &=\{(A,B)\in\mathfrak s:AT=TB\},\label{eq:H0-explicit}\\
 \widehat H_T^1=H_T^1
   &=\frac{\{f\in\Hom(V,\g):\delta_Tf=0\}}
                 {\{AT-TB:(A,B)\in\mathfrak s\}},
 \label{eq:H1-explicit}\\
 \widehat H_T^2
   &=\frac{\{(F,G):\delta_T(F,G)=0,
                         \ \delta_T^*(F,G)=0\}}
                 {\{\delta_T f:f\in\Hom(V,\g)\}}.
 \label{eq:H2-corrected}
\end{align}
There is a natural injection $\widehat H_T^2\hookrightarrow H_T^2$.
The group $\widehat H_T^2$ is the usual $(2,3)$-cohomology
with normal coefficients and both cyclic conditions.
\end{proposition}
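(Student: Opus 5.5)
The main task is the identity $\delta_T^*\delta_T=0$ on $C_T^1$. The explicit descriptions of the low-degree groups then follow by reading off kernels and images. For $\delta_T^*\delta_T=0$ I would avoid a direct expansion and argue conceptually, using Theorem~\ref{thm:normal}. By that theorem, $(\g;\rho_T,\mu_T)$ is a representation of the Lie--Yamaguti algebra $V_T$. The semidirect product $V_T\ltimes\g$ is therefore a Lie--Yamaguti algebra, with products \eqref{eq:semi2}--\eqref{eq:semi3} built from $(\rho_T,\mu_T,D_T)$. This semidirect product satisfies all four identities, in particular \eqref{eq:LY1} and \eqref{eq:LY2}.

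Given $f\in C_T^1$, I would consider the linear automorphism $\phi_f(u,x)=(u,x+f(u))$ of $V\oplus\g$. I would transport the semidirect products along $\phi_f$ and work to first order, replacing $f$ by $tf$ and using $\K[t]/(t^2)$. The transported structure is again a Lie--Yamaguti algebra. Its component of type $\wedge^2V\to\g$ is $t(\delta_Tf)_{\rm I}$ up to a global sign, and its component $\wedge^2V\otimes V\to\g$ is $t(\delta_Tf)_{\rm II}$. This is the standard fact that a coboundary of $1$-cochains describes an equivalent abelian extension. The sign has to be matched against \eqref{eq:delta1I}--\eqref{eq:delta1II}, so that the differences of the $D_T$, $\mu_T$ and $\rho_T$ terms come out correctly.

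Next I would observe that for a deformation $(F,G)$ of the semidirect product by terms with values in $\g$ and inputs in $V$, the conditions \eqref{eq:LY1} and \eqref{eq:LY2} at first order in $t$, evaluated on three elements of $V$ with output in $\g$, are exactly $(\delta_T^*(F,G))_{\rm I}=0$ and $(\delta_T^*(F,G))_{\rm II}=0$. The rest of the argument depends on this matching. I would verify it directly from \eqref{eq:delta-starI}--\eqref{eq:delta-starII}, using $\rho_T$ for $[\cdot,x]$ with $x\in\g$ and $\mu_T$ for the ternary product with the $\g$-input in the first slot, together with the sign conventions of \eqref{eq:semi2}--\eqref{eq:semi3}. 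Since the transported structure is a genuine Lie--Yamaguti algebra, these identities hold, and so $\delta_T^*\delta_T f=0$.

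As a fallback I would expand $\delta_T^*\delta_Tf$ directly. Most of the terms cancel using \eqref{eq:LY1} and \eqref{eq:LY2} for $V_T$ together with the relation $D_T=\mu_T(v,u)-\mu_T(u,v)+[\rho_T(u),\rho_T(v)]-\rho_T([u,v]_T)$ recorded in the proof of Theorem~\ref{thm:normal}. The second component also needs the representation identity \eqref{eq:rep1} for $\mu_T$. Getting these sign matches right is the main obstacle.

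The composite of any two consecutive maps in \eqref{eq:augmented-complex} then vanishes, for the following reasons.
\begin{itemize}
\item $\delta_T\partial_T=0$ holds by Proposition~\ref{prop:cohom}.
\item The composite $C_T^1\to W_T\oplus C_T^3$ is $(\delta_T^*\delta_T,\delta_T^2)$. It vanishes by the identity just established and by $\delta_T^2=0$ from Theorem~\ref{thm:differential}.
\item The composite $(0,\delta_T)\circ(\delta_T^*,\delta_T)$ equals $\delta_T^2=0$.
\item The higher composites are $\delta_T^2=0$.
\end{itemize}

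The explicit groups are read off directly. $\widehat H_T^0$ is $\ker\partial_T$, which is \eqref{eq:H0-explicit}. $\widehat H_T^1$ coincides with $H_T^1$, since the maps out of and into $C_T^1$ are unchanged; this gives \eqref{eq:H1-explicit}. $\widehat H_T^2$ is the common kernel of $\delta_T$ and $\delta_T^*$ modulo $\im\delta_T$, which is \eqref{eq:H2-corrected}.

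The map $\widehat H_T^2\to H_T^2$ is induced by the inclusion of the cocycle spaces, and the two groups have the same space of coboundaries. It is therefore well defined, and it is injective: a class that vanishes in $H_T^2$ is $\delta_T f$ for some $f$, so it already vanishes in $\widehat H_T^2$. Finally, the identification of $\widehat H_T^2$ with the usual $(2,3)$-cohomology follows by comparing \eqref{eq:delta-starI}--\eqref{eq:delta-pairII} with the standard Yamaguti cocycle conditions for $V_T$ with coefficients in $(\g;\rho_T,\mu_T)$, including both cyclic conditions \cite{Goswami2025}.
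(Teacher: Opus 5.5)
Your proposal is correct and is essentially the paper's argument. The paper takes the semidirect product of $V_T$ with its normal representation, applies the two cyclic identities to $j_t(u)=(u,tf(u))$, and subtracts $tf$ of the first component; this is exactly your transport along the shear $\phi_{tf}$ restricted to $V$, and the remaining steps (vanishing composites, reading off kernels and images, injectivity from the shared boundaries) also match. One small correction: to produce $(\delta_T^*(F,G))_{\rm II}$, the second cyclic identity \eqref{eq:LY2} has to be evaluated on four elements of $V$, not three.
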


\begin{proof}
Let $f:V\to\g$. Take the semidirect product of $V_T$ with
its normal representation. Both products on the second summand
are zero. Set $j_t(u)=(u,tf(u))$, $F=(\delta_Tf)_{\rm I}$
and $G=(\delta_Tf)_{\rm II}$. Substitution gives
\begin{align*}
 [j_t(u),j_t(v)]&=j_t(B(u,v))+(0,tF(u,v)),\\
 \tri{j_t(u)}{j_t(v)}{j_t(w)}
    &=j_t(C(u,v,w))+(0,tG(u,v,w)).
\end{align*}
Apply the first cyclic identity to three such inputs.
Subtract $tf$ of the first component from the second. This gives
\[
 t\cyc_{u,v,w}
  \bigl(F(B(u,v),w)-\rho_T(w)F(u,v)+G(u,v,w)\bigr)=0.
\]
The coefficient of $t$ is $(\delta_T^*(F,G))_{\rm I}=0$.
Do the same for the second cyclic identity on
$j_t(u),j_t(v),j_t(w),j_t(z)$. This gives
\[
 t\cyc_{u,v,w}
  \bigl(G(B(u,v),w,z)+\mu_T(w,z)F(u,v)\bigr)=0.
\]
Cycling the summands gives $(\delta_T^*(F,G))_{\rm II}=0$.
Hence $\delta_T^*\delta_Tf=0$.

The composite from $C_T^1$ to $W_T\oplus C_T^3$ is zero:
its components are $\delta_T^*\delta_T$ and $\delta_T^2$.
The next composite is
$(0,\delta_T)(\delta_T^*,\delta_T)=\delta_T^2=0$.
Later composites vanish by Theorem~\ref{thm:differential}.
The degree-zero composite vanishes by Proposition~\ref{prop:cohom}.
Thus this is a complex.

The kernels and images in degrees zero, one and two give
the stated formulas. A corrected $2$-cocycle that is zero
in $H_T^2$ equals $\delta_Tf$ for some $f\in C_T^1$.
The same $f$ makes it a boundary in $\widehat H_T^2$.
Thus the map to $H_T^2$ is injective.
\end{proof}

Thus $H_T^0$ is the infinitesimal stabilizer of $T$ under
the chosen symmetries. The group $H_T^1$ describes first-order
normal motions of the graph. An augmented $2$-cocycle satisfies
all four equations \eqref{eq:delta-starI}--\eqref{eq:delta-pairII}.
A $2$-coboundary uses the same map $f$ in both
\eqref{eq:delta1I}--\eqref{eq:delta1II}.

\begin{example}\label{ex:missing-cyclic}
Take $V=\K^3$, $\g=\K a$ with zero products, the zero
representation, $S=0$ and $T=0$. Then $\delta_T$ is zero in
every positive degree. Set $F=0$. Let $G$ have only the nonzero
basis values $G(e_1,e_2,e_3)=a$ and $G(e_2,e_1,e_3)=-a$.
The pair $(F,G)$ is a cocycle for the insertion complex, but
\[
 (\delta_T^*(F,G))_{\rm I}(e_1,e_2,e_3)=a\ne0.
\]
Thus it is not a full $(2,3)$-cocycle.
The groups $H_T^2$ and $\widehat H_T^2$ can differ.
\end{example}

\begin{lemma}\label{lem:cyclic-defect}
For any linear $U:V\to\g$, let $B_U,C_U$ be
\eqref{eq:desc2}--\eqref{eq:desc3}. The map $U$ need not solve
the operator equations. Define its graph defects
$\mathcal F_U,\mathcal G_U$ by
\begin{align*}
 [j_Uu,j_Uv]_E
   &=j_U B_U(u,v)+(\mathcal F_U(u,v),0),\\
 \tri{j_Uu}{j_Uv}{j_Uw}_E
   &=j_U C_U(u,v,w)+(\mathcal G_U(u,v,w),0).
\end{align*}
Use \eqref{eq:normal-rho}--\eqref{eq:normal-mu} to define
$\rho_U,\mu_U$, and \eqref{eq:delta-starI}--\eqref{eq:delta-starII}
to define $\delta_U^*$. Even if these maps are not a
representation,
\begin{equation}\label{eq:cyclic-defect}
 \delta_U^*(\mathcal F_U,\mathcal G_U)=0.
\end{equation}
\end{lemma}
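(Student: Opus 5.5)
Follow the argument of Proposition~\ref{prop:cyclic-complex}, but work inside the double $E=E_{\kappa,S}$ instead of a semidirect product. Use the linear isomorphism $\Phi_U:E\to V\oplus\g$, $\Phi_U(x,u)=(u,x-Uu)$, which sends $j_Uu$ to $(u,0)$ and $(x,0)$ to $(0,x)$. Transport both products of $E$ along $\Phi_U$. This is the transport \eqref{eq:transport} by $\widehat U$, which makes sense for any linear $U$, since $\widehat U^2=0$ still holds. Because $E$ satisfies all four Lie--Yamaguti identities by Theorem~\ref{thm:double}, so does the transported structure, whether or not $U$ is an operator. The defining relations of $\mathcal F_U,\mathcal G_U$ say that on two or three inputs from $V\oplus0$, the transported products are $(B_U(u,v),\mathcal F_U(u,v))$ and $(C_U(u,v,w),\mathcal G_U(u,v,w))$.

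Next compute the mixed products with exactly one input $(0,x)$. Since $q_U(x,0)=x$, the calculation in the proof of Theorem~\ref{thm:normal} applies word for word. Its formulas used no operator equation, only the definitions of the products in $E$ and of $q_U$. The transported products are therefore
\[
 [(u,0),(0,x)]\mapsto(\ast,\rho_U(u)x),\qquad
 \{(0,x),(u,0),(v,0)\}\mapsto(\ast,\mu_U(u,v)x),
\]
and similarly with $D_U$ for the last slot. Here the $V$-components $\ast$ come from the $V$-part of $E$-products such as $\rho(x)u$ and $\mu(x,Tv)u$. These components are generally nonzero, and this is the main point to control. Products with two or more inputs in $0\oplus\g$ do not arise in the computation below.

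Now apply the first Lie--Yamaguti identity to $(u,0),(v,0),(w,0)$ and take the $\g$-component. The terms $[[u,v],w]$ expand as $[(B_U(u,v),\mathcal F_U(u,v)),(w,0)]$. Split this into $[(B_U(u,v),0),(w,0)]$, whose $\g$-part is $\mathcal F_U(B_U(u,v),w)$, plus $[(0,\mathcal F_U(u,v)),(w,0)]$, whose $\g$-part is $-\rho_U(w)\mathcal F_U(u,v)$. The ternary term contributes $\mathcal G_U(u,v,w)$. Cycling over $u,v,w$ gives exactly $(\delta_U^*(\mathcal F_U,\mathcal G_U))_{\rm I}=0$. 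The second identity, applied to $(u,0),(v,0),(w,0),(z,0)$, gives $\cyc\bigl(\mathcal G_U(B_U(u,v),w,z)+\mu_U(w,z)\mathcal F_U(u,v)\bigr)=0$ in the same way. Recycling the summands turns this into $(\delta_U^*)_{\rm II}=0$. Both steps only read off $\g$-components of mixed products with a single $\g$-input, so the unknown $V$-components $\ast$ never appear.

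The main obstacle is confirming that the $\g$-component of a single-$\g$-input mixed product is exactly $\rho_U$ or $\mu_U$ applied to that input, with no contribution from the $V$-component. This is the assertion that $q_U$ of the $E$-product reproduces \eqref{eq:normal-rho}--\eqref{eq:normal-mu}. I will recheck the signs directly from \eqref{eq:double2}--\eqref{eq:double3}, in particular the $-\kappa$ terms and the $T$-correction $-U(\text{$V$-part})$. I will also check the sign convention for the cyclic sum in \eqref{eq:delta-starI}, that is, $-\rho_U(u)F(v,w)$ cycled versus $-\rho_U(w)F(u,v)$, which agree after reindexing.
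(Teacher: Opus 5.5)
Your proposal is correct and is essentially the paper's argument. Transporting along $\Phi_U=(\pr_V,q_U)$ and reading off $\g$-components is the same as applying $q_U(x,u)=x-Uu$ to the first two Lie--Yamaguti identities of $E_{\kappa,S}$ on graph elements; it uses $q_Uj_U=0$, $q_U[j_Uw,(x,0)]_E=\rho_U(w)x$ and $q_U\tri{(x,0)}{j_Uw}{j_Uz}_E=\mu_U(w,z)x$, which hold for arbitrary $U$, and the sign checks you flag come out as you expect.
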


\begin{proof}
Apply $q_U(x,u)=x-Uu$ to
$\cyc([[j_Uu,j_Uv]_E,j_Uw]_E+
\tri{j_Uu}{j_Uv}{j_Uw}_E)=0$. This gives
\[
 \cyc\bigl(\mathcal F_U(B_U(u,v),w)
             -\rho_U(w)\mathcal F_U(u,v)
             +\mathcal G_U(u,v,w)\bigr)=0,
\]
which is the first component. Applying $q_U$ to
$\cyc\tri{[j_Uu,j_Uv]_E}{j_Uw}{j_Uz}_E=0$
gives the second, since
$q_U\tri{(x,0)}{j_Uw}{j_Uz}_E=\mu_U(w,z)x$.
\end{proof}

The cyclic maps are compatible with this operator problem.
The double satisfies all four Lie--Yamaguti identities, and
its graph defects always satisfy \eqref{eq:cyclic-defect}.
Thus the Maurer--Cartan description still holds.
The extra maps give a more precise obstruction group;
see Theorem~\ref{thm:obstruction}.

\section{Deformations and obstructions}
\label{sec:deformations}

We study deformations of an extended $\mathcal O$-operator
with the algebra, representation, extension map and mass fixed.
We classify infinitesimals, prove rigidity and extension criteria,
and compare deformations of the induced algebra.

\begin{definition}\label{def:deformation}
A formal deformation of an extended $\mathcal O$-operator $T$ is a series
\begin{equation}\label{eq:formal}
 T_t=\sum_{i\geq0}t^iT_i,\qquad T_0=T,\quad T_i\in\Hom(V,\g),
\end{equation}
satisfying \eqref{eq:operator2}--\eqref{eq:operator3} at each
coefficient. A deformation of order $N$ satisfies them modulo
$t^{N+1}$. An infinitesimal deformation is a solution over
$\K[t]/(t^2)$.
\end{definition}

Let $\mathcal G$ consist of pairs
$(\Phi,\Psi)\in\Aut_{\rm LY}(\g)\times\operatorname{GL}(V)$
such that
\begin{align}
 \Phi S&=S\Psi,\nonumber\\
 \Psi\rho(x)&=\rho(\Phi x)\Psi,\qquad
 \Psi\mu(x,y)=\mu(\Phi x,\Phi y)\Psi.
 \label{eq:group}
\end{align}
Its Lie algebra is $\mathfrak s$ in
\eqref{eq:sym1}--\eqref{eq:sym3}. It acts by
$T\mapsto\Phi T\Psi^{-1}$.
Two formal deformations are equivalent if
\begin{equation}\label{eq:equivalence}
 T'_t=\Phi_tT_t\Psi_t^{-1},
 \qquad(\Phi_t,\Psi_t)\in\mathcal G(\K[[t]]),
 \qquad(\Phi_0,\Psi_0)=(\id,\id).
\end{equation}
Over the dual numbers, this defines infinitesimal equivalence.
Each pair gives a block diagonal automorphism of $E_{\kappa,S}$.
Thus the action preserves graph closure.

\begin{theorem}\label{thm:infinitesimals}
The infinitesimal deformations of $T$ are the $1$-cocycles
$\ker\delta_T$. Their equivalence classes form $H_T^1$.
If $T_t$ is formal and $T_m$ is its first nonzero coefficient
after $T_0$, then $\delta_TT_m=0$.
\end{theorem}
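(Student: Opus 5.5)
The plan is to reduce all three claims to the Maurer--Cartan description of Corollary~\ref{cor:twist} and the identification $\ell_1^T=\delta_T$ on $C_T^1$ from Theorem~\ref{thm:differential}.

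\emph{First claim.} Write an infinitesimal deformation as $T+tf$ over $\K[t]/(t^2)$. By Corollary~\ref{cor:twist}, $T+tf$ is an extended $\mathcal O$-operator exactly when
\[
 \ell_1^T(tf)+\tfrac12\ell_2^T(tf,tf)+\tfrac16\ell_3^T(tf,tf,tf)=0 .
\]
Every bracket is multilinear and $t^2=0$, so only $t\,\ell_1^T(f)=0$ survives. By Theorem~\ref{thm:differential} with $n=0$, this says $\delta_Tf=0$. One can also cite the sentence there that $\delta_T$ on $C_T^1$ is the coefficient of $t$ in the operator equations for $T+tf$, namely \eqref{eq:linearization} together with the binary formula. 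So the infinitesimal deformations are exactly $\ker\delta_T$.

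\emph{Equivalence classes.} Take $(\Phi_t,\Psi_t)\in\mathcal G(\K[t]/(t^2))$ with identity constant term, and write $\Phi_t=\id+tA$, $\Psi_t=\id+tB$.
\begin{itemize}
\item Expanding \eqref{eq:group} to first order shows that $(A,B)$ satisfies \eqref{eq:sym1}--\eqref{eq:sym3}, and that $\Phi_t$ being an automorphism forces $A\in\Der(\g)$. Hence $(A,B)\in\mathfrak s$.
\item Conversely, every $(A,B)\in\mathfrak s$ gives such a pair: the conditions in \eqref{eq:group} are exactly \eqref{eq:sym1}--\eqref{eq:sym3} modulo $t^2$, and $\Psi_t^{-1}=\id-tB$.
\end{itemize}
Then
\[
 \Phi_t(T+tf)\Psi_t^{-1}=T+t\bigl(f+AT-TB\bigr).
\]
So two infinitesimals $f,f'$ are equivalent exactly when $f'-f=\partial_T(A,B)$ for some $(A,B)\in\mathfrak s$. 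Together with \eqref{eq:H1-explicit}, this identifies the set of classes with $H_T^1$.

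\emph{Formal statement.} Let $T_t=T+t^mT_m+O(t^{m+1})$. Apply \eqref{eq:twistedMC} with $f=T_t-T$ and read off the coefficient of $t^m$. Terms with two or more copies of $f$ start at order $t^{2m}$, so the coefficient of $t^m$ is $\ell_1^T(T_m)$. It vanishes, and Theorem~\ref{thm:differential} turns this into $\delta_TT_m=0$.

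The main point needing care is the identification of the infinitesimal group with $\mathfrak s$. One has to check that no symmetry beyond $\mathfrak s$ arises and that every element of $\mathfrak s$ integrates over the dual numbers. Both reduce to the term-by-term comparison of \eqref{eq:group} with \eqref{eq:sym1}--\eqref{eq:sym3} described above. The remaining steps are direct citations.
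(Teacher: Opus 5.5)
Your proposal is correct and follows essentially the same route as the paper. Both proofs linearize at $T+tf$: you go through the twisted Maurer--Cartan equation and $\ell_1^T=\delta_T$ on $C_T^1$, while the paper substitutes directly and cites Theorem~\ref{thm:differential}. Both then compute $(\id+tA)(T+tf)(\id-tB)$ with $(A,B)\in\mathfrak s$ obtained from the linear terms of \eqref{eq:group}, and finish by noting that the nonlinear terms start in order $2m$.
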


\begin{proof}
Substitute $T+tf$ in the operator equations. By
Theorem~\ref{thm:differential}, the coefficient of $t$ is
\eqref{eq:delta1I}--\eqref{eq:delta1II}.
Thus $f$ is infinitesimal exactly when it is a cocycle.
For $\Phi_t=\id+tA$ and $\Psi_t=\id+tB$ modulo $t^2$,
\[
 (\id+tA)(T+tf)(\id-tB)
       =T+t(f+AT-TB)\pmod{t^2}.
\]
The linear terms of \eqref{eq:group} require $(A,B)\in\mathfrak s$.
Thus equivalence changes $f$ by a coboundary, and every
coboundary comes from such a pair. If the coefficients before
$T_m$ vanish, nonlinear terms start in order $2m$.
The coefficient of $t^m$ is then $\delta_TT_m$.
\end{proof}

\begin{proposition}\label{prop:linear}
The polynomial $T+tf$ is an extended $\mathcal O$-operator
for formal $t$ exactly when
\begin{equation}\label{eq:linear-deformation}
 \delta_Tf=0,\qquad
 \ell_2^T(f,f)=0,\qquad \ell_3(f,f,f)=0.
\end{equation}
An infinitesimal deformation need not extend to a linear
or formal deformation.
\end{proposition}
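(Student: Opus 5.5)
The plan is to use the twisted Maurer--Cartan equation of Corollary~\ref{cor:twist}. Since $T$ is an extended $\mathcal O$-operator, the map $T+tf$ is one exactly when
\[
 \ell_1^T(tf)+\tfrac12\ell_2^T(tf,tf)+\tfrac16\ell_3^T(tf,tf,tf)=0
\]
holds identically in $t$. The brackets $\ell_n^T$ are multilinear and $\ell_n=0$ for $n\geq4$ by Theorem~\ref{thm:curved}. So the left side is the polynomial $t\,\ell_1^T(f)+\tfrac{t^2}{2}\ell_2^T(f,f)+\tfrac{t^3}{6}\ell_3(f,f,f)$, using $\ell_3^T=\ell_3$. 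Because $\K$ has characteristic zero, this polynomial vanishes exactly when its three coefficients vanish. By Theorem~\ref{thm:differential} with $n=0$, $\ell_1^T(f)=\delta_Tf$, which gives \eqref{eq:linear-deformation}.

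For the second sentence it is enough to give one example. Here $\delta_Tf=0$ but no formal deformation $T+tf+t^2T_2+\cdots$ exists; in particular $T+tf$ is then not a linear deformation. A simple plan is to take Lie algebra data from Proposition~\ref{prop:Lie}, with $S=0$ and $\kappa=0$, so that only the binary equation matters. Take $\g=V$ abelian and $\rho=0$. Then the binary equation reads $[Tu,Tv]=0$ and imposes nothing. To get an obstruction we need a nonabelian $\g$.

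A cleaner choice is $\g=\mathfrak{sl}_2$, $V=\g$ with the adjoint action, $S=0$ and $T=0$. Then $\delta_0=0$ in degree one, so every $f$ is an infinitesimal deformation. The second-order equation for a formal deformation is $\delta_0T_2=-\tfrac12\ell_2(f,f)$, and the right side is the Rota--Baxter defect of $f$. The only degree-two coboundary is zero, so the equation reduces to $\ell_2(f,f)=0$, that is,
\[
 [fu,fv]=f([fu,v]+[u,fv]).
\]
Taking $f=\id$ gives $[u,v]=2[u,v]$, which is false in $\mathfrak{sl}_2$. Hence $f=\id$ is an infinitesimal deformation that extends to no second-order deformation, and so neither to a formal nor to a linear one.

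The ternary component must be checked as well. With $T=0$ and $S=0$ the ternary degree-one differential also vanishes, so it adds no condition on $f$, and the obstruction above stays nonzero. The main care point is the expansion step: one must confirm that the twisted brackets of Corollary~\ref{cor:twist} really give the full polynomial in $t$ with no hidden higher terms. This is guaranteed by the vanishing of $\ell_n$ for $n\geq4$ and by the abelianness of $\mathfrak a$.
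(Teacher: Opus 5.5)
Your proof is correct and follows the paper's argument: the paper also reads off the coefficients of $t$, $t^2$ and $t^3$ in the twisted Maurer--Cartan equation \eqref{eq:twistedMC}. Your counterexample uses the same mechanism as Example~\ref{ex:obstructed}: $T=0$, a vanishing normal representation, and $f=\id$ with nonzero obstruction $-[u,v]$; the paper uses the two-dimensional nonabelian Lie algebra with $S=\id$ where you use $\mathfrak{sl}_2$ with $S=0$. One small correction: a polynomial in formal $t$ vanishes exactly when its coefficients do, in any characteristic, so characteristic zero is needed only to drop the factors $\tfrac12$ and $\tfrac16$.
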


\begin{proof}
The coefficients of $t,t^2,t^3$ in
\eqref{eq:twistedMC} are, respectively,
$\delta_Tf$, $\frac12\ell_2^T(f,f)$ and
$\frac16\ell_3(f,f,f)$. There are no higher powers.
\end{proof}

\begin{theorem}\label{thm:rigidity}
If $H_T^1=0$, every formal deformation of $T$ is equivalent
to the constant one. Thus $T$ is formally rigid for the fixed
data and \eqref{eq:equivalence}.
\end{theorem}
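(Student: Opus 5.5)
We argue by induction on the order of the first nonzero coefficient, killing one coefficient at a time with an element of $\mathcal G(\K[[t]])$ and composing the gauge transformations formally. Let $T_t=T+\sum_{i\geq1}t^iT_i$ be a formal deformation. Suppose that, after a gauge transformation already performed, $T_1=\cdots=T_{m-1}=0$ and $T_m\neq0$ for some $m\geq1$. By Theorem~\ref{thm:infinitesimals}, $\delta_TT_m=0$. Since $H_T^1=0$, the description \eqref{eq:H1-explicit} gives a pair $(A,B)\in\mathfrak s$ with $T_m=-(AT-TB)$.

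We then apply $(\Phi_t,\Psi_t)=(e^{t^mA},e^{t^mB})$. Because $\mathfrak s$ is the Lie algebra of $\mathcal G$ (the proof of Proposition~\ref{prop:cohom}), these exponentials lie in $\mathcal G(\K[[t]])$. Indeed, $e^{t^mA}$ is an automorphism of both products of $\g$ since $A$ is a derivation. The relation $AS=SB$ exponentiates to $e^{t^mA}S=Se^{t^mB}$. Conditions \eqref{eq:sym2}--\eqref{eq:sym3} exponentiate to \eqref{eq:group} by the standard identity $e^{sB}\rho(x)e^{-sB}=\rho(e^{sA}x)$, obtained by differentiating both sides in $s$. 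Characteristic zero ensures the exponential series make sense, and in $\K[[t]]$ they converge $t$-adically. Each such pair acts by a block-diagonal automorphism of $E_{\kappa,S}$, so $T'_t=\Phi_tT_t\Psi_t^{-1}$ is again a formal deformation. Expanding gives
\[
T'_t=T+t^m(T_m+AT-TB)+O(t^{m+1})=T+O(t^{m+1}).
\]
Its first nonzero coefficient after $T_0$ therefore has order at least $m+1$.

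Iterating produces pairs $(\Phi^{(m)},\Psi^{(m)})$ congruent to the identity modulo $t^m$. The infinite product $\cdots\Phi^{(2)}\Phi^{(1)}$, and the analogous product of the $\Psi^{(m)}$, converge $t$-adically, because each coefficient stabilizes after finitely many steps. The limit lies in $\mathcal G(\K[[t]])$, since the defining relations \eqref{eq:group} are closed coefficientwise. It transforms $T_t$ into a deformation congruent to $T$ modulo every power of $t$, that is, into the constant deformation $T$. This gives the equivalence \eqref{eq:equivalence}.

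The main obstacle is to justify carefully that the iterated gauge action remains inside $\mathcal G(\K[[t]])$ and passes to the limit. This comes down to checking that the exponentials of elements of $t^m\mathfrak s[[t]]$ satisfy \eqref{eq:group}, and that \eqref{eq:group} is preserved under composition and under $t$-adic limits. The cohomological input is only the step $T_m\in\im\partial_T$, which is immediate from $H_T^1=0$ and Theorem~\ref{thm:infinitesimals}. The augmented cyclic conditions play no role in degree one, since $\widehat H_T^1=H_T^1$.
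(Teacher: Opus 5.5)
Your proposal is correct and follows essentially the same argument as the paper: you use Theorem~\ref{thm:infinitesimals} and $H_T^1=0$ to write the first surviving coefficient as $\pm(AT-TB)$ with $(A,B)\in\mathfrak s$, remove it with the gauge pair $(e^{\pm t^mA},e^{\pm t^mB})$, and pass to the $t$-adic limit of the composed transformations. Your sign convention is the opposite of the paper's but equivalent, and your explicit check that the exponentials satisfy \eqref{eq:group} spells out what the paper states in one line.
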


\begin{proof}
Let $t^mT_m$ be the first term to remove.
By Theorem~\ref{thm:infinitesimals}, $T_m$ is a cocycle.
Since $H_T^1=0$, take $(A_m,B_m)\in\mathfrak s$ with
$T_m=A_mT-TB_m$. The transformation
\[
 T_t\longmapsto e^{-t^mA_m}T_t e^{t^mB_m}
\]
removes the term of order $m$ and keeps all earlier terms.
By \eqref{eq:sym1}--\eqref{eq:sym3}, the exponentials preserve
the data. Repeat in increasing order. The products converge
in the $t$-adic topology: the step at $m$ starts in order $m$.
Their limit gives \eqref{eq:equivalence} with $T'_t=T$.
\end{proof}

We next find when a finite-order deformation extends.

\begin{theorem}\label{thm:obstruction}
Let $T^{(N)}_t=T+\sum_{i=1}^Nt^iT_i$ be a deformation
of order $N\geq1$. Define a $2$-cochain
\begin{align}
 \Ob_{N+1}
 &=\frac12\sum_{\substack{i+j=N+1\\1\leq i,j\leq N}}
                   \ell_2^T(T_i,T_j)\nonumber\\
 &\quad+\frac16
       \sum_{\substack{i+j+k=N+1\\1\leq i,j,k\leq N}}
                   \ell_3(T_i,T_j,T_k).
 \label{eq:obstruction}
\end{align}
Then $\delta_T\Ob_{N+1}=0$ and $\delta_T^*\Ob_{N+1}=0$.
The deformation extends to order $N+1$ exactly when
$[\Ob_{N+1}]=0$ in $\widehat H_T^2$, or equivalently when
its image in $H_T^2$ is zero. The new coefficient must satisfy
\begin{equation}\label{eq:extension-equation}
 \delta_TT_{N+1}=-\Ob_{N+1}.
\end{equation}
If $\widehat H_T^2=0$, every infinitesimal extends to a formal
deformation. The stronger condition $H_T^2=0$ also suffices.
\end{theorem}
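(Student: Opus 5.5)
We work in the twisted uncurved $L_\infty[1]$-algebra $(\mathfrak a,\ell_1^T,\ell_2^T,\ell_3^T)$ of Corollary~\ref{cor:twist}. A deformation of order $N$ means that $f_t=\sum_{i=1}^Nt^iT_i$ satisfies \eqref{eq:twistedMC} modulo $t^{N+1}$. We extract coefficients in the twisted Maurer--Cartan expression
\[
 \mathcal M(f)=\ell_1^T(f)+\tfrac12\ell_2^T(f,f)+\tfrac16\ell_3^T(f,f,f).
\]
Replace $f_t$ by $f_t+t^{N+1}T_{N+1}$. The new term enters order $N+1$ only linearly, through $\ell_1^T(T_{N+1})$, because every bracket involving it together with some $T_i$, $i\ge1$, has order at least $N+2$. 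The remaining order-$(N+1)$ terms come from pairs and triples of the $T_i$. By Corollary~\ref{cor:twist}, these are exactly $\Ob_{N+1}$ as written in \eqref{eq:obstruction}, since $\ell_3^T=\ell_3$. Using Theorem~\ref{thm:differential} with $n=0$, namely $\ell_1^T=\delta_T$ on $\mathfrak a^0=C_T^1$, the order-$(N+1)$ equation becomes \eqref{eq:extension-equation}. So the deformation extends exactly when $\Ob_{N+1}\in\im\delta_T$.

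Next we prove $\delta_T\Ob_{N+1}=0$. We use the $L_\infty[1]$ identities for the uncurved twisted structure. The cleanest route is the standard identity: for $\mathcal M$ as above, the $L_\infty$ relations give
\[
 \ell_1^T\mathcal M(f)+\ell_2^T(f,\mathcal M(f))+\tfrac12\ell_3^T(f,f,\mathcal M(f))=0
\]
as formal power series in $t$. This is the Bianchi identity for Maurer--Cartan curvature. Since $\mathcal M(f_t)\equiv t^{N+1}\Ob_{N+1}\pmod{t^{N+2}}$ and $f_t$ has no constant term, the last two terms are $O(t^{N+2})$. Taking the coefficient of $t^{N+1}$ gives $\ell_1^T\Ob_{N+1}=0$. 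On $\mathfrak a^1=C_T^2$, Theorem~\ref{thm:differential} then gives $\delta_T\Ob_{N+1}=0$, up to sign. The main point to verify is the Bianchi identity itself. We obtain it by summing the $L_\infty[1]$ relations for inputs $(f,\dots,f)$ with the factors $1/k!$. This works because $\ell_n^T$ vanishes for $n\ge4$.

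For $\delta_T^*\Ob_{N+1}=0$ we use Lemma~\ref{lem:cyclic-defect} with $U=T+f_t$ over $\K[[t]]/(t^{N+2})$. By the proof of Theorem~\ref{thm:curved}, the graph defects $(\mathcal F_U,\mathcal G_U)$ are $P\Delta^U$, which equals $\mathcal M(f_t)=t^{N+1}\Ob_{N+1}$ modulo $t^{N+2}$. Lemma~\ref{lem:cyclic-defect} gives $\delta_U^*(\mathcal F_U,\mathcal G_U)=0$. Since $\delta_U^*$ is linear in the pair, and its coefficients $\rho_U,\mu_U,B_U$ agree with $\rho_T,\mu_T,B$ modulo $t$, the coefficient of $t^{N+1}$ is $\delta_T^*\Ob_{N+1}=0$.

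Now $\Ob_{N+1}$ is a cocycle of \eqref{eq:augmented-complex}, and the coboundaries in degree two are exactly $\delta_TC_T^1$. Hence extendability is equivalent to $[\Ob_{N+1}]=0$ in $\widehat H_T^2$. By the injection $\widehat H_T^2\hookrightarrow H_T^2$ of Proposition~\ref{prop:cyclic-complex}, this is equivalent to vanishing of its image in $H_T^2$. Finally, if $\widehat H_T^2=0$, or more strongly $H_T^2=0$ (which forces $\widehat H_T^2=0$ by the injection), every class vanishes. Starting from an infinitesimal, that is a $1$-cocycle by Theorem~\ref{thm:infinitesimals}, we extend inductively order by order. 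This produces a formal deformation.
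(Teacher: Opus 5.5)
Your proposal is correct and follows essentially the same route as the paper's proof. The order-$(N+1)$ coefficient of the twisted Maurer--Cartan expression gives \eqref{eq:extension-equation} via $\ell_1^T=\delta_T$ on $C_T^1$; the Bianchi-type $L_\infty[1]$ identity gives $\delta_T\Ob_{N+1}=0$; Lemma~\ref{lem:cyclic-defect} applied to $U=T^{(N)}_t$ gives $\delta_T^*\Ob_{N+1}=0$; and the injection $\widehat H_T^2\hookrightarrow H_T^2$ gives the equivalent criterion. The differences are only in the order of the steps and in your explicit identification of the graph defects with $P\Delta^U$, which the paper leaves implicit.
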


\begin{proof}
Write $f_t=\sum_{i=1}^Nt^iT_i$. Its twisted
Maurer--Cartan expression is
\[
 \ell_1^T(f_t)+\tfrac12\ell_2^T(f_t,f_t)
                         +\tfrac16\ell_3(f_t,f_t,f_t).
\]
All coefficients through order $N$ vanish. The next is
\eqref{eq:obstruction}. The $L_\infty[1]$ identities give
\begin{align*}
 &\ell_1^T\!\left(
   \ell_1^T(f_t)+\tfrac12\ell_2^T(f_t,f_t)
                         +\tfrac16\ell_3(f_t,f_t,f_t)\right)\\
 &+\ell_2^T\!\left(f_t,
   \ell_1^T(f_t)+\tfrac12\ell_2^T(f_t,f_t)
                         +\tfrac16\ell_3(f_t,f_t,f_t)\right)\\
 &+\tfrac12\ell_3\!\left(f_t,f_t,
   \ell_1^T(f_t)+\tfrac12\ell_2^T(f_t,f_t)
                         +\tfrac16\ell_3(f_t,f_t,f_t)\right)=0.
\end{align*}
The second and third lines start in orders $N+2$ and $N+3$.
The coefficient of $t^{N+1}$ gives $\ell_1^T(\Ob_{N+1})=0$,
hence $\delta_T\Ob_{N+1}=0$ by \eqref{eq:sign}.
For the cyclic equations, apply Lemma~\ref{lem:cyclic-defect}
with $U=T_t^{(N)}$. The graph defect starts with
$t^{N+1}\Ob_{N+1}$, and $\delta_U^*=\delta_T^*+O(t)$.
The leading coefficient gives $\delta_T^*\Ob_{N+1}=0$.
Thus the obstruction lies in the numerator of
\eqref{eq:H2-corrected}.

Adding $t^{N+1}T_{N+1}$ changes this coefficient by
$\ell_1^T(T_{N+1})=\delta_TT_{N+1}$.
Thus an extension solves \eqref{eq:extension-equation}.
This is possible exactly when the class in $\widehat H_T^2$
vanishes. The injection $\widehat H_T^2\hookrightarrow H_T^2$
uses the same boundaries, so the other criterion is equivalent.
If $\widehat H_T^2=0$, solve at every order.
This gives a formal series; analytic convergence is not required.
\end{proof}

Explicitly, with $T_0=T$ and all indices at most $N$,
\begin{align}
 &(\Ob_{N+1})_{\rm I}(u,v)\nonumber\\
 &=\sum_{\substack{i+j=N+1\\0\leq i,j\leq N}}
 \left([T_i u,T_j v]-T_i(\rho(T_j u)v-\rho(T_j v)u)\right),
 \label{eq:ob-explicit2}\\
 &(\Ob_{N+1})_{\rm II}(u,v,w)\nonumber\\
 &=\sum_{\substack{i+j+k=N+1\\0\leq i,j,k\leq N}}
 \Bigl(\tri{T_i u}{T_j v}{T_k w}
   -T_i\bigl(\mu(T_j v,T_k w)u-\mu(T_j u,T_k w)v
                                +D(T_j u,T_k v)w\bigr)\Bigr).
 \label{eq:ob-explicit3}
\end{align}
The ternary mass terms are linear in $T_t$.
Their order-$N+1$ terms are part of $\delta_TT_{N+1}$,
so they do not occur in the known obstruction.

We now compare deformations of the operator and its induced algebra.

So far, $T$ varies in a fixed double. Its obstruction has
normal coefficients. If both products of a Lie--Yamaguti
algebra $L$ vary, one must deform all four identities.
We give the low-degree complex and its relation to operators.

Let $C^{(2p,2p+1)}(L,L)$ denote the usual paired cochains,
and put
\[
 W(L,L)=\Hom(\wedge^3L,L)
              \oplus\Hom(\wedge^3L\otimes L,L).
\]
Use the adjoint representation for $\delta$ and $\delta^*$.
Starting in degree zero, the deformation complex is
\begin{equation}\label{eq:algebra-deformation-complex}
 \End(L)\xrightarrow{\delta}C^{(2,3)}(L,L)
 \xrightarrow{(\delta^*,\delta)}W(L,L)\oplus C^{(4,5)}(L,L)
 \xrightarrow{(0,\delta)}C^{(6,7)}(L,L)\longrightarrow\cdots.
\end{equation}
This is the cyclic augmentation of \cite{Goswami2025}, with
explicit symmetries for the auxiliary spaces.
Denote its cohomology by $H_{\rm def}^\bullet(L)$. Then
\begin{align}
 H_{\rm def}^1(L)
  &=\frac{\ker\delta^*\cap\ker\delta\,\big|_{C^{(2,3)}}}
               {\delta\End(L)},\label{eq:algebra-H1}\\
 H_{\rm def}^2(L)
  &=\frac{\{(P,Q;U,V)\in W(L,L)\oplus C^{(4,5)}(L,L):
                                      \delta(U,V)=0\}}
               {\{(\delta^*(F,G),\delta(F,G)):(F,G)\in C^{(2,3)}(L,L)\}}.
 \label{eq:algebra-H2}
\end{align}
Thus $H_{\rm def}^1$ is the usual $H^{(2,3)}$.
The obstruction group $H_{\rm def}^2$ includes the cyclic
components. Superscripts denote complex degrees, not
numbers of cochain inputs.

\begin{proposition}\label{prop:full-algebra-obstruction}
Suppose $B_t=B+\sum_{i=1}^Nt^iF_i$ and
$C_t=C+\sum_{i=1}^Nt^iG_i$ satisfy all four Lie--Yamaguti
identities modulo $t^{N+1}$. In the sums below,
$i,j\geq1$ and $i+j=N+1$:
\begin{align}
 P(x,y,z)
  &=\sum_{i+j=N+1}\cyc_{x,y,z}F_i(F_j(x,y),z),
 \label{eq:alg-ob-P}\\
 Q(x,y,z,w)
  &=\sum_{i+j=N+1}\cyc_{x,y,z}G_i(F_j(x,y),z,w),
 \label{eq:alg-ob-Q}\\
 U(x,y,z,w)
  &=\sum_{i+j=N+1}\bigl(G_i(x,y,F_j(z,w))
            -F_i(G_j(x,y,z),w)\nonumber\\
  &\hspace{11em}-F_i(z,G_j(x,y,w))\bigr),
 \label{eq:alg-ob-U}\\
 V(x,y,z,w,a)
  &=\sum_{i+j=N+1}\bigl(G_i(x,y,G_j(z,w,a))
            -G_i(G_j(x,y,z),w,a)\nonumber\\
  &\hspace{5em}-G_i(z,G_j(x,y,w),a)
            -G_i(z,w,G_j(x,y,a))\bigr).
 \label{eq:alg-ob-V}
\end{align}
Then $(P,Q;U,V)$ gives a class in $H_{\rm def}^2(L)$.
An extension by $(F_{N+1},G_{N+1})$ exists exactly when
this class vanishes. Its equation is
\begin{equation}\label{eq:alg-extension}
 (\delta^*,\delta)(F_{N+1},G_{N+1})=-(P,Q;U,V).
\end{equation}
The first nonzero coefficient is a cocycle.
Later coefficients generally satisfy inhomogeneous equations.
\end{proposition}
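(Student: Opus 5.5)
The plan is to expand the four Lie--Yamaguti identities for $(B_t,C_t)$ in powers of $t$ and read off the coefficient of $t^{N+1}$. Write $\Pi_t=(B_t,C_t)$ and $\Pi=(B,C)$. In each of \eqref{eq:LY1}--\eqref{eq:LY4}, the $t^{N+1}$ coefficient has two parts. The first is the linearization at $\Pi$ applied to $(F_{N+1},G_{N+1})$; for the two cyclic identities this is $\delta^*$, and for \eqref{eq:LY3}--\eqref{eq:LY4} it is $\delta$ with adjoint coefficients. The second part consists of the quadratic terms with $i,j\geq1$ and $i+j=N+1$, and these are exactly $P,Q,U,V$ in \eqref{eq:alg-ob-P}--\eqref{eq:alg-ob-V}. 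Identity \eqref{eq:LY1} is quadratic in $B$ but linear in $C$, so $G$ contributes no quadratic term to $P$, which matches \eqref{eq:alg-ob-P}. Hence extension to order $N+1$ is equivalent to \eqref{eq:alg-extension}. The first nonzero coefficient $(F_m,G_m)$ has no quadratic terms at order $m$, so it satisfies $(\delta^*,\delta)(F_m,G_m)=0$ and is a cocycle in \eqref{eq:algebra-H1}.

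It remains to see that $(P,Q;U,V)$ defines a class in \eqref{eq:algebra-H2}. The only cocycle condition there is $\delta(U,V)=0$ in $C^{(6,7)}$, since the differential out of $W(L,L)$ is $0$. For this I would use the insertion bracket of Section~\ref{sec:cohomology} on $\cC^\bullet(L,L)$. The components of $[\Pi_t,\Pi_t]_{\rm LY}$ are twice the defects of \eqref{eq:LY3}--\eqref{eq:LY4}. The graded Jacobi identity gives $[\Pi_t,[\Pi_t,\Pi_t]_{\rm LY}]_{\rm LY}=0$ identically in $t$.

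Write $\Pi_t=\Pi+f_t$. The bracket $[\Pi_t,\Pi_t]_{\rm LY}$ vanishes through order $N$, and its $t^{N+1}$ coefficient is $2(U,V)+2[\Pi,(F_{N+1},G_{N+1})]_{\rm LY}$. Consider the $t^{N+1}$ coefficient of the Jacobi relation. The terms involving $f_t$ in the outer slot start at order $N+2$, so this coefficient reduces to $[\Pi,(U,V)]_{\rm LY}$ plus $[\Pi,[\Pi,\cdot]_{\rm LY}]_{\rm LY}$ of a cochain. The second term is zero because $[\Pi,\Pi]_{\rm LY}=0$. Thus $[\Pi,(U,V)]_{\rm LY}=0$. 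Identifying $[\Pi,\cdot]_{\rm LY}$ with $\pm\delta$ in adjoint coefficients, as in Theorem~\ref{thm:differential} with $T=0$ and $\kappa=0$, gives $\delta(U,V)=0$.

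Next I would check that the class is well defined. Changing the chosen $(F_{N+1},G_{N+1})$ is not part of the data, so well-definedness here only means that the class depends on $(F_i,G_i)_{i\le N}$. The vanishing criterion then follows from the form of \eqref{eq:algebra-H2}. The class is zero exactly when $(P,Q;U,V)=(\delta^*,\delta)(F,G)$ for some $(F,G)$, and then $(F_{N+1},G_{N+1})=-(F,G)$ solves \eqref{eq:alg-extension}.

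I expect the main obstacle to be the sign and normalization check that the linear $t^{N+1}$ parts of \eqref{eq:LY1}--\eqref{eq:LY2} coincide with $\delta^*$ of \eqref{eq:delta-starI}--\eqref{eq:delta-starII} in adjoint coefficients. For this I would take $T=0$, $V=L$ adjoint and $\kappa=0$, and reuse the computation in the proof of Proposition~\ref{prop:cyclic-complex}. There the first cyclic identity was linearized exactly as $\cyc(F(B(u,v),w)-\rho(w)F(u,v)+G(u,v,w))$, and $\cyc\,\rho(w)F(u,v)=\cyc\,\rho(u)F(v,w)$ matches \eqref{eq:delta-starI}. The linearization of \eqref{eq:LY2} contains the term $\cyc\,C(F(x,y),z,w)=\cyc\,\mu(z,w)F(x,y)$, which matches \eqref{eq:delta-starII} after re-cycling. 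No cocycle condition is imposed on $(P,Q)$ itself, so the cyclic components need no Jacobi-type argument. This asymmetry is built into \eqref{eq:algebra-H2}.
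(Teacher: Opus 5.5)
Your proposal follows the paper's proof: the $t^{N+1}$ coefficients of the four identities give \eqref{eq:alg-extension} with quadratic parts $P,Q,U,V$, and $\delta(U,V)=0$ comes from the $t^{N+1}$ coefficient of $[\Pi_t,[\Pi_t,\Pi_t]_{\rm LY}]_{\rm LY}=0$. There are two harmless slips. First, $\Pi_t$ stops at order $N$, so the $t^{N+1}$ coefficient of $[\Pi_t,\Pi_t]_{\rm LY}$ contains no $F_{N+1}$ term; with the convention \eqref{eq:insertI}--\eqref{eq:insertII} it equals $-2(U,V)$. Second, Theorem~\ref{thm:differential} at $T=0$, $\kappa=0$ has zero normal coefficients, so you should take the identification of $[\Pi,\cdot]_{\rm LY}$ with the adjoint Yamaguti differential from \cite{ZhaoQiao2023}, as the paper implicitly does.
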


\begin{proof}
Substitute $B_t+t^{N+1}F_{N+1}$ and $C_t+t^{N+1}G_{N+1}$
in the first cyclic identity. The coefficient of $t^{N+1}$ is
\begin{align*}
 &\cyc_{x,y,z}\bigl(
     B(F_{N+1}(x,y),z)+F_{N+1}(B(x,y),z)
                         +G_{N+1}(x,y,z)\bigr)+P(x,y,z)\\
 &=(\delta^*(F_{N+1},G_{N+1}))_{\rm I}(x,y,z)+P(x,y,z).
\end{align*}
The linear ternary term belongs to $\delta^*$.
The nonlinear terms give $P$.
The second cyclic identity gives the coefficient
\begin{align*}
 &\cyc_{x,y,z}\bigl(
     C(F_{N+1}(x,y),z,w)+G_{N+1}(B(x,y),z,w)\bigr)
                                                   +Q(x,y,z,w)\\
 &=(\delta^*(F_{N+1},G_{N+1}))_{\rm II}(x,y,z,w)+Q(x,y,z,w).
\end{align*}
In the binary derivation identity, the terms with one new
coefficient are
\begin{align*}
 &G_{N+1}(x,y,B(z,w))+C(x,y,F_{N+1}(z,w))\\
 &\quad-F_{N+1}(C(x,y,z),w)-B(G_{N+1}(x,y,z),w)\\
 &\quad-F_{N+1}(z,C(x,y,w))-B(z,G_{N+1}(x,y,w)),
\end{align*}
which give $(\delta(F_{N+1},G_{N+1}))_{\rm I}$.
The other terms give $U$.
In the ternary derivation identity, the new linear terms are
\begin{align*}
 &G_{N+1}(x,y,C(z,w,a))+C(x,y,G_{N+1}(z,w,a))\\
 &\quad-G_{N+1}(C(x,y,z),w,a)-C(G_{N+1}(x,y,z),w,a)\\
 &\quad-G_{N+1}(z,C(x,y,w),a)-C(z,G_{N+1}(x,y,w),a)\\
 &\quad-G_{N+1}(z,w,C(x,y,a))-C(z,w,G_{N+1}(x,y,a)).
\end{align*}
These give $(\delta(F_{N+1},G_{N+1}))_{\rm II}$.
The other terms give $V$. This proves \eqref{eq:alg-extension}.

We check the cocycle condition. Set $\Delta_t=(B_t,C_t)$.
By \eqref{eq:insertI}--\eqref{eq:insertII}, at order $N+1$,
\[
 [\Delta_t,\Delta_t]_{\rm LY}
       =-2t^{N+1}(U,V)+O(t^{N+2}).
\]
The graded Jacobi identity gives
$[\Delta_t,[\Delta_t,\Delta_t]_{\rm LY}]_{\rm LY}=0$.
Its coefficient of $t^{N+1}$ gives $[\Delta,(U,V)]_{\rm LY}=0$,
hence $\delta(U,V)=0$. The truncated equations make all lower
bracket coefficients zero. Only the first nonzero deformation
coefficient must be a cocycle. Thus $(P,Q;U,V)$ lies in the
kernel of $(0,\delta)$. Equation~\eqref{eq:alg-extension} is
solvable exactly when this cocycle is a boundary in
\eqref{eq:algebra-deformation-complex}.
\end{proof}

\begin{proposition}\label{prop:descent-cohomology}
There is a natural linear map
\begin{equation}\label{eq:descent-map}
 \Theta_T:H_T^1\longrightarrow H_{\rm def}^1(V_T),
       \qquad [f]\longmapsto[(b_f,c_f)],
\end{equation}
where
\begin{align}
 b_f(u,v)&=\rho(fu)v-\rho(fv)u,\label{eq:descent-b}\\
 c_f(u,v,w)&=\mu(fv,Tw)u+\mu(Tv,fw)u
             -\mu(fu,Tw)v-\mu(Tu,fw)v\nonumber\\
 &\quad+D(fu,Tv)w+D(Tu,fv)w.
 \label{eq:descent-c}
\end{align}
The pair $(b_f,c_f)$ satisfies both cyclic and both paired
cocycle conditions.
\end{proposition}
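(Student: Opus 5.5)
The plan is to realize $(b_f,c_f)$ as the first-order derivative of a family of Lie--Yamaguti structures on $V$ and then read off all four cocycle conditions at once. Fix a cocycle $f\in\ker\delta_T$. By Theorem~\ref{thm:infinitesimals}, $T_t=T+tf$ satisfies both operator equations modulo $t^2$. The products \eqref{eq:desc2}--\eqref{eq:desc3} for $T_t$ are
\[
 [u,v]_{T_t}=[u,v]_T+t\,b_f(u,v),\qquad
 \{u,v,w\}_{T_t}=\Tri{u}{v}{w}+t\,c_f(u,v,w)+O(t^2).
\]
The mass term $-\kappa c(u,v,w)$ does not depend on $T$, so it contributes nothing to $c_f$, and the linear terms are exactly \eqref{eq:descent-b}--\eqref{eq:descent-c}.

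The next step is to show that these products satisfy all four Lie--Yamaguti identities modulo $t^2$. I would use the graph picture over $\K[t]/(t^2)$. Theorem~\ref{thm:graph} holds verbatim over any commutative base ring, because its proof uses only substitution and the injectivity of $j_T$. Base-change $E_{\kappa,S}$ to $\K[t]/(t^2)$; the four identities of Theorem~\ref{thm:double} persist because they are multilinear identities. The graph of $T_t$ is closed in the base-changed double, since the operator equations hold mod $t^2$. Consequently $V\otimes\K[t]/(t^2)$ with the $T_t$-products is a Lie--Yamaguti algebra over the dual numbers. Taking the coefficient of $t$ in each of \eqref{eq:LY1}--\eqref{eq:LY4} gives precisely the two cyclic conditions $\delta^*(b_f,c_f)=0$ and the two paired conditions $\delta(b_f,c_f)=0$ for the adjoint representation of $V_T$. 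This matches the linear parts identified in the proof of Proposition~\ref{prop:full-algebra-obstruction}. Therefore $(b_f,c_f)$ lies in the numerator of \eqref{eq:algebra-H1}.

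For well-definedness, let $f=AT-TB$ with $(A,B)\in\mathfrak s$. Then $e^{sA}Te^{-sB}$ is an exact family of operators (Proposition~\ref{prop:cohom}). I would check that $e^{sB}$ is an isomorphism $V_T\to V_{T_s}$ with $T_s=e^{sA}Te^{-sB}$. This follows from \eqref{eq:sym1}--\eqref{eq:sym3} and \eqref{eq:group}: for instance,
\[
 \Psi\rho(Tu)v=\rho(\Phi Tu)\Psi v=\rho(T_s\Psi u)\Psi v,
\]
and similarly for $\mu$, $D$ and $c$, using $\Phi S=S\Psi$. Differentiating at $s=0$ gives
\[
 (b_f,c_f)=-\delta B,
\]
the coboundary of $B\in\End(V)$ in \eqref{eq:algebra-deformation-complex}, up to the sign convention. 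Hence coboundaries map to zero. Linearity of $f\mapsto(b_f,c_f)$ is clear, and naturality under the group $\mathcal G$ follows from the same transport computation.

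The main obstacle is the bookkeeping in the well-definedness step. One must verify that the $D$-term and the $\kappa c$-term transform correctly under $(\Phi,\Psi)$; this uses that $D$ is built from $\rho,\mu$ and that $\Phi$ is an algebra automorphism. One must also match the sign of $\delta$ on $\End(L)$. I would first do the check at the group level, as above, and only then differentiate, rather than verifying the cocycle identities term by term.
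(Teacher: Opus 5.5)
Your proposal is correct and follows the paper's proof. The cocycle conditions come from the coefficient of $t$ in the four identities for the products of $T+tf$ over $\K[t]/(t^2)$, and well-definedness comes from showing that $f=AT-TB$ gives $(b_f,c_f)=-\delta_{\rm ad}B$. The only difference is that you get this identity by differentiating the statement that $e^{sB}\colon V_T\to V_{T_s}$ is an isomorphism, while the paper checks it directly at the infinitesimal level using \eqref{eq:sym2}, \eqref{eq:sym3} and $AS=SB$.
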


\begin{proof}
If $\delta_Tf=0$, Theorem~\ref{thm:differential} makes $T+tf$
an extended $\mathcal O$-operator over $\K[t]/(t^2)$.
Expanding \eqref{eq:desc2}--\eqref{eq:desc3} gives the products
$[u,v]_T+tb_f(u,v)$ and $\Tri{u}{v}{w}+tc_f(u,v,w)$.
All four identities hold modulo $t^2$.
Their linear terms give
$(\delta_{\rm ad}^*,\delta_{\rm ad})(b_f,c_f)=0$.

We check that the map is well defined.
Take $(A,B)\in\mathfrak s$ and $f=AT-TB$.
Since $B\rho(x)-\rho(x)B=\rho(Ax)$,
\begin{align*}
 b_f(u,v)
 &=\rho((AT-TB)u)v-\rho((AT-TB)v)u\\
 &=B[u,v]_T-[Bu,v]_T-[u,Bv]_T.
\end{align*}
The identity for $\mu$, together with $AS=SB$, gives
\[
 Bc(u,v,w)=c(Bu,v,w)+c(u,Bv,w)+c(u,v,Bw).
\]
Apply $B$ to $\mu(Sv,Sw)u$. Move $B$ through $\mu$ and
replace $AS$ by $SB$. The same calculation for the other
terms of $\Tri{u}{v}{w}$ gives
\begin{align*}
 c_f(u,v,w)
  =B\Tri{u}{v}{w}
     -\Tri{Bu}{v}{w}-\Tri{u}{Bv}{w}-\Tri{u}{v}{Bw}.
\end{align*}
These expressions are $-\delta_{\rm ad}B$.
Thus operator coboundaries give algebra coboundaries,
so \eqref{eq:descent-map} is well defined.
Equations \eqref{eq:descent-b}--\eqref{eq:descent-c} give linearity.
The map need not be injective or surjective.
\end{proof}

For a formal operator family, the induced products satisfy
all four obstruction equations at each order.
By Proposition~\ref{prop:descent-cohomology}, checking only
the two derivation identities need not suffice for a proposed
deformation of $V_T$.

The examples below give an obstruction, rigid operators
and a nontrivial formal deformation.

\begin{example}\label{ex:obstructed}
Take the two-dimensional Lie algebra $\g$ with $[e_1,e_2]=e_2$,
$\tri{x}{y}{z}=[[x,y],z]$, its adjoint representation,
$S=\id$ and $\kappa=0$. At $T=0$, the induced products and
normal representation are zero. Thus $\delta_T=0$ in every
positive degree, and $f=\id$ is infinitesimal.
Its second-order obstruction is
\[
 (\Ob_2)_{\rm I}(u,v)
       =[u,v]-([u,v]-[v,u])=-[u,v],
 \qquad(\Ob_2)_{\rm II}=0.
\]
Here $(\Ob_2)_{\rm I}(e_1,e_2)=-e_2\ne0$.
The image of $\delta_T:C_T^1\to C_T^2$ is zero, so the
obstruction class is nonzero. This infinitesimal cannot
extend to order two.
\end{example}

\begin{theorem}\label{thm:scalar}
Use the adjoint representation of a Lie--Yamaguti algebra $\g$.
Set $S=\id$, $\kappa=-c_0^2$ with $c_0\ne0$, and $T=c_0\id$.
Then $T$ is an extended $\mathcal O$-operator, with
\begin{equation}\label{eq:scalar-products}
 [u,v]_T=2c_0[u,v],\qquad
 \Tri{u}{v}{w}=4c_0^2\tri{u}{v}{w}.
\end{equation}
Its normal representation is trivial, and
\begin{equation}\label{eq:scalar-H1}
 H_T^1\cong
 \Hom\!\left(\g/
    \spanof\{[u,v],\tri{u}{v}{w}:u,v,w\in\g\},\g\right).
\end{equation}
If the span is all of $\g$, then $T$ is formally rigid.
\end{theorem}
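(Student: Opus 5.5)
The approach is to compute everything directly from the definitions, using Proposition~\ref{prop:roots} for the operator property, Theorem~\ref{thm:normal} for the normal representation, and Theorem~\ref{thm:rigidity} for rigidity.

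\emph{Step 1: the induced products.} For the adjoint data \eqref{eq:ad} with $S=\id$ we have $\rho(x)u=[x,u]$, $\mu(x,y)u=\tri{u}{x}{y}$, $D(x,y)u=\tri{x}{y}{u}$ and $c(u,v,w)=\tri{u}{v}{w}$. Substituting $T=c_0\id$ into \eqref{eq:desc2} gives $[u,v]_T=c_0[u,v]-c_0[v,u]=2c_0[u,v]$. In \eqref{eq:desc3} the four terms are
\[
 c_0^2\tri{u}{v}{w},\qquad -c_0^2\tri{v}{u}{w},\qquad c_0^2\tri{u}{v}{w},\qquad -\kappa\tri{u}{v}{w}=c_0^2\tri{u}{v}{w}.
\]
Skew-symmetry in the first two arguments turns the second term into $+c_0^2\tri{u}{v}{w}$, so the sum is \eqref{eq:scalar-products}.

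\emph{Step 2: $T$ is an extended $\mathcal O$-operator.} Here $c_0^2=-\kappa$, so Proposition~\ref{prop:roots} applies. It suffices to check that $T+c_0S=2c_0\id$ preserves the products \eqref{eq:scalar-products}. This is immediate:
\[
 [2c_0u,2c_0v]=4c_0^2[u,v]=2c_0[u,v]_T,\qquad
 \tri{2c_0u}{2c_0v}{2c_0w}=8c_0^3\tri{u}{v}{w}=2c_0\Tri{u}{v}{w}.
\]

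\emph{Step 3: the normal representation is trivial.} From \eqref{eq:normal-rho}, $\rho_T(u)x=c_0[u,x]+c_0[x,u]=0$. From \eqref{eq:normal-mu}, $\mu_T(u,v)x$ has three pieces:
\[
 c_0^2\tri{x}{u}{v},\qquad -\kappa\tri{x}{u}{v}=c_0^2\tri{x}{u}{v},\qquad c_0\bigl(c_0\tri{u}{x}{v}-c_0\tri{x}{u}{v}\bigr)=-2c_0^2\tri{x}{u}{v}.
\]
These sum to $0$. Then $D_T=0$ follows from the formula for $D_T$ in the proof of Theorem~\ref{thm:normal}, or directly from \eqref{eq:normal-D} by the same cancellation.

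\emph{Step 4: computing $H_T^1$.} With trivial coefficients, \eqref{eq:delta1I}--\eqref{eq:delta1II} reduce to
\[
 (\delta_Tf)_{\rm I}(u,v)=-2c_0f([u,v]),\qquad (\delta_Tf)_{\rm II}(u,v,w)=-4c_0^2f(\tri{u}{v}{w}).
\]
Since $c_0\neq0$, the $1$-cocycles are exactly the maps $f:\g\to\g$ vanishing on $\spanof\{[u,v],\tri{u}{v}{w}\}$.

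It remains to find the coboundaries in \eqref{eq:H1-explicit}. These have the form $AT-TB=c_0(A-B)$ with $(A,B)\in\mathfrak s$. Condition \eqref{eq:sym1} with $S=\id$ forces $A=B$, so every coboundary is zero. This gives \eqref{eq:scalar-H1}. If the span is all of $\g$, then $H_T^1=0$, and Theorem~\ref{thm:rigidity} gives formal rigidity.

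The only delicate point is the sign bookkeeping in Step 3, specifically the $\mu$ and $D$ conventions inside the $T(\cdots)$ term of \eqref{eq:normal-mu}. I would recheck it against \eqref{eq:ad} and the skew-symmetry of the ternary product in its first two arguments.
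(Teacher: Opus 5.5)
Your proof is correct and follows the paper's argument. You obtain the induced products by substitution, get the operator equations from Proposition~\ref{prop:roots} with $T+c_0S=2c_0\id$, note that the cocycles are the $f$ killed by $\delta_Tf=\bigl(-2c_0f([u,v]),-4c_0^2f(\tri{u}{v}{w})\bigr)$, and show the coboundaries vanish because \eqref{eq:sym1} forces $A=B$, with rigidity then following from Theorem~\ref{thm:rigidity}. The only difference is that you verify $\rho_T=\mu_T=D_T=0$ by direct substitution into \eqref{eq:normal-rho}--\eqref{eq:normal-D}, and your signs there are right. The paper instead reads this off the split model of Proposition~\ref{prop:models}, where $\Gr(T)$ becomes $\g\oplus0$ and acts trivially on the second factor.
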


\begin{proof}
The binary equation is
$c_0^2[u,v]-2c_0^2[u,v]=-c_0^2[u,v]$.
Each of the three adjoint terms in \eqref{eq:desc3} is
$c_0^2\tri{u}{v}{w}$ by \eqref{eq:ad} and skew-symmetry
in the first two inputs. The mass term adds a fourth copy.
The maps $T+c_0\id=2c_0\id$ and $T-c_0\id=0$ preserve
\eqref{eq:scalar-products}. Thus Proposition~\ref{prop:roots}
also proves the operator equations.

In the split model of Proposition~\ref{prop:models}, the graph
is $\g\oplus0$ and the quotient is the second factor.
The first factor acts trivially on the second.
Thus $\rho_T=\mu_T=D_T=0$, and
\[
 \delta_T f=
 \bigl((u,v)\mapsto-2c_0f([u,v]),
       (u,v,w)\mapsto-4c_0^2f(\tri{u}{v}{w})\bigr).
\]
Since $S=\id$, each symmetry pair has $A=B$, so $AT-TB=0$.
All degree-one coboundaries vanish. The kernel above gives
\eqref{eq:scalar-H1}. Theorem~\ref{thm:rigidity} proves rigidity.
\end{proof}

\begin{example}\label{ex:nonLie}
The theorem also applies when the binary product is not a Lie
bracket. Take $\mathfrak{sl}_3(\mathbb R)=\h\oplus\mathfrak m$,
where $\h$ is diagonal and traceless and $\mathfrak m$ is spanned
by the six off-diagonal matrix units. On $\mathfrak m$, set
\[
 [x,y]_{\mathfrak m}=\pr_{\mathfrak m}(xy-yx),
 \qquad
 \tri{x}{y}{z}=[\pr_{\h}(xy-yx),z]_{\mathfrak{sl}_3}.
\]
This reductive decomposition gives a Lie--Yamaguti algebra.
For $x=E_{12}$, $y=E_{21}$ and $z=E_{23}$, the binary Jacobi
sum is $E_{23}$ and the ternary cyclic sum is $-E_{23}$.
Neither is zero. For distinct $i,j,k$,
$[E_{ik},E_{kj}]_{\mathfrak m}=E_{ij}$.
Thus binary products span $\mathfrak m$.
By Theorem~\ref{thm:scalar}, $T=c_0\id_{\mathfrak m}$ is rigid.
\end{example}

\begin{example}\label{ex:family}
Let $\g=\spanof\{e_1,e_2\}$ with $[e_1,e_2]=e_2$
and $\tri{x}{y}{z}=[[x,y],z]$. Use the adjoint
representation, $S=\id$, $\kappa=-1$, and set
\begin{equation}\label{eq:family}
 T_a e_1=a e_1,\qquad T_a e_2=e_2,\qquad a\in\mathbb R.
\end{equation}
For the only independent binary pair,
\[
 [T_ae_1,T_ae_2]-T_a([T_ae_1,e_2]+[e_1,T_ae_2])
       =a e_2-(a+1)e_2=-e_2.
\]
This proves \eqref{eq:operator2}.
Proposition~\ref{prop:Lie} gives the ternary equation.
The induced products are
\begin{equation}\label{eq:family-products}
 [e_1,e_2]_{T_a}=(a+1)e_2,\qquad
 \Tri{e_1}{e_2}{e_1}=-(a+1)^2e_2,\qquad
 \Tri{e_1}{e_2}{e_2}=0.
\end{equation}
At $a=-1$, both induced products vanish.
For $a\ne-1$, both are nonzero.

Replace $a$ by $a+t$. This gives an unobstructed deformation
with infinitesimal $f(e_1)=e_1$, $f(e_2)=0$.
Its class in $H_{T_a}^1$ is nonzero.
Every derivation of the binary bracket has $Ae_1=s e_2$ and
$Ae_2=r e_2$. It is also a derivation of the ternary product.
Since $S=\id$, symmetry pairs have $B=A$. Hence
\[
 (AT_a-T_aA)e_1=(a-1)s e_2,\qquad
 (AT_a-T_aA)e_2=0.
\]
No such coboundary equals $f$.
Thus the formal deformation is nontrivial, with a nonzero
class in $H_{T_a}^1$.
\end{example}

{\bf Declaration of interests}

The author declares no conflict of interest.

{\bf Data availability}

No data were used in this study.

{\bf Funding}

This work received no funding.



\end{document}